\numberwithin{equation}{section}
\providecommand{\U}[1]{\protect\rule{.1in}{.1in}}
\providecommand{\U}[1]{\protect \rule{.1in}{.1in}}
\newtheorem{theorem}{Theorem}[section]
\newtheorem{lemma}[theorem]{Lemma}
\newtheorem{proposition}[theorem]{Proposition}
\newtheorem{remark}[theorem]{Remark}
\newenvironment{proof}[1][Proof]{\noindent \textbf{#1.} }{\  \rule{0.5em}{0.5em}}
\begin{document}
	\title{Backward Stochastic Differential Equations Driven by $G$-Brownian Motion with Double Reflections}
	\author{ Hanwu Li\thanks{Center for Mathematical Economics, Bielefeld University, Bielefeld 33615, Germany. E-mail:
			hanwu.li@uni-bielefeld.de.} \and Yongsheng Song\thanks{RCSDS, Academy of Mathematics and Systems Science, Chinese Academy of Sciences, Beijing 100190, China, and
School of Mathematical Sciences, University of Chinese Academy of Sciences, Beijing 100049, China. E-mail:
yssong@amss.ac.cn.}}
	\date{}
	\maketitle
	\begin{abstract}
	In this paper, we study the reflected  backward stochastic differential equations driven by $G$-Brownian motion with two reflecting obstacles, which means that the solution lies between two prescribed processes. A new kind of \emph{approximate Skorohod condition} is proposed to derive the uniqueness and existence of the solutions. The uniqueness can be proved by a priori estimates and the existence is obtained via a penalization method.
	\end{abstract}
	
	\textbf{Key words}: $G$-expectation, reflected backward SDE, approximate Skorohod condition
	
	\textbf{MSC-classification}: 60H10
	
	\section{Introduction}
	Given a filtered probability space $(\Omega,\mathcal{F},(\mathcal{F}_t)_{t\in[0,T]},P)$, Pardoux and Peng \cite{PP} first introduced the following type of nonlinear backward stochastic differential equations (BSDEs for short):
	\begin{displaymath}
	  Y_t=\xi+\int_t^T f(s,Y_s,Z_s)ds-\int_t^T Z_sdB_s,
	\end{displaymath}
	where the generator $f(\cdot,y,z)$ is progressively measurable and Lipschitz continuous with respect to $(y,z)$, $\xi$ is an $\mathcal{F}_T$-measurable and square integrable terminal value. They proved that there exists a unique pair of progressively measurable processes $(Y,Z)$ satisfying this equation. The BSDE theory attracts a great deal of attention due to its wide applications in mathematical finance, stochastic control and quasilinear partial differential equations (see \cite{EPQ97}, \cite{PP92}, etc).
	
	One of the most important extensions is the reflected BSDE initiated by El Karoui, Kapoudjian, Pardoux, Peng and Quenez \cite{KKPPQ}. In addition to the generator $f$ and the terminal value $\xi$, there is an additional continuous process $S$, called the obstacle, prescribed in this problem. The reflection means that the solution is forced to be above this given process $S$. More precisely, the solution of the reflected BSDE with parameters $(\xi,f,S)$ is a triple of processes $(Y,Z,L)$ such that
	\begin{align*}
		&Y_t=\xi+\int_t^T f(s,Y_s,Z_s)ds+L_T-L_t-\int_t^T Z_sdB_s,\\
		&Y_t\geq S_t, \ t\in[0,T], \textrm{ and } \int_0^T(Y_s-S_s)dL_s=0, P\textrm{-a.s.},
	\end{align*}
	where $L$ is an increasing process to push the solution upwards. Besides, it should behave in a minimal way, which means that $L$ only acts when the solution $Y$ reaches the obstacle $S$. This requirement corresponds to the mathematical expression
	$ \int_0^T(Y_s-S_s)dL_s=0$, called Skorohod condition. The reflected BSDE is a useful tool to study problems of pricing American options, the obstacle problem for quasilinear PDEs as well as the variational inequalities (see \cite{BCFE}, \cite{KKPPQ}, \cite{EPQ}).
	
	Building upon these results, Cvitanic and Karaztas \cite{CK} studied BSDEs with two reflecting obstacles, which means that the solution $Y$ is forced to stay between a lower obstacle $L$ and an upper obstacle $U$. This can be achieved by the combined actions of two increasing processes: one is to push the solution upwards, the other is to push it downwards and both of them act in a minimal way when $Y$ tries to cross the obstacles.  They also established the relation between the solution of the doubly reflected BSDE and the value function of Dynkin game. For more details about this topic, we refer to the papers \cite{CM,DQS,GIOQ,HL,HLM,PX}.
	
	Note that the classical BSDE and reflected BSDE theory can only deal with financial problems under mean uncertainty, not volatility uncertainty, and can give probabilistic interpretations for quasi linear PDEs, not fully nonlinear ones. Motivated by these facts, Peng (\cite {P07a}, \cite {P08a}) systematically established the $G$-expectation theory. A new type of Brownian motion $B$, called $G$-Brownian motion, whose increments are stationary and independent, was constructed. Different from the classical case, the quadratic variation process $\langle B\rangle$ is not deterministic. The basic notions and tools, such as the stochastic integral with respect to $G$-Brownian motion $B$ and $G$-It\^{o}'s formula, were also established.
	
	A few years later, Hu, Ji, Peng and Song \cite{HJPS1} established the well-posedness of BSDEs driven by $G$-Brownian motion ($G$-BSDEs for short) as the following:
	\begin{displaymath}
		Y_t=\xi+\int_t^T f(s,Y_s,Z_s)ds+\int_t^T g(s,Y_s,Z_s)d\langle B\rangle_s-\int_t^T Z_sdB_s-(K_T-K_t),
	\end{displaymath}
	where the generators $f$, $g$ are Lipschitz continuous with respect to $(Y,Z)$. Under conditions similar to the classical case, applying the Galerkin approximation technique and the PDE approach, they proved that there exists a unique solution $(Y,Z,K)$ to this equation, where $K$ is a decreasing $G$-martingale. Besides, in the accompanying paper \cite{HJPS2}, they also obtained the comparison theorem, Girsanov transformation and the nonlinear Feynman-Kac formula.
	
	Li, Peng and Soumana Hima \cite{LPSH} first studied the reflected $G$-BSDE with a lower obstacle. Due to the appearance of the decreasing $G$-martingale, the Skorohod condition was replaced by a martingale condition in order to get the uniqueness of the solutions. The existence was proved by the approximation method via penalization. Li and Peng \cite{lp} also considered the upper obstacle case.  However, in order to pull the solution down below the upper obstacle, one needs to add a decreasing process $L$ in the $G$-BSDE. Hence, the main difficulty is that the process $L-K$ is not monotonic as in the lower obstacle case. Although they did not  obtain the uniqueness, they showed that the solution constructed by a penalization method is a maximal one by a variant comparison theorem.
	
	In this paper, we investigate the doubly reflected BSDE driven by $G$-Brownian motion with two obstacles $(L,U)$. As in the classical case, there should be two increasing processes $A^+, A^-$: one aims to push the solution upward while the other is to pull the solution downward, and both processes behave in a minimal way such that they satisfy the Skorohod condition. Besides, there will also be a decreasing $G$-martingale $K$ as in the $G$-BSDE, which exhibits the uncertainty of the model. Therefore, it is natural to conjecture that a solution to this doubly reflected $G$-BSDE should be  a 5-tuples of processes $(Y, Z, K, A^+, A^-)$ with $L_t\leq Y_t\leq U_t$ satisfying
\begin{align*}
		&Y_t=\xi+\int_t^T f(s,Y_s,Z_s)ds-\int_t^T Z_sdB_s-(K_T-K_t)+(A_T^{+}-A_t^{+})-(A_T^{-}-A_t^{-}),\\
		&\int_0^T (Y_s-L_s)dA^+_s=\int_0^T (U_s-Y_s)dA^-_s=0.
	\end{align*}
However,  the processes $A^+, \  A^-$ and $K$ here are mixed together,  and the above Skorohod condition is not applicable. In this paper, we write $A$ for $A^+-A^--K$ and replace the Skorohod condition by a new kind of  \textsl{Approximate Skorohod Condition}, which turns into the martingale condition when there is only one obstacle.

The uniqueness of the solutions is obtained by a priori estimates requiring some delicate analysis.  In order to prove the existence, we consider the following $G$-BSDEs parameterized by $n=1,2,\cdots$,
	\begin{displaymath}
		Y_t^n=\xi+\int_t^T f(s,Y_s^n,Z^n_s)ds-\int_t^T Z_s^ndB_s-(K_T^n-K_t^n)+(A_T^{n,+}-A_t^{n,+})-(A_T^{n,-}-A_t^{n,-}),
	\end{displaymath}
	where $A_t^{n,+}=\int_0^t n(Y_s^n-L_s)^-ds$, $A_t^{n,-}=\int_0^t n(Y_s^n-U_s)^+ds$.
	
 The objective, similar to the classical case studied by Cvitanic and Karaztas \cite{CK},  is to show that the sequence $(Y^n,Z^n,A^n)$, where $A^n=A^{n,+}-A^{n,-}-K^n$, converges to a triple of processes $(Y,Z,A)$, and that $(Y,Z,A)$ is a  solution to the doubly reflected $G$-BSDE.  To this end,  the dominated convergence theorem and the property of weakly compactness played crucial role in  Cvitanic and Karaztas \cite{CK}.  However,  these tools are not available under the $G$-expectation framework.

 Our proof is divided into two stages.

 \textbf{Stage 1.} We establish the uniform estimates for $Y^n$ under the norm $\|\cdot\|_{S_G^\alpha}$,  and  prove that $(Y^n-U)^+$ and $(Y^n-L)^-$ converge to $0$ under the norm $\|\cdot\|_{S_G^\alpha}$.  These properties hold true under the assumption that the upper and lower obstacles belong to the space $S_G^\beta(0,T)$ and they are separated by some generalized $G$-It\^{o} process (see (A3').  The latter implies that the limit $Y$ (if exists) lies between the upper and lower obstacles.

  \textbf{Stage 2.} We show that the sequences $A^{n,+}_T$, $A^{n,-}_T$, $K^n_T$ (resp. $Z^n$) are uniformly bounded under the norm $\|\cdot\|_{L_G^\alpha}$ (resp. $\|\cdot\|_{H^{\alpha}_G}$). For this purpose, we prove that $(Y^n-U)^+$ converges to $0$ with the explicit rate $\frac{1}{n}$, which requires that the upper obstacle is a generalized $G$-It\^{o} process.

  Based on the above analysis, we obtain the convergence of $(Y^n,Z^n,A^n)$, and consequently the existence of the doubly reflected $G$-BSDE.

Recall that, the $G$-expectation can be represented as the supremum of the linear expectation under the probability $P$ over all $P\in\mathcal{P}$, where $\mathcal{P}$ is a collection of mutually singular martingale measures.  Therefore, the $G$-expectation theory shares many similarities with the quasi-sure analysis by Denis and Martini \cite{DM} and the second order BSDEs by Soner, Touzi and Zhang \cite{STZ13} and Matoussi, Possama\"{i} and Zhou \cite{MPZ}.  Compared with these works, one advantage of the $G$-expectation framework is that the solution to the $G$-BSDEs is a (generalized) $G$-It\^o process,  and that the decomposition of (generalized) $G$-It\^o processes is unique. This amounts to say that the derivatives $\partial_tu$, $\partial_xu$ and the second order derivative $\partial^2_xu$ of a function $u(t,x)$ are all well defined in the $G$-expectation space, which is crucial to give the probabilistic representations for (path dependent) fully nonlinear PDEs.  In other words, the solutions of $G$-BSDEs have strong regularity and can be universally defined in the spaces of the $G$-framework, which enhances the results in \cite{STZ13} and \cite{MPZ}.

The problem considered in this paper is closely related to Matoussi, Piozin and Possama\"{i} \cite{MPP}, which  studied the second order BSDEs with general reflections, but it is formulated in a quite different way.

1) The solution $(Y,Z, A)$ to the doubly reflected $G$-BSDE is defined in the $G$-framework, in which the processes have strong regularity and remarkable properties. As is mentioned above, in the $G$-framework, the unique decomposition of It\^o processes implies that the derivative $\partial^2_xu$ is well defined, which embodies the advantages of the $G$-expectation compared to the linear expectations.

2) In \cite{MPP} and the corrigendum \cite{MPZ'}, the process $V$ (corresponding to the process $A$ in this paper) is defined and characterized by the Skorohod condition individually for each probability $P$ in $\mathcal {P}$. In this paper, the process $A$ and the corresponding approximate Skorohod condition are given universally with respect to all probabilities $P$ in $\mathcal {P}$. 

    This paper is organized as follows. In Section 2, we present some notions and results on $G$-expectation and $G$-BSDEs as preliminaries. In Section 3, we first state the definition of solution to doubly reflected $G$-BSDE and establish some a priori estimates from which we can derive the uniqueness of the solution. We then introduce the penalization method to prove the existence of the solution in Section 4. 

	\section{Preliminaries}
	In this section, we review notations and results in the $G$-expectation framework, which are concerned with the $G$-It\^{o} calculus and BSDE driven by $G$-Brownian motion. For simplicity, we only consider the one-dimensional case. For more details, we refer to the papers  \cite{HJPS1}, \cite{HJPS2},
	\cite{P07a}, \cite{P08a}, \cite{P10}.
	
	Let $\Omega=C_{0}([0,\infty);\mathbb{R})$, the space of
	real-valued continuous functions starting from the origin, be endowed
	with following norm,
	\begin{displaymath}
		\rho(\omega^1,\omega^2):=\sum_{i=1}^\infty 2^{-i}[(\max_{t\in[0,i]}|\omega_t^1-\omega_t^2|)\wedge 1], \textrm{ for } \omega^1,\omega^2\in\Omega.
	\end{displaymath}
	
	Let  $B$ be the canonical
	process on $\Omega$. Set
	\[
	L_{ip} (\Omega):=\{ \varphi(B_{t_{1}},...,B_{t_{n}}):  \ n\in\mathbb {N}, \ t_{1}
	,\cdots, t_{n}\in\lbrack0,\infty), \ \varphi\in C_{b,Lip}(\mathbb{R}^{ n})\},
	\]
	where $C_{b,Lip}(\mathbb{R}^{ n})$ denotes the set of bounded Lipschitz functions on $\mathbb{R}^{n}$.  Let $(\Omega,L_{ip}(\Omega),\hat{\mathbb{E}})$ be the $G$-expectation space, where the function $G:\mathbb{R}\rightarrow\mathbb{R}$ is defined by
	\begin{displaymath}
		G(a):=\frac{1}{2}\hat{\mathbb{E}}[aB_1^2]=\frac{1}{2}(\bar{\sigma}^2a^+-\underline{\sigma}^2a^-).
	\end{displaymath}
 In this paper, we always assume that $G$ is non-degenerate, i.e., $\underline{\sigma}^2 >0$. In fact, the (conditional) $G$-expectation for $\xi\in L_{ip}(\Omega)$ can be calculated as follows. Assume that $\xi$ can be represented as
    \begin{displaymath}
    	\xi=\varphi(B_{{t_1}}, B_{t_2},\cdots,B_{t_n}).
\end{displaymath}
    Then, for $t\in[t_{k-1},t_k)$, $k=1,\cdots,n$,
\begin{displaymath}
	\hat{\mathbb{E}}_{t}[\varphi(B_{{t_1}}, B_{t_2},\cdots,B_{t_n})]=u_k(t, B_t;B_{t_1},\cdots,B_{t_{k-1}}),
\end{displaymath}
where, for any $k=1,\cdots,n$, $u_k(t,x;x_1,\cdots,x_{k-1})$ is a function of $(t,x)$ parameterized by $(x_1,\cdots,x_{k-1})$ such that it solves the following fully nonlinear PDE defined on $[t_{k-1},t_k)\times\mathbb{R}$:
\begin{displaymath}
	\partial_t u_k+G(\partial_x^2 u_k)=0
\end{displaymath}
with terminal conditions
\begin{displaymath}
	u_k(t_k,x;x_1,\cdots,x_{k-1})=u_{k+1}(t_k,x;x_1,\cdots,x_{k-1},x), \ k<n
\end{displaymath}
and $u_n(t_n,x;x_1,\cdots,x_{n-1})=\varphi(x_1,\cdots,x_{n-1},x)$. Hence, the $G$-expectation of $\xi$ is $\hat{\mathbb{E}}_0[\xi]$.
	
	For each $p\geq1$,   the completion of $L_{ip} (\Omega)$ under the norm $\Vert\xi\Vert_{L_{G}^{p}}:=(\hat{\mathbb{E}}[|\xi|^{p}])^{1/p}$ is denoted by $L_{G}^{p}(\Omega)$.  The conditional $G$-expectation $\mathbb{\hat{E}}_{t}[\cdot]$ can be
	extended continuously to the completion $L_{G}^{p}(\Omega)$. The canonical process $B$ is the 1-dimensional $G$-Brownian motion in this space.
		
	For each fixed $T\geq 0$, set $\Omega_T=\{\omega_{\cdot\wedge T}:\omega\in \Omega\}$. We may define $L_{ip}(\Omega_T)$ and $L_G^p(\Omega_T)$ similarly.  Besides, Denis, Hu and Peng \cite{DHP11} proved that the $G$-expectation has the following representation.
	\begin{theorem}[\cite{DHP11}]
		\label{the1.1}  There exists a weakly compact set
		$\mathcal{P}$ of probability
		measures on $(\Omega,\mathcal{B}(\Omega))$, such that
		\[
		\hat{\mathbb{E}}[\xi]=\sup_{P\in\mathcal{P}}E_{P}[\xi] \text{ for all } \xi\in  {L}_{G}^{1}{(\Omega)}.
		\]
		$\mathcal{P}$ is called a set that represents $\hat{\mathbb{E}}$.
	\end{theorem}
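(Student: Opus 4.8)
The plan is to follow Denis, Hu and Peng \cite{DHP11}: I would first exhibit an explicit candidate family $\mathcal{P}$ built from volatility-controlled stochastic integrals, then verify the representation on the dense subspace $L_{ip}(\Omega)$ by linking $\hat{\mathbb{E}}$ to the $G$-heat equation through a stochastic control duality, next extend the identity to all of $L_G^1(\Omega)$ by density, and finally deduce weak compactness from tightness. For the construction, I would work on an auxiliary filtered probability space carrying a classical one-dimensional Brownian motion $W$: for each progressively measurable $\theta$ valued in $[\underline{\sigma},\bar{\sigma}]$ set $B^\theta_t=\int_0^t\theta_s\,dW_s$ and let $P_\theta$ be the law of $B^\theta$ on $(\Omega,\mathcal{B}(\Omega))$. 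I would take $\mathcal{P}_0=\{P_\theta\}$ and let $\mathcal{P}$ be its weak closure; each $P_\theta$ is a genuine probability measure on path space, so $E_{P_\theta}[\cdot]$ is a countably additive linear expectation dominated by $\hat{\mathbb{E}}$.

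To prove the identity on $L_{ip}(\Omega)$, take $\xi=\varphi(B_{t_1},\dots,B_{t_n})$, whose conditional $G$-expectation is given by the functions $u_k$ solving $\partial_t u_k+G(\partial_x^2 u_k)=0$ recalled above. The crucial observation is that $G(a)=\tfrac12(\bar{\sigma}^2 a^+-\underline{\sigma}^2 a^-)=\tfrac12\sup_{\sigma\in[\underline{\sigma},\bar{\sigma}]}\sigma^2 a$, so this PDE is exactly the Hamilton--Jacobi--Bellman equation of the control problem $\sup_\theta E[\varphi(B^\theta_{t_1},\dots,B^\theta_{t_n})]$. I would then run a verification argument, applying It\^{o}'s formula to $u_k(t,B^\theta_t)$ along each admissible $\theta$ and optimizing, to obtain $\hat{\mathbb{E}}[\xi]=\sup_\theta E_{P_\theta}[\xi]=\sup_{P\in\mathcal{P}_0}E_P[\xi]$; since $\xi$ is bounded and continuous, passing to the weak closure leaves the supremum unchanged, giving $\hat{\mathbb{E}}[\xi]=\sup_{P\in\mathcal{P}}E_P[\xi]$.

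For the extension, note both functionals are $\|\cdot\|_{L_G^1}$-continuous: uniformly in $P\in\mathcal{P}$ one has $|E_P[\xi]-E_P[\eta]|\le E_P[|\xi-\eta|]\le\hat{\mathbb{E}}[|\xi-\eta|]$, so the identity extends from the dense set $L_{ip}(\Omega)$ to every $\xi\in L_G^1(\Omega)$. For weak compactness, the bound $\theta\le\bar{\sigma}$ yields uniform Kolmogorov-type moment estimates on the increments of $B^\theta$, hence tightness of $\mathcal{P}_0$ on the Polish space $\Omega$; Prokhorov's theorem then makes the weak closure $\mathcal{P}$ weakly compact.

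The hard part will be the duality identity $\hat{\mathbb{E}}[\xi]=\sup_\theta E_{P_\theta}[\xi]$ on $L_{ip}(\Omega)$, i.e.\ matching the solution of the fully nonlinear $G$-heat equation with the value of the associated stochastic control problem: this demands enough regularity of $u_k$ to justify the It\^{o} expansion in the verification step and a careful two-sided comparison between the PDE and the controlled diffusions. The equivalent purely analytic route---showing that $\hat{\mathbb{E}}$ is continuous from above on the bounded continuous functions so that each abstractly dominated linear functional is countably additive---concentrates the same difficulty in the regularity/tightness estimate, which is the indispensable technical input.
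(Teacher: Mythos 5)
The paper offers no proof of this statement at all --- it is quoted as a preliminary directly from Denis, Hu and Peng \cite{DHP11} --- so the comparison can only be with the original argument there, which your outline reproduces essentially exactly: the strong-formulation family of laws of $\int_0^{\cdot}\theta_s\,dW_s$ with $\theta$ valued in $[\underline{\sigma},\bar{\sigma}]$, the identification with the $G$-heat equation's value function on $L_{ip}(\Omega)$, the density extension under the uniform domination $E_P[\,|\xi-\eta|\,]\leq \hat{\mathbb{E}}[\,|\xi-\eta|\,]$, and tightness plus Prokhorov for weak compactness. The only point worth flagging is that \cite{DHP11} close the duality $\hat{\mathbb{E}}[\xi]=\sup_{\theta}E_{P_\theta}[\xi]$ by comparison/uniqueness for viscosity solutions of the HJB equation rather than a classical It\^{o} verification (which needs $u_k\in C^{1,2}$); you correctly isolate this as the hard step, and in the present paper's setting the nondegeneracy $\underline{\sigma}^2>0$ makes the verification route legitimate via Krylov-type interior regularity for the uniformly parabolic $G$-heat equation.
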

	
	Let $\mathcal{P}$ be a weakly compact set that represents $\hat{\mathbb{E}}$.
	For this $\mathcal{P}$, we define the capacity%
	\[
	c(A):=\sup_{P\in\mathcal{P}}P(A),\ A\in\mathcal{B}(\Omega).
	\]
	A set $A\in\mathcal{B}(\Omega_T)$ is called polar if $c(A)=0$.  A
		property holds $``quasi$-$surely"$ (q.s.) if it holds outside a
		polar set. In the following, we do not distinguish the two random variables $X$ and $Y$ if $X=Y$, q.s.
	
	For $\xi\in L_{ip}(\Omega_T)$, let $\mathcal{E}(\xi)=\hat{\mathbb{E}}[\sup_{t\in[0,T]}\hat{\mathbb{E}}_t[\xi]]$ and  $\mathcal{E}$ is called the $G$-evaluation. For $p\geq 1$ and $\xi\in L_{ip}(\Omega_T)$, define $\|\xi\|_{p,\mathcal{E}}=[\mathcal{E}(|\xi|^p)]^{1/p}$ and denote by $L_{\mathcal{E}}^p(\Omega_T)$ the completion of $L_{ip}(\Omega_T)$ under $\|\cdot\|_{p,\mathcal{E}}$. The following theorem can be regarded as  Doob's maximal inequality under $G$-expectation.
	\begin{theorem}[\cite{S11}]\label{the1.2}
		For any $\alpha\geq 1$ and $\delta>0$, $L_G^{\alpha+\delta}(\Omega_T)\subset L_{\mathcal{E}}^{\alpha}(\Omega_T)$. More precisely, for any $1<\gamma<\beta:=(\alpha+\delta)/\alpha$, $\gamma\leq 2$, we have
		\begin{displaymath}
		\|\xi\|_{\alpha,\mathcal{E}}^{\alpha}\leq \gamma^*\{\|\xi\|_{L_G^{\alpha+\delta}}^{\alpha}+14^{1/\gamma}
		C_{\beta/\gamma}\|\xi\|_{L_G^{\alpha+\delta}}^{(\alpha+\delta)/\gamma}\},\quad \forall \xi\in L_{ip}(\Omega_T),
		\end{displaymath}
		where $C_{\beta/\gamma}=\sum_{i=1}^\infty i^{-\beta/\gamma}$, $\gamma^*=\gamma/(\gamma-1)$.
	\end{theorem}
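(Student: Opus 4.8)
The plan is to prove a weak-type maximal inequality for the $G$-martingale generated by $\xi$ and then upgrade it to the stated strong-type estimate by a real-variable summation that consumes the extra integrability $\delta$. First I would reduce to $\xi\ge 0$, replacing $\xi$ by $|\xi|$, and set $\eta:=|\xi|^{\alpha}\ge 0$ and $M_t:=\hat{\mathbb{E}}_t[\eta]$, so that $\|\xi\|_{\alpha,\mathcal{E}}^{\alpha}=\mathcal{E}(|\xi|^{\alpha})=\hat{\mathbb{E}}[\sup_{t\le T}M_t]=\hat{\mathbb{E}}[M^{*}]$ with $M^{*}:=\sup_{t\le T}M_t$. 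For $\xi\in L_{ip}(\Omega_T)$ the process $M$ has a continuous version (q.s.), so $M^{*}$ is the increasing limit of discrete maxima $\max_i M_{t_i}$ over refining partitions, and it suffices to bound the latter uniformly. Since $\eta^{\beta}=|\xi|^{\alpha+\delta}$, we have $\hat{\mathbb{E}}[\eta^{\beta}]^{1/\beta}=\|\xi\|_{L_G^{\alpha+\delta}}^{\alpha}$, and the goal becomes an $L^1$-maximal bound for $M^{*}$ controlled by $\|\eta\|_{L_G^{\beta}}$ with $\beta=(\alpha+\delta)/\alpha>1$.

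\textbf{Weak-type inequality (the crux).} For $\lambda>0$ set $\tau_\lambda:=\inf\{t:M_t\ge\lambda\}\wedge T$ and $A_\lambda:=\{M^{*}\ge\lambda\}$. By path-continuity $M_{\tau_\lambda}\ge\lambda$ on $A_\lambda$ and $A_\lambda\in\mathcal{F}_{\tau_\lambda}$. Using monotonicity of $\hat{\mathbb{E}}$ together with $\lambda\mathbf{1}_{A_\lambda}\le M_{\tau_\lambda}\mathbf{1}_{A_\lambda}$ and $\hat{\mathbb{E}}[\mathbf{1}_{A_\lambda}]=c(A_\lambda)$, then pulling the $\mathcal{F}_{\tau_\lambda}$-measurable indicator inside $\hat{\mathbb{E}}_{\tau_\lambda}$ (legitimate as $\eta\ge 0$) and applying the optional tower property, I would derive
\[ \lambda\, c(M^{*}\ge\lambda)\le \hat{\mathbb{E}}\big[M_{\tau_\lambda}\mathbf{1}_{A_\lambda}\big]=\hat{\mathbb{E}}\big[\hat{\mathbb{E}}_{\tau_\lambda}[\eta\,\mathbf{1}_{A_\lambda}]\big]=\hat{\mathbb{E}}\big[\eta\,\mathbf{1}_{\{M^{*}\ge\lambda\}}\big]. \]
The reason for routing through the capacity $c$ and $G$-optional sampling, rather than through classical Doob, is that $M_t=\hat{\mathbb{E}}_t[\eta]$ is only a $P$-supermartingale (not a martingale) for each $P\in\mathcal{P}$, so the per-measure Doob inequality is unavailable; the sublinear and tower structure of $\hat{\mathbb{E}}$ is exactly what repairs this.

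\textbf{Upgrade to strong type.} Combining the weak-type bound with H\"older under $\hat{\mathbb{E}}$, namely $\hat{\mathbb{E}}[\eta\mathbf{1}_A]\le \hat{\mathbb{E}}[\eta^{\beta}]^{1/\beta}c(A)^{1-1/\beta}$, and then summing the resulting level estimates over an integer (or dyadic) grid of values of $\lambda$ produces the stated inequality. The series $C_{\beta/\gamma}=\sum_i i^{-\beta/\gamma}$ arises precisely from this summation and converges if and only if $\beta/\gamma>1$, which is the role of the hypothesis $\gamma<\beta$; the auxiliary exponent $\gamma$, with conjugate $\gamma^{*}$, is the interpolation parameter that trades the weak-type information against the $L^{\beta}$-control, while the restriction $\gamma\le 2$ and the explicit factor $14^{1/\gamma}$ come from keeping the constants in this estimate fully explicit.

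\textbf{Main obstacle.} I expect the genuine difficulty to lie entirely in the weak-type step carried out \emph{quasi-surely}: making the hitting time $\tau_\lambda$ and optional sampling rigorous under the capacity $c$ (measurability of $A_\lambda$, and the identity $\hat{\mathbb{E}}[\hat{\mathbb{E}}_{\tau_\lambda}[\cdot]]=\hat{\mathbb{E}}[\cdot]$ for non-deterministic stopping times) is delicate, since general optional sampling can fail in the $G$-framework. The robust route is to establish the estimate first for a fixed finite partition, where the stopping time takes finitely many values and the tower property is elementary, and then to pass to the continuous supremum using continuity of $M$. The price of this discretization is the non-sharp constant $14^{1/\gamma}C_{\beta/\gamma}$ and the strict inequality $\gamma<\beta$ in place of the clean classical constant $\beta/(\beta-1)$; the remaining real-variable summation is routine.
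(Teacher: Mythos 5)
You should first note that the paper offers no proof of Theorem \ref{the1.2}: it is quoted verbatim from Song \cite{S11} as a preliminary, so your proposal has to be measured against the argument in \cite{S11}, and as written it has a genuine gap at exactly the step you call the crux. The localized weak-type inequality $\lambda\, c(M^*\ge\lambda)\le \hat{\mathbb{E}}[\eta \mathbf{1}_{\{M^*\ge\lambda\}}]$ is obtained by operations that do not exist in the $G$-framework used here: the conditional expectation $\hat{\mathbb{E}}_t$ is defined only at deterministic times and only on $L_G^1(\Omega_T)$, and since $A_\lambda$ is merely Borel, $\mathbf{1}_{A_\lambda}$ is in general not quasi-continuous, so $\eta\mathbf{1}_{A_\lambda}\notin L_G^1(\Omega_T)$ and neither $\hat{\mathbb{E}}_{\tau_\lambda}[\eta\mathbf{1}_{A_\lambda}]$ nor the tower identity you invoke is even defined. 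More fundamentally, the inequality cannot be had measure by measure: for each $P\in\mathcal{P}$, $M_t=\hat{\mathbb{E}}_t[\eta]$ is a $P$-supermartingale with $M_\tau\ge E_P[\eta\,|\,\mathcal{F}_\tau]$ $P$-a.s., so $E_P[M_\tau\mathbf{1}_{A_\lambda}]$ \emph{dominates} $E_P[\eta\mathbf{1}_{A_\lambda}]$ --- the comparison points the wrong way. Closing it requires re-optimizing the measure after $\tau$, i.e.\ stability of $\mathcal{P}$ under pasting at stopping times, a substantive ingredient absent from your outline; your fallback to a finite partition does not supply it, since the decomposition blocks $B_i=\{M_{t_1}<\lambda,\dots,M_{t_i}\ge\lambda\}$ still carry non-quasi-continuous indicators, and summing per-block bounds collides with sublinearity ($\sum_i\hat{\mathbb{E}}[\eta\mathbf{1}_{B_i}]\ge\hat{\mathbb{E}}[\eta\mathbf{1}_{A_\lambda}]$, not $\le$).

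Two further points. First, your stated motivation is backwards: the per-measure weak-type maximal inequality \emph{is} available for the nonnegative $P$-supermartingale $M$, namely $\lambda\,P(M^*\ge\lambda)\le E_P[M_0]=\hat{\mathbb{E}}[\eta]$, hence $\lambda\,c(M^*\ge\lambda)\le\hat{\mathbb{E}}[\eta]$; what fails per-measure is precisely the \emph{localized} version your strong-type upgrade needs. Feeding the non-localized bound into your H\"older-plus-summation scheme gives only $c(M^*\ge\lambda)\lesssim\lambda^{-1}$, and the level sum diverges. A repair that stays inside the framework replaces localization by truncation --- $\hat{\mathbb{E}}_t[\eta]\le\mu+\hat{\mathbb{E}}_t[(\eta-\mu)^+]$ together with $(\eta-\mu)^+\le\mu^{1-\beta}\eta^{\beta}$ yields $c(M^*\ge 2\mu)\le\mu^{-\beta}\hat{\mathbb{E}}[\eta^{\beta}]$, modulo verifying the $P$-supermartingale property of $\hat{\mathbb{E}}_t[\eta]$ and q.s.\ path regularity for $\xi\in L_{ip}(\Omega_T)$ --- but that is a different argument from the one you propose, and you never produce the weak-type-$\beta$ decay that summation against $C_{\beta/\gamma}$ requires. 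Second, attributing the restriction $\gamma\le 2$ and the factor $14^{1/\gamma}$ generically to ``keeping constants explicit'' and ``discretization'' is not a proof: in \cite{S11} these constants emerge from a specific sublinear-expectation construction that substitutes for exactly the optional-sampling and indicator manipulations you assume. As it stands, the crux of your proof is asserted rather than proved.
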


    For $T>0$ and $p\geq 1$, the following spaces will be frequently used in this paper.
    \begin{itemize}
    	\item $M_G^0(0,T):=\{\eta: \eta_{t}(\omega)=\sum_{j=0}^{N-1}\xi_{j}(\omega)\textbf{1}_{[t_{j},t_{j+1})}(t),$ where  $\xi_j\in L_{ip}(\Omega_{t_j})$, $t_0\leq \cdots\leq t_N$ is a partition of $[0,T]\}$;
    	\item $M_G^p(0,T)$ is the completion of $M_G^0(0,T)$ under the norm $\Vert\eta\Vert_{M_{G}^{p}}:=(\mathbb{\hat{E}}[\int_{0}^{T}|\eta_{s}|^{p}ds])^{1/p}$;
    	\item $H_G^p(0,T)$ is the completion of $M_G^0(0,T)$ under the norm$\|\eta\|_{H_G^p}:=\{\hat{\mathbb{E}}[(\int_0^T|\eta_s|^2ds)^{p/2}]\}^{1/p}$;
    	\item $S_G^0(0,T)=\{h(t,B_{t_1\wedge t}, \ldots,B_{t_n\wedge t}):t_1,\ldots,t_n\in[0,T],h\in C_{b,Lip}(\mathbb{R}^{n+1})\}$;
    	\item $S_G^p(0,T)$ is the completion of $S_G^0(0,T)$ under the norm
    	$\|\eta\|_{S_G^p}=\{\hat{\mathbb{E}}[\sup_{t\in[0,T]}|\eta_t|^p]\}^{1/p}$.
    \end{itemize}

    We denote by $\langle B\rangle$ the quadratic variation process of the $G$-Brownian motion $B$. For two processes $\eta\in M_G^p(0,T)$ and $\zeta\in H_G^p(0,T)$, Peng established the $G$-It\^{o} integrals $\int_0^\cdot \eta_s d\langle B\rangle_s$ and $\int_0^\cdot \zeta_s dB_s$.  Similar to the classical Burkholder--Davis--Gundy inequality, the following property holds.

    \begin{proposition}[\cite{HJPS2}]\label{the1.3}
	If $\eta\in H_G^{\alpha}(0,T)$ with $\alpha\geq 1$ and $p\in(0,\alpha]$, then
	$\sup_{u\in[t,T]}|\int_t^u\eta_s dB_s|^p\in L_G^1(\Omega_T)$ and
	\begin{displaymath}
	\underline{\sigma}^p c_p\hat{\mathbb{E}}_t[(\int_t^T |\eta_s|^2ds)^{p/2}]\leq
	\hat{\mathbb{E}}_t[\sup_{u\in[t,T]}|\int_t^u\eta_s dB_s|^p]\leq
	\bar{\sigma}^p C_p\hat{\mathbb{E}}_t[(\int_t^T |\eta_s|^2ds)^{p/2}],
	\end{displaymath}
	where $0<c_p<C_p<\infty$ are constants.
    \end{proposition}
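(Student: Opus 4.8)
The plan is to reduce the statement to the classical Burkholder--Davis--Gundy (BDG) inequality through the representation $\hat{\mathbb{E}}_t[\cdot]=\esssup_{P'}E_{P'}[\,\cdot\,|\mathcal{F}_t]$ of the conditional $G$-expectation, and then to convert the time integral $\int_t^T|\eta_s|^2\,ds$ into the quadratic-variation integral $\int_t^T|\eta_s|^2\,d\langle B\rangle_s$ using the non-degeneracy bound $\underline{\sigma}^2\,ds\leq d\langle B\rangle_s\leq\bar{\sigma}^2\,ds$, which holds quasi-surely. The constants in the final statement decouple accordingly: $c_p,C_p$ are the universal BDG constants, while the factors $\underline{\sigma}^p,\bar{\sigma}^p$ come purely from this change of integrator.

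First I would treat a simple integrand $\eta\in M_G^0(0,T)$ and set $M_u:=\int_t^u\eta_s\,dB_s$. Under every $P\in\mathcal{P}$ the process $(M_u)_{u\in[t,T]}$ is a continuous $P$-martingale with quadratic variation $\langle M\rangle_u=\int_t^u|\eta_s|^2\,d\langle B\rangle_s$, so the classical conditional BDG inequality, whose constants $c_p,C_p$ depend only on $p$ and not on $P$, gives, $P$-a.s.,
\[
c_p\,E_P\!\Big[\big(\textstyle\int_t^T|\eta_s|^2 d\langle B\rangle_s\big)^{p/2}\,\big|\,\mathcal{F}_t\Big]\le E_P\big[\textstyle\sup_u|M_u|^p\,\big|\,\mathcal{F}_t\big]\le C_p\,E_P\!\Big[\big(\textstyle\int_t^T|\eta_s|^2 d\langle B\rangle_s\big)^{p/2}\,\big|\,\mathcal{F}_t\Big].
\]
For the upper bound I would dominate the right-hand side by $\hat{\mathbb{E}}_t[(\int_t^T|\eta_s|^2 d\langle B\rangle_s)^{p/2}]$, since $E_P[\,\cdot\,|\mathcal{F}_t]\leq\hat{\mathbb{E}}_t[\cdot]$ $P$-a.s., and then take the essential supremum over $P$ on the left, which reproduces $\hat{\mathbb{E}}_t[\sup_u|M_u|^p]$; for the lower bound I would take the essential supremum over $P$ on both sides and pull the constant $c_p$ out. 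This yields the asserted inequality with $d\langle B\rangle_s$ in place of $ds$. Inserting $\underline{\sigma}^2\,ds\le d\langle B\rangle_s\le\bar{\sigma}^2\,ds$ then produces the factors $\underline{\sigma}^p$ and $\bar{\sigma}^p$ and completes the simple-integrand case.

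Finally, for a general $\eta\in H_G^\alpha(0,T)$ with $p\in(0,\alpha]$, I would approximate $\eta$ by simple processes $\eta^n\in M_G^0(0,T)$ in $\|\cdot\|_{H_G^\alpha}$. The membership $(\int_t^T|\eta_s|^2 ds)^{p/2}\in L_G^1(\Omega_T)$ follows from $\|\eta\|_{H_G^p}\le\|\eta\|_{H_G^\alpha}<\infty$, and applying the upper bound to the differences $\eta^n-\eta^m$ transfers the Cauchy property of $\int_t^\cdot\eta^n\,dB$ to the running suprema, so that all three terms pass to the limit and, in particular, $\sup_{u\in[t,T]}|\int_t^u\eta_s\,dB_s|^p\in L_G^1(\Omega_T)$.

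The step I expect to require the most care is the interchange of the essential supremum over $P$ with the conditional BDG inequality: it relies on the dynamic (conditional) representation of $\hat{\mathbb{E}}_t$ as an essential supremum over the measures of $\mathcal{P}$ agreeing with $P$ up to time $t$, together with the fact that the classical BDG constants are genuinely universal. A secondary difficulty is the approximation argument in the regime $0<p<1$, where $|\cdot|^p$ is only subadditive rather than convex, so the convergence of the suprema must be controlled through the inequality already established rather than through a norm triangle inequality.
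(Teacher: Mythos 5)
Your proposal is correct and follows essentially the same route as the standard proof of this result in the cited source \cite{HJPS2} (the paper itself quotes the proposition without proof): apply the classical conditional BDG inequality with its universal constants under each $P\in\mathcal{P}$, use the representation of $\hat{\mathbb{E}}_t$ as an essential supremum over measures agreeing with $P$ on $\mathcal{F}_t$ together with $E_P[\,\cdot\,|\mathcal{F}_t]\le\hat{\mathbb{E}}_t[\,\cdot\,]$, and convert $d\langle B\rangle_s$ into $ds$ via the quasi-sure bounds $\underline{\sigma}^2\,ds\le d\langle B\rangle_s\le\bar{\sigma}^2\,ds$. Your handling of the extension from simple integrands to $\eta\in H_G^\alpha(0,T)$, including the use of the already-established upper bound (rather than a triangle inequality) and the subadditivity of $|\cdot|^p$ when $0<p<1$, closes the argument correctly, so no gap remains.
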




    We now introduce some basic results of $G$-BSDEs. Consider the following type of $G$-BSDE
    \begin{equation}\label{eq1.1}
    Y_t=\xi+\int_t^T f(s,Y_s,Z_s)ds+\int_t^T g(s,Y_s,Z_s)d\langle B\rangle_s-\int_t^T Z_s dB_s-(K_T-K_t),
    \end{equation}
    where
    \begin{displaymath}
    f(t,\omega,y,z), \  g(t,\omega,y,z):[0,T]\times\Omega_T\times\mathbb{R}\times\mathbb{R}\rightarrow \mathbb{R}
    \end{displaymath}
    satisfy the following properties:
    \begin{description}
    	\item[(H1)] There exists some $\beta>1$ such that for any $y, z \in \mathbb{R}$,  $f(\cdot,\cdot,y,z), \ g(\cdot,\cdot,y,z)\in M_G^{\beta}(0,T)$;
    	\item[(H2)] There exists some $L>0$ such that
    	\begin{displaymath}
    	|f(t,y,z)-f(t,y',z')|+|g(t,y,z)-g(t,y',z')|\leq L(|y-y'|+|z-z'|).
    	\end{displaymath}
    \end{description}

    For simplicity, we denote by $\mathfrak{S}_G^{\alpha}(0,T)$ the collection of processes $(Y,Z,K)$ such that $Y\in S_G^{\alpha}(0,T)$, $Z\in H_G^{\alpha}(0,T)$, and $K$ is a decreasing $G$-martingale with $K_0=0$ and $K_T\in L_G^{\alpha}(\Omega_T)$. Hu et al \cite{HJPS1,HJPS2} established the existence and uniqueness result for Equation \eqref{eq1.1} as well as the comparison theorem.

\begin{theorem}[\cite{HJPS1}]\label{the1.4}
    	Assume that $\xi\in L_G^{\beta}(\Omega_T)$ and $f,g_{ij}$ satisfy \textsc{(H1)} and \textsc{(H2)} for some $\beta>1$. Then, for any $1<\alpha<\beta$, Equation \eqref{eq1.1} has a unique solution $(Y,Z,K)\in \mathfrak{S}_G^{\alpha}(0,T)$. Moreover, we have
    	\begin{displaymath}
    	|Y_t|^\alpha\leq C\hat{\mathbb{E}}_t[|\xi|^\alpha+\int_t^T |f(s,0,0)|^\alpha+\sum_{i,j=1}^d|g_{ij}(s,0,0)|^\alpha ds],
    	\end{displaymath}
    	where the constant $C$ depends on $\alpha$, $T$, $\underline{\sigma}$ and $L$.
\end{theorem}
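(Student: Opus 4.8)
The plan is to build the solution from the simplest case upward and to base both existence and uniqueness on a single family of a priori estimates. The cornerstone is an estimate controlling $\|Y\|_{S_G^\alpha}$, $\|Z\|_{H_G^\alpha}$ and $\|K_T\|_{L_G^\alpha}$ in terms of the data $\xi$, $f(\cdot,0,0)$, $g(\cdot,0,0)$; the pointwise bound in the ``Moreover'' part is the heart of it. To obtain it I would apply the $G$-It\^o formula to $(|Y_t|^2+\varepsilon)^{\alpha/2}$ --- the regularization $\varepsilon>0$ being needed because $y\mapsto|y|^\alpha$ fails to be $C^2$ at the origin when $\alpha$ is close to $1$ --- and then let $\varepsilon\downarrow 0$. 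Taking conditional $G$-expectation $\hat{\mathbb{E}}_t[\cdot]$ kills the $\int Z\,dB$ term, while the decreasing $G$-martingale $K$ contributes a term of favorable sign and may be discarded. The surviving drift terms are handled by the Lipschitz hypothesis \textsc{(H2)}, by the two-sided bound $\underline{\sigma}^2\,dt\le d\langle B\rangle_s\le\bar{\sigma}^2\,dt$ on the quadratic variation, and by Young's inequality to absorb the quadratic contribution of $Z$; a Gronwall-type argument then closes the estimate. Proposition \ref{the1.3} (the $G$-BDG inequality) together with Theorem \ref{the1.2} is what upgrades these pointwise and integral bounds to the $S_G^\alpha$ and $H_G^\alpha$ norms.

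Uniqueness is then immediate: given two solutions with the same data, the differences $(\delta Y,\delta Z,\delta K)$ solve a $G$-BSDE whose generator, after linearizing via \textsc{(H2)}, is Lipschitz with vanishing source term, so the a priori estimate forces $\delta Y\equiv 0$ and hence $\delta Z\equiv 0$ and $\delta K\equiv 0$.

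For existence I would proceed in two stages. First I would treat the case in which $f$ and $g$ do not depend on $(y,z)$. Here one sets $Y_t:=\hat{\mathbb{E}}_t\big[\xi+\int_t^T f(s)\,ds+\int_t^T g(s)\,d\langle B\rangle_s\big]$, so that $Y_t+\int_0^t f\,ds+\int_0^t g\,d\langle B\rangle_s$ is a $G$-martingale; the nonlinear martingale representation theorem for $G$-martingales (see \cite{HJPS1}) then furnishes $Z\in H_G^\alpha(0,T)$ and a decreasing $G$-martingale $K$ with $K_T\in L_G^\alpha(\Omega_T)$ realizing the required decomposition. Second, for the general Lipschitz generator I would run a Picard iteration: given $(Y^n,Z^n)$, define $(Y^{n+1},Z^{n+1},K^{n+1})$ as the solution of the simple equation with frozen coefficients $f(s,Y^n_s,Z^n_s)$ and $g(s,Y^n_s,Z^n_s)$. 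Applying the a priori estimate to consecutive differences shows this map is a contraction in $\mathfrak{S}_G^\alpha(0,T)$ once one works on a sufficiently short interval (or with a suitably weighted norm), so the iterates converge; concatenating the short-interval solutions yields a solution on all of $[0,T]$, and \textsc{(H1)} guarantees that the source terms lie in the correct spaces, so the limiting triple belongs to $\mathfrak{S}_G^\alpha(0,T)$.

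The main obstacle is the representation step in the constant-generator case: unlike the classical Brownian setting, a $G$-martingale is not representable by a stochastic integral alone, and extracting the pair $(Z,K)$ --- with $K$ genuinely decreasing and with the sharp $L_G^\alpha$/$H_G^\alpha$ integrability --- is the deep analytic input. Everything else is driven by the a priori estimate, whose delicate part is the careful $G$-It\^o computation for the non-smooth power $|y|^\alpha$ together with the correct bookkeeping of the $d\langle B\rangle$ term.
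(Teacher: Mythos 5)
This theorem is a preliminary quoted from \cite{HJPS1}; the present paper gives no proof, so the relevant comparison is with the proof in \cite{HJPS1}, which, as the introduction here explicitly recalls, proceeds by ``the Galerkin approximation technique and the PDE approach'': one first solves the equation for Markovian data $\xi=\varphi(B_{t_1},\dots,B_{t_n})$, where the solution is built from solutions $u$ of the associated fully nonlinear PDE on each subinterval (so that $Y_t=u(t,B_t)$, $Z_t=\partial_x u(t,B_t)$, and $K$ is read off from the $G$-It\^o formula), and then reaches general data by density together with a priori estimates for differences of solutions. Your a priori estimate and your uniqueness argument are essentially the ones in \cite{HJPS1} (compare also Proposition \ref{lem0} and Lemma \ref{Esti-Y} of this paper), with one caveat: the term $\int H_s^{\alpha/2-1}Y_s\,dK_s$ is not simply ``of favorable sign,'' since $Y$ may change sign; one must split $Y=Y^+-Y^-$ and use Lemma 3.4 of \cite{HJPS1} so that the $(Y_s)^+dK_s$-type contributions assemble into a $G$-martingale, exactly as is done in the proof of Proposition \ref{lem0} here.

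The genuine gap is in your existence scheme. Step (i) is fine: for generators independent of $(y,z)$, the pair $(Z,K)$ is indeed supplied by Song's $G$-martingale decomposition theorem \cite{S11}. But the Picard iteration of step (ii) does not close, and this failure is precisely why \cite{HJPS1} did not use a fixed-point method. When you compare two iterates, the decreasing $G$-martingales $K^{n+1}$ and $K^{n}$ of the two equations do not cancel (conditional sublinear expectation does not annihilate their difference), so applying It\^o's formula to $|\hat Y|^2$ leaves the term $\int \hat Y_s\,d(K^{n+1}_s-K^{n}_s)$, which can only be bounded by $\sup_t|\hat Y_t|\,(|K^{n+1}_T|+|K^{n}_T|)$. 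The resulting estimate for $\hat Z$ therefore has the structure of Theorem \ref{Esti-Z-GBSDE}:
\[
\hat{\mathbb{E}}\Big[\Big(\int_0^T|\hat Z_s|^2\,ds\Big)^{\alpha/2}\Big]\le C\Big\{\|\hat Y\|_{S_G^\alpha}^{\alpha}+\|\hat Y\|_{S_G^\alpha}^{\alpha/2}\,m^{1/2}\Big\},
\]
where $m$ is a bound on the $K_T$-norms that does \emph{not} shrink along the iteration. Thus $\|\hat Z\|$ is controlled only by the square root of $\|\hat Y\|$, and feeding this back into the $Y$-difference estimate yields a recursion of the shape $c_{n+1}\le \varepsilon c_n + C\sqrt{\varepsilon c_n}$, whose fixed point is of order $\varepsilon>0$ rather than $0$: the iterates need not be Cauchy, on short intervals or with weighted norms. (A secondary issue: passing from the pointwise bound $|\hat Y_t|^\alpha\le C\hat{\mathbb{E}}_t[\cdot]$ to the $S_G^\alpha$-norm uses the Doob-type Theorem \ref{the1.2}, which costs integrability $\alpha+\delta\to\alpha$ and so cannot be applied at every one of infinitely many iterations.) The PDE/Galerkin route of \cite{HJPS1} avoids both problems because in the density argument one compares solutions with the same generator and varying data; then the $Y$-difference estimate, as in Proposition \ref{lem0}, has a right-hand side depending only on the data differences and not on $\hat Z$, so the square-root loss multiplies a quantity that genuinely tends to zero.
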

  Below is a generalization of Proposition 3.5 in \cite{HJPS1}.
\begin{theorem} \label{Esti-Z-GBSDE}
Let $f$ satisfy (H1) and (H2) for some $\beta>1$. Assume
\[
Y_{t}=\xi+\int_{t}^{T}f(s,Y_{s},Z_{s})ds-\int_{t}^{T}Z_{s}dB_{s}-(K_{T}
-K_{t})+(A_T-A_t),
\]
where $Y\in S_G^{\alpha}(0,T)$, $Z\in H_G^{\alpha}(0,T)$, $K, A$ are both
decreasing process with $A_0=K_{0}=0$ and $A_T, K_{T}\in L_G^{\alpha}(\Omega
_{T})$ for some $\beta\geq\alpha>1$. Then there exists a constant $C_{\alpha
}:=C(\alpha,T,\underline{\sigma},L)>0$ such that
\begin{equation*}
\mathbb{\hat{E}}[(\int_{0}^{T}|Z_{s}|^{2}ds)^{\frac{\alpha}{2}}]\leq
C_{\alpha}\bigg\{ \mathbb{\hat{E}}[\sup_{t\in\lbrack0,T]}|Y_{t}|^{\alpha
}]+\bigg(\mathbb{\hat{E}}[\sup_{t\in\lbrack0,T]}|Y_{t}|^{\alpha}]\bigg)^{\frac{1}{2}%
}\bigg(\big(\mathbb{\hat{E}}[(\int_{0}^{T}f_{s}^{0}ds)^{\alpha}]\big)^{\frac{1}{2}}+\big(m_\alpha^{A, K}\big)^{1/2}\bigg)\bigg\},
\end{equation*}
where $f_{s}^{0}=|f(s,0,0)|$, $m_\alpha^{A,K}=\min \{\hat{\mathbb{E}}[|A_T|^\alpha], \hat{\mathbb{E}}[|K_T|^\alpha]\}$.
\end{theorem}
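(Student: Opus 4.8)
The plan is to apply the $G$-It\^o formula to $|Y_t|^2$, isolate the quadratic-variation term $\int_0^T|Z_s|^2\,d\langle B\rangle_s$, and then exploit non-degeneracy $\underline{\sigma}^2\,ds\le d\langle B\rangle_s$ to pass to $\int_0^T|Z_s|^2\,ds$. Writing the equation in differential form $dY_t=-f(t,Y_t,Z_t)\,dt+Z_t\,dB_t+dK_t-dA_t$ and using $d|Y_t|^2=2Y_t\,dY_t+Z_t^2\,d\langle B\rangle_t$, integration over $[0,T]$ gives
\begin{equation*}
\underline{\sigma}^2\int_0^T|Z_s|^2\,ds\le |\xi|^2+2\int_0^T Y_sf(s,Y_s,Z_s)\,ds-2\int_0^T Y_sZ_s\,dB_s-2\int_0^T Y_s\,dK_s+2\int_0^T Y_s\,dA_s,
\end{equation*}
where I dropped $-|Y_0|^2\le 0$. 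Since $K$ and $A$ are decreasing with $K_0=A_0=0$, the two finite-variation terms satisfy $-2\int_0^T Y_s\,dK_s\le 2Y^*|K_T|$ and $2\int_0^T Y_s\,dA_s\le 2Y^*|A_T|$, where $Y^*:=\sup_{t\in[0,T]}|Y_t|$.

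The crucial point, and the source of the $\min$ in $m_\alpha^{A,K}$, is that one of $A_T,K_T$ can be eliminated in favour of the other. By the symmetry $Y\mapsto -Y$, $f(s,y,z)\mapsto -f(s,-y,-z)$, which preserves (H1)--(H2) and swaps the roles of $A$ and $K$, I may assume $m_\alpha^{A,K}=\hat{\mathbb{E}}[|K_T|^\alpha]$. Evaluating the equation at $t=0$ yields $A_T=Y_0-\xi-\int_0^T f(s,Y_s,Z_s)\,ds+\int_0^T Z_s\,dB_s+K_T$, hence
\begin{equation*}
2Y^*|A_T|\le 4(Y^*)^2+2Y^*\int_0^T|f(s,Y_s,Z_s)|\,ds+2Y^*\Big|\int_0^T Z_s\,dB_s\Big|+2Y^*|K_T|,
\end{equation*}
so that $|A_T|$ no longer appears. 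Then I would raise the resulting inequality to the power $\alpha/2$, take $\hat{\mathbb{E}}$, and estimate term by term: use (H2) together with $\int_0^T|Z_s|\,ds\le T^{1/2}(\int_0^T|Z_s|^2\,ds)^{1/2}$ to replace $f$ by $f_s^0+L|Y_s|+L|Z_s|$; apply the $G$-BDG inequality (Proposition \ref{the1.3}) to the two stochastic integrals, giving $\hat{\mathbb{E}}[|\int_0^T Y_sZ_s\,dB_s|^{\alpha/2}]\le C\,\hat{\mathbb{E}}[(Y^*)^\alpha]^{1/2}\hat{\mathbb{E}}[(\int_0^T|Z_s|^2ds)^{\alpha/2}]^{1/2}$ and likewise $\hat{\mathbb{E}}[|\int_0^T Z_s\,dB_s|^\alpha]\le C\,\hat{\mathbb{E}}[(\int_0^T|Z_s|^2ds)^{\alpha/2}]$; and apply the sub-linear H\"older inequality to split all mixed products.

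Writing $J:=\hat{\mathbb{E}}[(\int_0^T|Z_s|^2ds)^{\alpha/2}]$, all these bounds reduce the right-hand side to $C\{\hat{\mathbb{E}}[(Y^*)^\alpha]+\hat{\mathbb{E}}[(Y^*)^\alpha]^{1/2}(\hat{\mathbb{E}}[(\int_0^T f_s^0\,ds)^\alpha]^{1/2}+\hat{\mathbb{E}}[|K_T|^\alpha]^{1/2}+J^{1/2})\}$. The only term containing $J$ on the right is $C\hat{\mathbb{E}}[(Y^*)^\alpha]^{1/2}J^{1/2}$, which Young's inequality absorbs into the left-hand side, leaving exactly the claimed bound with $\hat{\mathbb{E}}[|K_T|^\alpha]=m_\alpha^{A,K}$. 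I expect the main obstacle to be the bookkeeping that makes the $\min$ appear: after eliminating $A_T$ one re-introduces $\int_0^T|Z_s|^2ds$ (through $\int_0^T|Z_s|\,ds$ and $\int_0^T Z_s\,dB_s$), and one must check that every such contribution is of order $J^{1/2}$, so that it can be absorbed, rather than of order $J$. A secondary point needing care is the justification of the $G$-It\^o formula for the generalized $G$-It\^o process $Y$ carrying the two finite-variation parts $K$ and $A$, and the verification that $Y_sZ_s$ has enough integrability for Proposition \ref{the1.3} to apply, both of which follow from $Y\in S_G^\alpha(0,T)$, $Z\in H_G^\alpha(0,T)$ and $A_T,K_T\in L_G^\alpha(\Omega_T)$.
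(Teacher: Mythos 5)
Your proposal is correct and takes essentially the same approach as the paper: It\^o's formula applied to $|Y_t|^2$, the bounds $-2\int_0^T Y_s\,dK_s\le 2Y^*|K_T|$ and $2\int_0^T Y_s\,dA_s\le 2Y^*|A_T|$, elimination of the larger of $A_T$, $K_T$ through the equation evaluated at $t=0$, then BDG, H\"older and Young's absorption of the $J^{1/2}$ terms, with the $K$--$A$ symmetry yielding the minimum $m_\alpha^{A,K}$. The only cosmetic difference is that you substitute the identity for $A_T$ pointwise before taking expectations, whereas the paper derives the two expectation-level estimates \eqref{Esti-Z-GBSDE-Proof} and \eqref{Esti-K-GBSDE-Proof} separately and combines them.
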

\begin{proof}
Applying It\^{o}'s formula to $|Y_{t}|^{2}$, we have%
\[
|Y_{0}|^{2}+\int_{0}^{T}|Z_{s}|^{2}d\langle B\rangle_{s}=|\xi|^{2}+\int%
_{0}^{T}2Y_{s}f(s)ds-\int_{0}^{T}2Y_{s}Z_{s}dB_{s}-\int_{0}^{T}2Y_{s}(dK_{s}-dA_{s}),
\]
where $f(s)=f(s,Y_{s},Z_{s})$. Then%
\[
(\int_{0}^{T}|Z_{s}|^{2}d\langle B\rangle_{s})^{\frac{\alpha}{2}}\leq
C_{\alpha}\bigg\{|\xi|^{\alpha}+|\int_{0}^{T}Y_{s}f(s)ds|^{\frac{\alpha}{2}}
+|\int_{0}^{T}Y_{s}Z_{s}dB_{s}|^{\frac{\alpha}{2}}+|\int_{0}^{T}Y_{s}%
dK_{s}|^{\frac{\alpha}{2}}+|\int_{0}^{T}Y_{s}%
dA_{s}|^{\frac{\alpha}{2}}\bigg\}.
\]
By simple calculation, we can obtain%
\begin{equation}\label {Esti-Z-GBSDE-Proof}
\mathbb{\hat{E}}[(\int_{0}^{T}|Z_{s}|^{2}ds)^{\frac{\alpha}{2}}]\leq
C_{\alpha}\bigg\{ \Vert Y\Vert_{S_G^{\alpha}}^{\alpha}+\Vert Y\Vert
_{S_G^{\alpha}}^{\frac{\alpha}{2}}\bigg[(\mathbb{\hat{E}}[|K_{T}|^{\alpha
}])^{\frac{1}{2}}+(\mathbb{\hat{E}}[|A_{T}|^{\alpha
}])^{\frac{1}{2}}+(\mathbb{\hat{E}}[(\int_{0}^{T}f_{s}^{0}ds)^{\alpha
}])^{\frac{1}{2}}\bigg]\bigg\}.
\end{equation}
On the other hand, noting that
\[
K_{T}=\xi-Y_{0}+\int_{0}^{T}f(s)ds-\int_{0}^{T}Z_{s}dB_{s}+A_T,
\]
we get%
\begin{equation} \label {Esti-K-GBSDE-Proof}
\mathbb{\hat{E}}[|K_{T}|^{\alpha}]\leq C_{\alpha}\bigg\{ \Vert Y\Vert
_{S_G^{\alpha}}^{\alpha}+\mathbb{\hat{E}}[(\int_{0}^{T}|Z_{s}%
|^{2}ds)^{\alpha/2}]+\mathbb{\hat{E}}[(\int_{0}^{T}f_{s}^{0}ds)^{\alpha}]+\mathbb{\hat{E}}[|A_{T}|^{\alpha}]\bigg\}.
\end{equation}
 Suppose that $\hat{\mathbb{E}}[|K_T|^\alpha]\geq \hat{\mathbb{E}}[|A_T|^\alpha]$. By (\ref{Esti-Z-GBSDE-Proof}) and (\ref{Esti-K-GBSDE-Proof}), we have
\begin{displaymath}
	\mathbb{\hat{E}}[(\int_{0}^{T}|Z_{s}|^{2}ds)^{\frac{\alpha}{2}}]\leq
	C_{\alpha}\bigg\{ \mathbb{\hat{E}}[\sup_{t\in\lbrack0,T]}|Y_{t}|^{\alpha
	}]+\bigg(\mathbb{\hat{E}}[\sup_{t\in\lbrack0,T]}|Y_{t}|^{\alpha}]\bigg)^{\frac{1}{2}%
	}\bigg(\big(\mathbb{\hat{E}}[(\int_{0}^{T}f_{s}^{0}ds)^{\alpha}]\big)^{\frac{1}{2}}+\big(\hat{\mathbb{E}}[|A_T|^\alpha]\big)^{1/2}\bigg)\bigg\}.
\end{displaymath}
By symmetry of $K$ and $A$, we get the desired result.
\end{proof}

   \begin{theorem}[\cite{HJPS2}]\label{the1.5}
    	Let $(Y_t^l,Z_t^l,K_t^l)_{t\leq T}$, $l=1,2$, be the solutions of the following $G$-BSDEs:
    	\begin{displaymath}
    	Y^l_t=\xi^l+\int_t^T f^l(s,Y^l_s,Z^l_s)ds+\int_t^T g^l_{ij}(s,Y^l_s,Z^l_s)d\langle B^i,B^j\rangle_s+V_T^l-V_t^l-\int_t^T Z^l_s dB_s-(K^l_T-K^l_t),
    	\end{displaymath}
    	where processes $\{V_t^l\}_{0\leq t\leq T}$ are assumed to be right-continuous with left limits (RCLL), q.s., such that $\hat{\mathbb{E}}[\sup_{t\in[0,T]}|V_t^l|^\beta]<\infty$, $f^l,\ g^l_{ij}$ satisfy \textsc{(H1)} and \textsc{(H2)}, $\xi^l\in L_G^{\beta}(\Omega_T)$ with $\beta>1$. If $\xi^1\geq \xi^2$, $f^1\geq f^2$, $g^1_{ij}\geq g^2_{ij}$, for $i,j=1,\cdots,d$ and $V_t^1-V_t^2$ is an increasing process, then $Y_t^1\geq Y_t^2$.
    \end{theorem}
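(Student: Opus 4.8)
The natural first move is to linearise. Put $\hat Y=Y^1-Y^2$, $\hat Z=Z^1-Z^2$, $\hat K=K^1-K^2$ and $\hat\xi=\xi^1-\xi^2\ge 0$. Subtracting the two equations and splitting
\[
f^1(s,Y^1_s,Z^1_s)-f^2(s,Y^2_s,Z^2_s)=\big[f^1(s,Y^1_s,Z^1_s)-f^1(s,Y^2_s,Z^2_s)\big]+\big[f^1(s,Y^2_s,Z^2_s)-f^2(s,Y^2_s,Z^2_s)\big],
\]
the first bracket equals $a_s\hat Y_s+b_s\hat Z_s$ for processes $a,b$ bounded by $L$ (difference quotients, set to $0$ where a denominator vanishes) by (H2), while the second is nonnegative since $f^1\ge f^2$; the $g^l_{ij}$ terms are handled identically, producing bounded coefficients against $d\langle B^i,B^j\rangle_s$ together with a further nonnegative source. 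Since $\langle B\rangle$ is increasing and $\hat V=V^1-V^2$ is increasing, the nonnegative sources and $\hat V$ can be gathered into a single increasing process $\bar V$. Using non-degeneracy ($\underline\sigma^2>0$) to rewrite the $ds$-drift on $\hat Z$ as a term against $d\langle B\rangle_s$, a Girsanov change of $G$-Brownian motion (\cite{HJPS2}) absorbs the $\hat Z$-drift and a positive integrating factor $\Gamma$ absorbs the $\hat Y$-drift, leading to an identity of the form
\[
\Gamma_t\hat Y_t=\Gamma_T\hat\xi+\int_t^T\Gamma_s\,d\bar V_s-\int_t^T\Gamma_s\hat Z_s\,d\tilde B_s-\int_t^T\Gamma_s\,dK^1_s+\int_t^T\Gamma_s\,dK^2_s .
\]

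The main obstacle is the pair of terms involving $K^1$ and $K^2$. In the analysis of a single $G$-BSDE, for instance in the a priori bounds of Theorem \ref{the1.4} and Theorem \ref{Esti-Z-GBSDE}, the decreasing $G$-martingale enters with a definite sign, since $-(K_T-K_t)\ge 0$, and can be controlled after taking conditional $G$-expectation. Here, by contrast, $\hat K=K^1-K^2$ is a difference of two decreasing $G$-martingales and is \emph{not} monotone: the term $-\int_t^T\Gamma_s\,dK^1_s$ is nonnegative while $\int_t^T\Gamma_s\,dK^2_s$ is nonpositive, and because $\hat{\mathbb E}_t$ is only sub-additive the unfavourable term cannot be cancelled. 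This is exactly why the differencing argument that proves the classical comparison theorem breaks down under $G$-expectation, and it is the crux of the matter.

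To circumvent this I would avoid forming $\hat K$ altogether and instead establish monotonicity of the solution map directly. First I would reduce to Markovian data: when $\xi^l=\phi^l(B_T)$, the coefficients $f^l,g^l$ are deterministic functions of $(t,B_t,y,z)$, and $V^l$ are Markovian, the nonlinear Feynman--Kac formula of \cite{HJPS2} gives $Y^l_t=u^l(t,B_t)$ with $u^l$ a viscosity solution of a fully nonlinear parabolic PDE whose data inherit the orderings $\phi^1\ge\phi^2$, $f^1\ge f^2$, $g^1\ge g^2$; the comparison principle for such PDEs then yields $u^1\ge u^2$, hence $Y^1_t\ge Y^2_t$. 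For general parameters I would approximate $(\xi^l,f^l,g^l,V^l)$ by Markovian data that preserve all the pointwise orderings and the increasing-difference property of $V^1-V^2$, solve the corresponding $G$-BSDEs, and pass to the limit using the stability estimates of Theorem \ref{the1.4}. The delicate technical point is precisely constructing these approximations so that both the generator inequalities and the monotonicity of $V^1-V^2$ are retained along the scheme; once this is arranged, the limiting inequality $Y^1_t\ge Y^2_t$ holds q.s. in view of Theorem \ref{the1.1}.
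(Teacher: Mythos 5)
This theorem is not proved in the paper at all: it is quoted verbatim from \cite{HJPS2} (Theorem \ref{the1.5} carries the citation), so the fair comparison is with the argument that reference and this paper's own internal usage supply. Your Stage 1 (linearisation, Girsanov, integrating factor) is the standard opening, but your diagnosis that the term $\int_t^T\Gamma_s\,dK^2_s$ kills the differencing argument is mistaken, and this is the genuine gap. The resolution is Lemma 3.4 of \cite{HJPS1}: the integral of a bounded nonnegative process against a decreasing $G$-martingale is again a $G$-martingale; moreover, decreasing $G$-martingales remain martingales under the Girsanov-transformed sublinear expectation (Theorem 5.1 of \cite{HJPS2}, invoked in exactly this way in the proof of Lemma \ref{Conv-U-L} above). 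Hence $N_t:=\int_0^t\Gamma_s\,dK^2_s$ is an $\tilde{\mathbb{E}}$-martingale, and from your own identity, dropping the nonnegative terms $\Gamma_T\hat\xi$, $\int_t^T\Gamma_s\,d\bar V_s$ and $-\int_t^T\Gamma_s\,dK^1_s$, one has pathwise $\Gamma_t\hat Y_t\geq -\int_t^T\Gamma_s\hat Z_s\,d\tilde B_s+(N_T-N_t)$; since the stochastic integral is a symmetric $\tilde{\mathbb{E}}$-martingale, monotonicity of $\tilde{\mathbb{E}}_t$ gives $\Gamma_t\hat Y_t\geq\tilde{\mathbb{E}}_t[-\int_t^T\Gamma_s\hat Z_s\,d\tilde B_s+N_T-N_t]=0$. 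Nothing needs to be ``cancelled'' by sub-additivity; one needs monotonicity plus the martingale property of the unfavourable increment. This is precisely the device this paper itself uses in Proposition \ref{lem0} and Lemma \ref{Esti-Y}, where terms such as $\int_0^t(\hat Y_s)^+dK^{1,n}_s$ are declared $G$-martingales by Lemma 3.4 in \cite{HJPS1}. A smaller error in the same vein: rewriting the $ds$-drift $b_s\hat Z_s\,ds$ against $d\langle B\rangle_s$ via non-degeneracy is not legitimate in the $G$-framework (the density $d\langle B\rangle_s/ds$ is not a universally admissible integrand), and it is also unnecessary, since the Girsanov transformation of \cite{HJPS2} handles $ds$-drifts directly, exactly as with $\tilde B^{\varepsilon,n}_t=B_t-\int_0^t a^{\varepsilon,n}_s\,ds$ in Lemma \ref{Conv-U-L}.

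The substitute route you propose does not close the argument. The Feynman--Kac formula of \cite{HJPS2} covers $G$-BSDEs without the exogenous term $V^l$; here the $V^l$ are general RCLL processes constrained only by $\hat{\mathbb{E}}[\sup_t|V^l_t|^\beta]<\infty$ and the monotonicity of $V^1-V^2$, so they admit no Markovian structure and the associated PDE would carry a measure-valued forcing for which the comparison principle you invoke is not standard. A general $\xi\in L_G^\beta(\Omega_T)$ is path dependent, so terminal data $\phi(B_T)$ do not suffice; one needs $\varphi(B_{t_1},\dots,B_{t_n})$ and a backward cascade of PDEs, compounding the difficulty. Passing to the limit requires stability of the solution map in the $V$-variable, which the estimate of Theorem \ref{the1.4} (quoted without $V$) does not provide. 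Finally, the construction of Markovian approximations preserving simultaneously $\xi^1\geq\xi^2$, $f^1\geq f^2$, $g^1\geq g^2$ and the increase of $V^1-V^2$ is exactly what you leave open, and it is where the scheme would founder. In short: you abandoned the direct differencing proof at the very point where it succeeds with tools already used in this paper, and replaced it with a sketch whose essential steps are missing.
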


  Compared to the classical BSDE, there appears, in the BSDE driven by $G$-Brownian motion, an additional nonincreasing $G$-martingale $K$, which exhibits the uncertainty of the model.  The difficulty in the analysis of $G$-BSDE mainly lies in the appearance of this component. Song \cite{S16} proved that, the nonincreasing $G$-martingale could not be form of $\{\int_0^t \eta_sdt\}$ or $\{\int_0^t \gamma_sd\langle B\rangle_s\}$, where $\eta,\gamma\in M_G^1(0,T)$. More generally, he proved the following result.

    \begin{theorem}[\cite{S16}]\label{the1.6}
    	Assume that for $t\in[0,T]$, $\int_0^t \zeta_s dB_s+\int_0^t \eta_sds+K_t=L_t$, where $\zeta\in H_G^1(0,T)$, $\eta\in M_G^1(0,T)$ and $K,L$ are nonincreasing $G$-martingales. Then we have $\int_0^t \zeta_sdB_s=0$, $\int_0^t \eta_sds=0$ and $K_t=L_t$.
    \end{theorem}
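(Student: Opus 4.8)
The plan is to isolate the three unknown components one at a time, exploiting the fact that $\int_0^t\zeta_s dB_s$ is the only term carrying genuine martingale fluctuations, while $\int_0^t\eta_s ds$, $K$ and $L$ are all of finite variation. I would argue both pathwise under each $P\in\mathcal P$ (to invoke the classical uniqueness of the semimartingale decomposition) and inside the $G$-framework (to use the sub-additivity of $\hat{\mathbb E}_s$). The two structural inputs are the non-degeneracy $\underline\sigma^2>0$ and the representation $\hat{\mathbb E}=\sup_{P\in\mathcal P}E_P$ of Theorem \ref{the1.1}.

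First I would kill the stochastic integral. Rewrite the identity as $\int_0^t\zeta_s dB_s+\int_0^t\eta_s ds=L_t-K_t$ and fix $P\in\mathcal P$. Under $P$ the canonical process $B$ is a continuous martingale with $d\langle B\rangle_s=\sigma_s^2\,ds$, $\underline\sigma^2\le\sigma_s^2\le\bar\sigma^2$, so $\int_0^\cdot\zeta_s dB_s$ is a continuous $P$-local martingale, whereas $\int_0^\cdot\eta_s ds$, $K$ and $L$ are of finite variation (being respectively absolutely continuous and nonincreasing) and hence carry no continuous martingale part. By uniqueness of the continuous local-martingale part of a continuous $P$-semimartingale, $\int_0^t\zeta_s dB_s=0$ $P$-a.s.; as $P\in\mathcal P$ is arbitrary this holds q.s. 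Its quadratic variation then satisfies $\int_0^t\zeta_s^2\,d\langle B\rangle_s=0$ q.s., and $\underline\sigma^2>0$ forces $\zeta_s=0$ for a.e.\ $s$, q.s., so $\int_0^t\zeta_s dB_s\equiv 0$.

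It remains to analyse $A_t:=\int_0^t\eta_s ds=L_t-K_t$. Writing $L_t=K_t+A_t$, the $G$-martingale property of $L$ together with sub-additivity gives $L_s=\hat{\mathbb E}_s[K_t+A_t]\le K_s+\hat{\mathbb E}_s[A_t]$, and pulling the $\mathcal F_s$-measurable $A_s$ out yields $\hat{\mathbb E}_s[\int_s^t\eta_u\,du]\ge 0$; symmetrically, from $K_t=L_t-A_t$ and the $G$-martingale property of $K$ one obtains $\hat{\mathbb E}_s[-\int_s^t\eta_u\,du]\ge 0$, for all $0\le s\le t\le T$. For a simple integrand $\eta=\sum_j\xi_j\mathbf{1}_{[t_j,t_{j+1})}$ with $\xi_j\in L_{ip}(\Omega_{t_j})$ these two inequalities read, on each block, $(t_{j+1}-t_j)\xi_j\ge 0$ and $-(t_{j+1}-t_j)\xi_j\ge 0$, so $\xi_j=0$ and $A\equiv 0$; once $A\equiv 0$ the original identity collapses to $K_t=L_t$.

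The main obstacle is precisely the passage from simple $\eta$ to a general $\eta\in M_G^1$. Because $\hat{\mathbb E}$ is only sub-additive, $L-K$ need not be a $G$-martingale and the two nonincreasing components cannot simply be subtracted, so the vanishing of the absolutely continuous drift must be teased out of the one-sided estimates above rather than from a cancellation. The difficulty is compounded by two features of the $G$-setting: the natural forward average $\frac1{t_{j+1}-t_j}\int_{t_j}^{t_{j+1}}\eta_u\,du$ is $\mathcal F_{t_{j+1}}$-measurable, not adapted at the left endpoint, so it does not produce an admissible simple approximation, and the mutual singularity of the measures in $\mathcal P$ forbids combining the pathwise identities across different $P$. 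I expect this step to require either a careful localization and time-differentiation argument, or the $G$-martingale representation applied to $K$ and $L$ (reducing the claim to the vanishing of an absolutely continuous process equated to an integral against $\langle B\rangle$, which is ruled out by non-degeneracy), to finally conclude $\eta=0$.
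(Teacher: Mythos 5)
You should first note that the paper contains no proof of this statement: Theorem \ref{the1.6} is quoted from Song \cite{S16}, so your attempt has to stand on its own rather than be measured against an internal argument. Judged that way, your first stage is correct and complete: under each $P\in\mathcal{P}$ the $G$-It\^{o} integral coincides with the classical one, so $\int_0^\cdot \zeta_s dB_s$ is a continuous $P$-local martingale equal to the finite-variation process $L-K-\int_0^\cdot\eta_s ds$, hence vanishes identically $P$-a.s., and quasi-surely since $P$ is arbitrary; non-degeneracy $\underline{\sigma}^2>0$ then kills $\zeta$. Your derivation of the two one-sided inequalities $\hat{\mathbb{E}}_s[\int_s^t \eta_u du]\geq 0$ and $\hat{\mathbb{E}}_s[-\int_s^t \eta_u du]\geq 0$ (via translation invariance for $\mathcal{F}_s$-measurable summands) is also sound, as is the disposal of step-process integrands.

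The gap is the step you yourself flag and leave open --- passing from simple $\eta$ to general $\eta\in M_G^1(0,T)$ --- and it is not a removable technicality: the program as you framed it cannot close, because the two inequalities you extracted do \emph{not} characterize $\eta=0$. Concretely, take $K_t=\langle B\rangle_t-\bar{\sigma}^2 t$ and $L\equiv 0$; both are nonincreasing $G$-martingales (for $K$, note $\hat{\mathbb{E}}_s[\langle B\rangle_t-\langle B\rangle_s]=\bar{\sigma}^2(t-s)$ and $d\langle B\rangle_u\leq\bar{\sigma}^2du$), and $L_t-K_t=\int_0^t\eta_u du$ with the pathwise density $\eta_u=\bar{\sigma}^2-\frac{d\langle B\rangle_u}{du}$, which is nonzero whenever $\underline{\sigma}<\bar{\sigma}$. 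This $\eta$ satisfies both of your inequalities, since $\hat{\mathbb{E}}_s[-\int_s^t\eta_u du]=0$ and $\hat{\mathbb{E}}_s[\int_s^t\eta_u du]=(\bar{\sigma}^2-\underline{\sigma}^2)(t-s)\geq 0$, yet the conclusion of the theorem fails for it. The theorem is true only because this density does \emph{not} belong to $M_G^1(0,T)$ --- indeed the theorem is essentially equivalent to that non-membership --- so any correct proof must use the $M_G^1$-regularity of $\eta$ (approximability by step processes in the $M_G^1$ norm) in a quantitative way, whereas your reduction to the two inequalities discards exactly this information and invokes it only in the trivial simple-process case. The obstacles you list (non-adaptedness of forward averages, mutual singularity of the measures in $\mathcal{P}$) are symptoms of this, but the example above shows that neither a localization/differentiation argument nor a martingale-representation appeal can succeed from the inequalities alone; this is where the real work of \cite{S16} lies, and it is absent from the proposal.
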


    \begin{remark}\label{r1}
    	A process of the following form is called a generalized $G$-It\^{o} process:
    	\begin{displaymath}
    		u_t=u_0+\int_0^t \eta_sds+\int_0^t \zeta_s dB_s+K_t,
    	\end{displaymath}
    	where $\eta\in M_G^1(0,T)$, $\zeta\in H_G^1(0,T)$ and $K$ is a non-increasing $G$-martingale. Theorem \ref{the1.6} shows that the decomposition for generalized $G$-It\^{o} processes is unique.
    \end{remark}

	\section{$G$-BSDE with two reflection barriers}
	In this section, we give the formulation of the doubly reflected BSDE driven by $G$-Brownian motion. Particularly, the approximate Skorohod condition is introduced to guarantee the uniqueness of the solutions, which will be proved  via some a priori estimates given later.		
	\subsection{Formulation of doubly reflected BSDE driven by $G$-Brownian motion}
We formulate the doubly reflected BSDE driven by $G$-Brownian motion in details. For simplicity, we only consider the case of 1-dimensional $G$-Brownian motion. But our results and methods still hold for the case $d>1$. We are given the following data: the generators $f$ and $g$, the lower obstacle process $\{L_t\}_{t\in[0,T]}$, the upper obstacle process $\{U_t\}_{t\in[0,T]}$ and the terminal value $\xi$.

Here $f$ and $g$ are maps
	\begin{displaymath}
	f(t,\omega,y,z),g(t,\omega,y,z):[0,T]\times\Omega_T\times\mathbb{R}^2\rightarrow\mathbb{R}.
	\end{displaymath}
Below, we list the assumptions on the data of the doubly reflected $G$-BSDEs.

There exists some $\beta>2$ such that
	\begin{description}
		\item[(A1)] for any $y,z$, $f(\cdot,\cdot,y,z)$, $g(\cdot,\cdot,y,z)\in S_G^\beta(0,T)$;
		\item[(A2)] $|f(t,\omega,y,z)-f(t,\omega,y',z')|+|g(t,\omega,y,z)-g(t,\omega,y',z')|\leq \kappa(|y-y'|+|z-z'|)$ for some $\kappa>0$;
		\item[(A3)] $\{L_t\}_{t\in[0,T]}$, $\{U_t\}_{t\in[0,T]}\in S_G^\beta(0,T)$, $L_t\leq U_t$, $t\in[0,T]$, $q.s.$ and the upper obstacle is a generalized $G$-It\^{o} process of the following form
		\begin{displaymath}
		U_t=U_0+\int_0^t b(s)ds+\int_0^t \sigma(s)dB_s+K_t,
		\end{displaymath}
	where $\{b(t)\}_{t\in[0,T]},\{\sigma(t)\}_{t\in[0,T]}\in S_G^\beta(0,T)$, $K\in S_G^\beta(0,T)$ is a non-increasing $G$-martingale;
		\item[(A4)] $\xi\in L_G^\beta(\Omega_T)$ and $L_T\leq \xi\leq U_T$, $q.s.$
	\end{description}
	
	\begin{remark}
		Notice that the Assumptions (A1)-(A4) are quite similar to the ones in \cite{CK} since the non-increasing $G$-martingale $K$ is equal to $0$ when $G$ reduces to a linear function.
	\end{remark}
	
    We call a triple of processes $(Y,Z,A)$ with $Y, A\in S_G^\alpha(0,T)$, $Z\in H_G^\alpha(0,T)$, for some $2\leq\alpha\le\beta$,  a solution to the doubly reflected $G$-BSDE with the data $(\xi, f, g, L, U)$  if the following properties hold:
    \begin{description}
    	\item[(S1)] $L_t\leq Y_t\leq U_t$, $t\in[0,T]$;
	\item[(S2)] $Y_t=\xi+\int_t^T f(s,Y_s,Z_s)ds+\int_t^T g(s,Y_s,Z_s)d\langle B\rangle_s-\int_t^T Z_s dB_s+(A_T-A_t)$;	
	\item[(S3)] $(Y, A)$ satisfies  Approximate Skorohod Condition with order $\alpha$ $(\textmd{ASC}_{\alpha})$.
    \end{description}

\noindent \textbf{Condition} $(\textmd{ASC}_{\alpha})$: We say a pair of processes $(Y,  A)$ with $Y, A\in S_G^\alpha(0,T)$ satisfies the approximate Skorohod condition with order $\alpha$  (with respect to the obstacles $L, U$) if  there exist non-decreasing processes $\{A^{n,+}\}_{n\in\mathbb{N}}$, $\{A^{n,-}\}_{n\in\mathbb{N}}$ and  non-increasing $G$-martingales $\{K^n\}_{n\in\mathbb{N}}$, such that
    	\begin{itemize}
    		\item $\hat{\mathbb{E}}[|A_T^{n,+}|^\alpha+|A_T^{n,-}|^\alpha+|K^n_T|^\alpha]\leq C$, where $C$ is  independent of $n$;
    		\item $\hat{\mathbb{E}}[\sup\limits_{t\in[0,T]}|A_t-(A_t^{n,+}-A_t^{n,-}-K_t^n)|^\alpha]\rightarrow 0$, as $n\rightarrow\infty$;
    		\item  $\lim\limits_{n\rightarrow\infty}\hat{\mathbb{E}}[|\int_0^T (Y_s-L_s)d A_s^{n,+}|^{\alpha/2}]=0$;
    		\item  $\lim\limits_{n\rightarrow\infty}\hat{\mathbb{E}}[|\int_0^T (U_s-Y_s)d A_s^{n,-}|^{\alpha/2}]=0$.
    	\end{itemize}

Below is the main result of this paper, which gives the wellposedness of the doubly reflected $G$-BSDE.
	
\begin{theorem}\label{main}
		Suppose that $\xi$, $f$, $g$, $L$ and $U$ satisfy (A1)-(A4). Then the reflected $G$-BSDE with data $(\xi,f,g,L,U)$ has a unique solution $(Y,Z,A)$. Moreover, for any $2\leq \alpha<\beta$ we have $Y\in S^\alpha_G(0,T)$, $Z\in H_G^\alpha(0,T)$ and $A\in S_G^{\alpha}(0,T)$.
	\end{theorem}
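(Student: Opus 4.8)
The plan is to separate the two assertions: I would first prove uniqueness from a priori estimates, and then existence through the penalization scheme displayed in the introduction.

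For uniqueness, let $(Y^1,Z^1,A^1)$ and $(Y^2,Z^2,A^2)$ be two solutions and set $\hat Y=Y^1-Y^2$, $\hat Z=Z^1-Z^2$, $\hat A=A^1-A^2$. Subtracting the two instances of (S2) gives a $G$-BSDE for $\hat Y$ whose driver is the difference of the $f$'s and $g$'s, Lipschitz by (A2), and whose finite-variation part is $\hat A$. Applying the $G$-It\^o formula to $|\hat Y_t|^2$ and absorbing the driver differences through (A2), the whole difficulty concentrates in the term $\int_0^T\hat Y_s\,d\hat A_s$. Here I would replace each $A^i$ by its witnessing triple $A^{i,n,+}-A^{i,n,-}-K^{i,n}$ from $(\textmd{ASC}_\alpha)$ and let $n\to\infty$. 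Writing, for instance, $\int_0^T\hat Y_s\,dA^{1,n,+}_s=\int_0^T(Y^1_s-L_s)\,dA^{1,n,+}_s+\int_0^T(L_s-Y^2_s)\,dA^{1,n,+}_s$, the first integral vanishes in the limit by the third bullet of $(\textmd{ASC}_\alpha)$ while the second is non-positive since $Y^2\geq L$; the $A^-$ contributions are treated symmetrically using $Y\leq U$, and the $K^{i,n}$ contributions are controlled through the $G$-martingale property as in the one-obstacle case. Taking $\hat{\mathbb{E}}$ and closing with a Gronwall argument forces $\hat Y\equiv0$, whence $\hat Z=0$ by Theorem \ref{Esti-Z-GBSDE} and $\hat A=0$ from (S2).

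For existence I would start from the penalized $G$-BSDEs. Since the penalization terms $n(y-L_s)^--n(y-U_s)^+$ are Lipschitz in $y$, Theorem \ref{the1.4} yields unique solutions $(Y^n,Z^n,K^n)$, and I set $A^n=A^{n,+}-A^{n,-}-K^n$. In Stage 1 I would obtain uniform $S_G^\alpha$ bounds on $Y^n$ by sandwiching it, via the comparison theorem (Theorem \ref{the1.5}), between solutions of auxiliary ($G$-)BSDEs built from the generalized $G$-It\^o process separating $L$ and $U$, and then show $\|(Y^n-L)^-\|_{S_G^\alpha}\to0$ and $\|(Y^n-U)^+\|_{S_G^\alpha}\to0$. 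In Stage 2 I would upgrade the upper estimate to the explicit rate $1/n$ for $(Y^n-U)^+$ by comparing $Y^n$ with $U$ through the generalized $G$-It\^o decomposition in (A3); this immediately bounds $A^{n,-}_T=\int_0^T n(Y^n_s-U_s)^+\,ds$ in $L_G^\alpha$ uniformly in $n$, after which the uniform bounds on $A^{n,+}_T$, on $K^n_T$ (by rearranging (S2)) and on $Z^n$ in $H_G^\alpha$ (by Theorem \ref{Esti-Z-GBSDE}) follow. These estimates let me show that $(Y^n,Z^n,A^n)$ is Cauchy and pass to a limit $(Y,Z,A)$: property (S1) comes from Stage 1, (S2) from passing to the limit in the penalized equation, and (S3) is checked by taking the penalization sequences themselves as witnesses of $(\textmd{ASC}_\alpha)$, where the last two bullets follow from the elementary bound $\int_0^T(Y_s-L_s)\,dA^{n,+}_s\leq\sup_s|Y_s-Y^n_s|\cdot A^{n,+}_T$ combined with Cauchy--Schwarz and $\|Y-Y^n\|_{S_G^\alpha}\to0$.

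The step I expect to be the main obstacle is Stage 2: securing the uniform $L_G^\alpha$ bounds on the penalization masses $A^{n,\pm}_T$ and $K^n_T$ without the dominated convergence theorem and weak compactness used in \cite{CK}, both unavailable in the $G$-framework. In particular the sharp $1/n$ rate for $(Y^n-U)^+$ is delicate: it requires applying the $G$-It\^o formula to (a smoothing of) $((Y^n_t-U_t)^+)^2$, cancelling the $dB_s$ and $d\langle B\rangle_s$ parts of $Y^n$ against those of $U$ supplied by (A3), and exploiting the strong negative drift $-n(Y^n-U)^+$ to close the inequality; this is precisely where the non-increasing $G$-martingale in (A3) and the uniqueness of the generalized $G$-It\^o decomposition (Theorem \ref{the1.6}) enter.
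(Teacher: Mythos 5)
Your overall architecture coincides with the paper's: uniqueness via the a priori estimate of Proposition \ref{lem0} (your splitting $\int\hat Y\,dA^{1,n,+}=\int(Y^1-L)\,dA^{1,n,+}+\int(L-Y^2)\,dA^{1,n,+}$ is exactly the paper's observation $(\hat Y)^+\le Y^1-L$, and the $K^{i,n}$ terms are absorbed into a $G$-martingale as in the paper); existence via penalization, uniform estimates, the $1/n$ rate for $(Y^n-U)^+$, Cauchy property, and verification of $(\textmd{ASC}_\alpha)$ with the penalization sequences themselves, including the bound $\int_0^T(Y_s-L_s)\,dA^{n,+}_s\le\sup_s|Y_s-Y^n_s|\,A^{n,+}_T$, which is verbatim the paper's final step. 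Two minor deviations: for $\hat Z=0$ you cannot invoke Theorem \ref{Esti-Z-GBSDE} as stated, since its hypotheses require \emph{both} finite-variation parts to be decreasing, while $\hat A=A^1-A^2$ is not; but since $\hat Y\equiv0$ the paper's one-line It\^o argument on $|\hat Y|^2\equiv0$ settles it. For Stage 1 the paper does not sandwich via the comparison theorem; it shifts by the separating It\^o process $I$ and applies It\^o to $|Y^n-I|^\alpha e^{rt}$, using the signs $-\bar Y(\bar Y-\bar U)^+\le0$ and $\bar Y(\bar Y-\bar L)^-\le0$ to kill both penalty terms (Lemma \ref{Esti-Y}) — self-contained and valid under the weaker (A1'), (A3'), whereas your sandwich leans on the single-obstacle results of \cite{LPSH} and \cite{lp}.

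The genuine gap is your mechanism for Stage 2. Applying It\^o to $((Y^n_t-U_t)^+)^2$ and exploiting the drift $-n(Y^n-U)^+$ cannot produce the rate $1/n$: with $\delta=Y^n-U$, the driver contribution $2\int\delta_s^+\bigl(|f(s,Y^n_s,Z^n_s)|+|b(s)|\bigr)ds$ can only be absorbed into the good term $2n\int(\delta^+_s)^2ds$ via Young's inequality $2\delta^+h\le n(\delta^+)^2+\frac{1}{n}h^2$, which yields $\|(Y^n-U)^+\|_{S^\alpha_G}=O(n^{-1/2})$, not $O(n^{-1})$. That is fatal, because $\hat{\mathbb{E}}[|A^{n,-}_T|^\alpha]=n^\alpha\hat{\mathbb{E}}[(\int_0^T\delta^+_s\,ds)^\alpha]$ then only admits a bound of order $n^{\alpha/2}\to\infty$, and the whole of Stage 2 — uniform bounds on $A^{n,\pm}_T$, $K^n_T$ and $Z^n$, hence the Cauchy argument — collapses. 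Moreover, since $f$ depends on $Z^n$ and the uniform $H^\alpha_G$ bound on $Z^n$ is itself a Stage-2 output (via Theorem \ref{Esti-Z-GBSDE}, which needs the bound on $A^{n,-}_T$ first), even the $n^{-1/2}$ estimate is circular unless the $Z$-dependence is removed. The paper's actual device is linear rather than quadratic: linearize $f(s,Y^n_s,Z^n_s)=f(s,Y^n_s,0)+a^{\varepsilon,n}_sZ^n_s+m^{\varepsilon,n}_s$ and perform a Girsanov change to $\tilde{\mathbb{E}}^{\varepsilon,n}$, under which the driver no longer contains $Z^n$ and $K^n$ remains a martingale; then the discounted identity of Lemma \ref{Esti-U-L} gives $(Y^n_t-U_t)^+\le\bigl|\tilde{\mathbb{E}}^{\varepsilon,n}_t[\int_t^Te^{-n(s-t)}(f^{\varepsilon,n}(s)+b^{\varepsilon,n}(s))ds]\bigr|\le\frac{1}{n}\,\tilde{\mathbb{E}}^{\varepsilon,n}_t[\sup_s|f^{\varepsilon,n}(s)+b^{\varepsilon,n}(s)|]$, where (A3) supplies $dU_s=b(s)ds+\sigma(s)dB_s+dK_s$, the non-increasing $K$ is simply dropped, and $\int e^{-ns}dK^n_s$ enters with a nonnegative integrand so its conditional expectation vanishes (Lemma \ref{Conv-rate-A}); the same representation, combined with the density Lemma \ref{Esti-D}, also drives the Stage-1 convergence (Lemma \ref{Conv-U-L}) for a general $U\in S^\beta_G$. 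So, contrary to your closing sentence, Theorem \ref{the1.6} is not the source of the rate; the Girsanov linearization and the $e^{-n(s-t)}$-representation are, and this idea is missing from your proposal.
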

    \begin{remark}

Recall that, in the classical case (see \cite{CK}),  the Skorohod condition below is required to guarantee the uniqueness of the solution $(Y, Z,  A)$ to the doubly reflected BSDE with parameters $(\xi,f,L,U)$:
	     $\int_0^T (Y_s-L_s)dA^+_s=\int_0^T (U_s-Y_s)dA^-_s=0$,  where $A^+$, $A^-$ are two non-decreasing processes and $A=A^+-A^-$.
		
Therefore, a more natural definition of the solution to the $G$-RBSDE $(\xi,f,g,L,U)$ is a triple of processes $(Y,Z, A)$ satisfying (Si), (Sii) and the following Skorohod condition.

\noindent \textbf{Condition} $(\textmd{SC})$: The process $A$ is decomposed as $A=\tilde {A}-K$ with $\tilde {A}$ a finite variation process  and  $K$ a non-increasing $G$-martingale, such that

    		\[\int_0^T (Y_s-L_s)d \tilde{A}_s^{+}=\int_0^T (U_s-Y_s)d \tilde{A}_s^{-}=0,\]
    		where $\tilde{A}^+$, $\tilde{A}^-$ are two non-decreasing processes and $A=A^+-A^-$.
  \end{remark}

Since the Skorohod condition  is stronger than the approximate  Skorohod condition, it follows from Theorem $\ref {main}$ that the solution satisfying Condition (SC) is unique. The existence of the solutions satisfying Condition (SC) is equivalent to prove the decomposition of the process $A$ in Theorem \ref {main} :
\begin {center}
$A=\tilde {A}-K,$  where $\tilde {A}$ a finite variation process satisfying the Skorohod condition and \\  $K$ a non-increasing $G$-martingale.
\end {center}
The existence and uniqueness of this decomposition are both interesting problems, which will be considered in future.

    \begin{remark}
    	Suppose that $U\equiv \infty$, i.e., the doubly reflected $G$-BSDE is reduced to the reflected $G$-BSDE with a lower obstacle. We can show that $A\in S_G^\alpha(0,T)$ is non-decreasing and satisfies the martingale condition, that is, $\{-\int_0^t (Y_s-L_s)dA_s\}_{t\in[0,T]}$ is a non-increasing $G$-martingale, which is the definition of solution to reflected $G$-BSDE with a lower obstacle (see \cite{LPSH}).
    	
     In fact,	let $\{A^{n,+}\}_{n\in\mathbb{N}}$, $\{A^{n,-}\}_{n\in\mathbb{N}}$ and $\{K^n\}_{n\in\mathbb{N}}$ be the approximation sequences for $A$. It is clear that $A^{n,-}\equiv 0$ for any $n\in\mathbb{N}$.  Note that $\{A^{n,+}-K^n\}$ is non-decreasing and
    	\begin{displaymath}
    	\lim_{n\rightarrow\infty}\hat{\mathbb{E}}[\sup_{t\in[0,T]}|A_t-(A_t^{n,+}-K_t^n)|^\alpha]=0,
    	\end{displaymath}
    	then $A$ is non-decreasing. Since $Y\leq L$ and $K^n$ is a non-increasing $G$-martingale, it follows that $\{-\int_0^t (Y_s-L_s)dK^n_s\}_{t\in[0,T]}$ is a non-increasing $G$-martingale for any $n\in\mathbb{N}$. It suffices to show that
    	\begin{displaymath}
    	\lim_{n\rightarrow\infty}\hat{\mathbb{E}}[\sup_{t\in[0,T]}|-\int_0^t (Y_s-L_s)dA_s-\int_0^T(Y_s-L_s)dK_s^n|]=0.
    	\end{displaymath}
    	It is easy to check that
    	\begin{align*}
    	&\hat{\mathbb{E}}[\sup_{t\in[0,T]}|-\int_0^t (Y_s-L_s)dA_s-\int_0^T(Y_s-L_s)dK_s^n|]\\
    	\leq &\hat{\mathbb{E}}[\sup_{t\in[0,T]}|\int_0^t (Y_s-L_s)d(A_s-\tilde{A}_s^{n})|]+\hat{\mathbb{E}}[\sup_{t\in[0,T]}|\int_0^t (Y_s-L_s)dA_s^{n,+}|],
    	\end{align*}
    	where $\tilde{A}^n=A^{n,+}-K^n$. Applying Lemma \ref{l1} below yields the desired result.
    	
    	By a similar analysis as above, if $L\equiv -\infty$, the definition of solution to doubly reflected $G$-BSDE can be reduced to the one of the upper obstacle case studied in \cite{lp}.
    \end{remark}

    	\begin{remark} For some results, we will replace the Assumptions (A1), (A3) by the following weaker ones.
    		\begin{description}
    			\item[(A1')] For any $y,z$, $f(\cdot,\cdot,y,z)$, $g(\cdot,\cdot,y,z)\in M_G^\beta(0,T)$;
    			\item[(A3')] $\{L_t\}_{t\in[0,T]}$, $\{U_t\}_{t\in[0,T]}\in S_G^\beta(0,T)$, $L_t\leq U_t$, $t\in[0,T]$, $q.s.$ and there exists a generalized $G$-It\^{o} process $I$ such that $L\le I\le U$,  where
    			\[I_t=I_0+\int_0^t b^I(s)ds+\int_0^t \sigma^I(s)dB_s+K^I_t,\]
    			with $b^I\in M_G^\beta(0,T), \sigma^I\in H_G^\beta(0,T)$, $K^I_0=0$ and  $K^I\in S_G^\beta(0,T)$ a non-increasing $G$-martingale.
    		\end{description}
    	\end{remark}

     \begin{remark}
    	 	Since the generator $g$ plays the same role as $f$, in the following of this paper, we only consider the case that $g=0$.
    \end{remark}

 \subsection{Some a priori estimates}

    In this subsetion, we give a priori estimate for the solution of the reflected $G$-BSDE, which implies the uniqueness of the solution to doubly reflected $G$-BSDE. In the following of this paper, we denote by $C$ a constant depending on $\alpha, T, \kappa,\underline{\sigma}$, but not on $n$, which may vary from line to line.

      Let us denote by $Var_0^T(A)$ the total variation of a process $A$ on $[0,T]$. We first introduce the following lemma.
    	\begin{lemma}\label{l1}
    		For $\alpha>1$, let $A$, $\{A^n\}_{n\in\mathbb{N}}\subset S_G^\alpha(0,T)$ be processes such that  $\hat{\mathbb{E}}[|Var_0^T(A^n)|^\alpha]\leq C$ and
    		\begin{displaymath}
    		\lim_{n\rightarrow\infty}\hat{\mathbb{E}}[\sup_{t\in[0,T]}|A_t-A_t^n|^\alpha]=0,
    		\end{displaymath}
    		where $C$ is independent of $n$. Then, we have $\hat{\mathbb{E}}[|Var_0^T(A)|^\alpha]\leq C$. Moreover, if $Y\in S_G^{p}(0,T)$, with $p=\frac{\alpha}{\alpha-1}$, we have \[\lim_{n\rightarrow\infty}\hat{\mathbb{E}}[\sup_{t\in[0,T]}|\int_0^t Y_sd(A_s-A_s^n)|]=0.\]
    \end{lemma}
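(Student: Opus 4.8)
The plan is to treat the two assertions separately, first the variation bound and then the integral convergence, the latter using the former.

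\textbf{Bounding the total variation of $A$.} I would prove $\hat{\mathbb{E}}[|Var_0^T(A)|^\alpha]\leq C$ by exploiting the lower semicontinuity of total variation under uniform convergence. For a fixed partition $\pi:0=t_0<\cdots<t_M=T$, the functional $V_\pi(A):=\sum_i|A_{t_{i+1}}-A_{t_i}|$ depends only on the finitely many values $A_{t_i}$ and is Lipschitz in them; since $\sup_t|A_t-A_t^n|\to 0$ in $L_G^\alpha$ forces $A^n_{t_i}\to A_{t_i}$ in $L_G^\alpha$, one gets $V_\pi(A^n)\to V_\pi(A)$ in $L_G^\alpha$, whence $\hat{\mathbb{E}}[|V_\pi(A)|^\alpha]=\lim_n\hat{\mathbb{E}}[|V_\pi(A^n)|^\alpha]\leq\sup_n\hat{\mathbb{E}}[|Var_0^T(A^n)|^\alpha]\leq C$. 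Taking a refining sequence of partitions $\pi_m$ so that $V_{\pi_m}(A)\uparrow Var_0^T(A)$ q.s. (using that $A$ has continuous paths), Fatou's lemma for the sublinear expectation $\hat{\mathbb{E}}$ (valid since $\hat{\mathbb{E}}=\sup_{P\in\mathcal P}E_P$ and $\sup\liminf\leq\liminf\sup$) yields $\hat{\mathbb{E}}[|Var_0^T(A)|^\alpha]\leq\liminf_m\hat{\mathbb{E}}[|V_{\pi_m}(A)|^\alpha]\leq C$.

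\textbf{The integral convergence.} For the second claim I would combine an approximation of $Y$ with the variation bound just obtained. The guiding decomposition, for any approximant $\tilde Y$, is
\[
\int_0^t Y_s\,d(A_s-A_s^n)=\int_0^t(Y_s-\tilde Y_s)\,d(A_s-A_s^n)+\int_0^t\tilde Y_s\,d(A_s-A_s^n).
\]
The first term is dominated pathwise by $\sup_s|Y_s-\tilde Y_s|\cdot(Var_0^T(A)+Var_0^T(A^n))$, and Hölder's inequality with the conjugate exponents $p,\alpha$ (note $1/p+1/\alpha=1$) bounds its $\hat{\mathbb{E}}$-norm by $\|Y-\tilde Y\|_{S_G^p}\cdot(\hat{\mathbb{E}}[(Var_0^T(A)+Var_0^T(A^n))^\alpha])^{1/\alpha}$; by the variation bound and the hypothesis the second factor is bounded uniformly in $n$, so choosing $\tilde Y\in S_G^0$ close to $Y$ makes this term uniformly small. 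It then remains to show, for fixed continuous $\tilde Y\in S_G^0$, that $\hat{\mathbb{E}}[\sup_t|\int_0^t\tilde Y_s\,d(A_s-A_s^n)|]\to 0$. Here I would discretize $\tilde Y$ in time: replacing $\tilde Y$ by its left-endpoint step version $\tilde Y^\pi$ over a partition $\pi$ turns the integral into the finite sum $\sum_i\tilde Y_{s_i}[(A-A^n)_{s_{i+1}\wedge t}-(A-A^n)_{s_i\wedge t}]$, whose supremum over $t$ is controlled by $2M\|\tilde Y\|_\infty\sup_t|A_t-A_t^n|$ and hence tends to $0$ in $\hat{\mathbb{E}}$ as $n\to\infty$ for fixed $\pi$. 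The discretization error is again bounded by the modulus of continuity $w(\tilde Y;|\pi|)$ times $(Var_0^T(A)+Var_0^T(A^n))$; since $\tilde Y$ is bounded and uniformly continuous q.s., $w(\tilde Y;|\pi|)\downarrow 0$ q.s. as the mesh shrinks, and a Dini-type dominated convergence for $\hat{\mathbb{E}}$ (via the weak compactness of $\mathcal P$) gives $\hat{\mathbb{E}}[w(\tilde Y;|\pi|)^p]\to 0$. A standard three-$\varepsilon$ argument assembling these estimates closes the proof.

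\textbf{Main obstacle.} The delicate point is the second assertion. Because $Y$ is only a continuous process in $S_G^p$ and carries no bounded-variation structure, integration by parts against $d(A-A^n)$ is unavailable, so the convergence cannot be read off directly from $\sup_t|A_t-A_t^n|\to 0$. The crux is to trade the lack of regularity of $Y$ against the uniform variation bound from the first part through Hölder's inequality, and to control the time-discretization error of the regular approximant by a Dini/dominated-convergence argument valid under $\hat{\mathbb{E}}$ — these are precisely the places where the $G$-expectation framework (weak compactness of $\mathcal P$, Fatou and dominated convergence for the sublinear expectation) must replace the classical tools.
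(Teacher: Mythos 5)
Your proof is correct and follows essentially the same route as the paper's: the variation bound via lower semicontinuity of the variation under uniform convergence (the paper phrases it dually, writing $Var_0^T(A)=\sup_{a\in\mathcal{A}}\int_0^T a(s)\,dA_s$ over step functions with $|a|=1$ and passing to a q.s.-convergent subsequence, where you use finite partition sums and Fatou — equivalent arguments), and the integral convergence via the identical splitting into a step-process part controlled by $\sup_{t\in[0,T]}|A_t-A_t^n|$ and an approximation error controlled by H\"older against the uniform variation bounds, with the same order of limits ($n\to\infty$ first, then the mesh). The only divergence is the discretization error: the paper discretizes $Y$ itself and simply cites Lemma 3.2 of \cite{HJPS1} for $\hat{\mathbb{E}}[\sup_{t\in[0,T]}|Y_t-\widetilde{Y}_t^m|^p]\to 0$, whereas you re-derive this via $S_G^0$-density plus a modulus-of-continuity argument — which is fine, provided your ``Dini-type'' step invokes only the downward monotone convergence of \emph{continuous} bounded random variables under $\hat{\mathbb{E}}$ (valid by the weak compactness of $\mathcal{P}$), since a general dominated convergence theorem fails in this framework.
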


    \begin{proof}
    	    We first show that $A$ is a finite variation process. Let
    		\begin{displaymath}
    		\mathcal{A}=\{\sum_{i=1}^{n-1} a_i I_{(t_i,t_{i+1}]}(s)||a_i|=1, 0\leq t_0<\cdots<t_n=T,n\in\mathbb{N}\}.
    		\end{displaymath}
    		Since $\sup_{t\in[0,T]}|A_t-A_t^n|$ converges to $0$ under the norm $\|\cdot\|_{L_G^1}$, we may choose a subsequence, still denoted by $A^n$, such that $\sup_{t\in[0,T]}|A_t-A_t^n|$ converges to $0$, q.s. It follows that, for any $a\in\mathcal{A}$
    		\begin{displaymath}
    		\lim_{n\rightarrow\infty}\int_0^T a(s)dA_s^n=\int_0^T a(s)dA_s.
    		\end{displaymath}
    		Then we have
    		\begin{align*}
    			Var_0^T(A)&=\sup_{a\in\mathcal{A}}\int_0^T a(s)dA_s=\sup_{a\in\mathcal{A}}\liminf_n\int_0^T a(s)dA_s^n\\
    			&\leq \liminf_n\sup_{a\in\mathcal{A}}\int_0^T a(s)dA_s^n=\liminf_n Var_0^T(A^n).
    		\end{align*}
    		Hence, it follows from the assumption that $\hat{\mathbb{E}}[|Var_0^T(A)|^\alpha]\leq C$. It remains to prove that for any $Y\in S_G^{p}(0,T)$, with $p=\frac{\alpha}{\alpha-1}$, we have \[\lim_{n\rightarrow\infty}\hat{\mathbb{E}}[\sup_{t\in[0,T]}|\int_0^t Y_sd(A_s-A_s^n)|]=0.\]	In fact, for each $m\in\mathbb{N}$, let $\widetilde{Y}^m_t=\sum_{i=0}^{m-1}Y_{t_i^m}I_{[t_i^m,t_{i+1}^m}(t)$, where $t_i^m=\frac{iT}{m}$, $i=0,1,\cdots,m$. Set
    		\begin{displaymath}
    		\textbf{I}=\sup_{t\in[0,T]}|\int_0^t \widetilde{Y}^m_s d(A_s-A_s^n)|,\  \textbf{II}=\sup_{t\in[0,T]}|\int_0^t (Y_s-\widetilde{Y}_s^n)d(A_s-A_s^n)|.
    		\end{displaymath}
    		By simple calculation, we have
    		\begin{align*}
    		\hat{\mathbb{E}}[\textbf{I}]
    		\leq& \sum_{i=0}^{m-1}\hat{\mathbb{E}}[\sup_{s\in[0,T]}|Y_s|(|A^n_{t_{i+1}^m}-A_{t_{i+1}^m}|+|A^n_{t_{i}^m}-A_{t_{i}^m}|)]\\
    		\leq &(\hat{\mathbb{E}}[\sup_{s\in[0,T]}|Y_s|^p])^{1/p}
    		\sum_{i=0}^{m-1}\{(\hat{\mathbb{E}}[|A^n_{t_{i+1}^m}-A_{t_{i+1}^m}|^\alpha])^{1/\alpha}+(\hat{\mathbb{E}}[|A^n_{t_{i}^m}-A_{t_{i}^m}|^\alpha])^{1/\alpha}\},\\
    		\hat{\mathbb{E}}[\textbf{II}]
    		\leq& (\hat{\mathbb{E}}[\sup_{s\in[0,T]}|Y_s-\widetilde{Y}_s^m|^p])^{1/p}\{
    		(\hat{\mathbb{E}}[|Var_0^T(A^n)|^\alpha])^{1/\alpha}+(\hat{\mathbb{E}}[|Var_0^T(A)|^\alpha])^{1/\alpha}\}.
    		\end{align*}
    		Letting $n$ tend to infinity yields that $\hat{\mathbb{E}}[I]\rightarrow 0$, for any $m\in\mathbb{N}$. Then, letting $m$ approach to infinity, we obtain that $\hat{\mathbb{E}}[II]\rightarrow 0$ by Lemma 3.2 in \cite{HJPS1}. The proof is complete.
    \end{proof}

	\begin{proposition}\label{lem0}
		Let $(\xi^1,f^1,L,U)$ and $(\xi^2,f^2,L,U)$ be two sets of data each one satisfying all the assumptions (A1)-(A4). Let $(Y^i,Z^i,A^i)$ be a solution of the reflected $G$-BSDE with data $(\xi^i,f^i,L,U)$, $i=1,2$, respectively. Set $\hat{Y}_t=Y^1_t-Y^2_t$,  $\hat{\xi}=\xi^1-\xi^2$. Then there exists a constant $C:=C(\alpha,T, \kappa,\underline{\sigma})>0$ such that
		\begin{displaymath}
		|\hat{Y}_t|^\alpha\leq C\hat{\mathbb{E}}_t[|\hat{\xi}|^\alpha+\int_t^T|\hat{\lambda}_s|^\alpha ds],
		\end{displaymath}
		where $\hat{\lambda}_s=|f^1(s,Y_s^2,Z_s^2)-f^2(s,Y_s^2,Z_s^2)|$.
	\end{proposition}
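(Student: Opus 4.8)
The plan is to write the equation for the difference $\hat Y=Y^1-Y^2$, apply the $G$-It\^o formula to $|\hat Y_t|^\alpha$, take the conditional $G$-expectation, and close with a Gronwall argument; all the work is concentrated in showing that the term produced by $\hat A:=A^1-A^2$ is negligible. Subtracting the two equations (S2) gives
\[
\hat Y_t=\hat\xi+\int_t^T \Delta f_s\,ds-\int_t^T \hat Z_s\,dB_s+(\hat A_T-\hat A_t),
\]
with $\hat Z=Z^1-Z^2$ and $\Delta f_s=f^1(s,Y^1_s,Z^1_s)-f^2(s,Y^2_s,Z^2_s)$. Splitting $\Delta f_s=[f^1(s,Y^1_s,Z^1_s)-f^1(s,Y^2_s,Z^2_s)]+[f^1(s,Y^2_s,Z^2_s)-f^2(s,Y^2_s,Z^2_s)]$ and using (A2), the first bracket is dominated by $\kappa(|\hat Y_s|+|\hat Z_s|)$ and the second has modulus $\hat\lambda_s$.

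Writing $\phi(y)=|y|^{\alpha-2}y$, the $G$-It\^o formula (admissible since $\alpha\ge 2$ makes $x\mapsto|x|^\alpha$ of class $C^2$) yields
\[
|\hat Y_t|^\alpha+\tfrac{\alpha(\alpha-1)}{2}\int_t^T|\hat Y_s|^{\alpha-2}|\hat Z_s|^2\,d\langle B\rangle_s=|\hat\xi|^\alpha+\alpha\int_t^T\phi(\hat Y_s)\Delta f_s\,ds-\alpha\int_t^T\phi(\hat Y_s)\hat Z_s\,dB_s+\alpha\int_t^T\phi(\hat Y_s)\,d\hat A_s.
\]
I would bound the generator term by $\alpha|\hat Y_s|^{\alpha-1}(\kappa|\hat Y_s|+\kappa|\hat Z_s|+\hat\lambda_s)$, absorb the $|\hat Y_s|^{\alpha-1}|\hat Z_s|$ contribution into the quadratic term on the left by Young's inequality (here the non-degeneracy $\underline\sigma^2>0$, i.e. $d\langle B\rangle_s\ge\underline\sigma^2\,ds$, is essential), and apply Young once more to $|\hat Y_s|^{\alpha-1}\hat\lambda_s$. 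Taking $\hat{\mathbb E}_t$, discarding the remaining nonnegative quadratic term, and killing the symmetric $G$-martingale $\int\phi(\hat Y_s)\hat Z_s\,dB_s$ (its conditional $G$-expectation is $0$, combined with subadditivity of $\hat{\mathbb E}_t$), I reach
\[
|\hat Y_t|^\alpha\le\hat{\mathbb E}_t\Big[|\hat\xi|^\alpha+C\int_t^T|\hat Y_s|^\alpha\,ds+C\int_t^T\hat\lambda_s^\alpha\,ds+\alpha\int_t^T\phi(\hat Y_s)\,d\hat A_s\Big],
\]
so that the stated bound follows from the Gronwall-type lemma for conditional $G$-expectations in \cite{HJPS1}, \emph{provided} the reflection term is shown to be negligible.

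Controlling $\int_t^T\phi(\hat Y_s)\,d\hat A_s$ is the heart of the matter and is where the approximate Skorohod condition enters. For each solution I invoke $(\textrm{ASC}_\alpha)$ to get sequences with $A^{i,n,+}-A^{i,n,-}-K^{i,n}\to A^i$ in $S_G^\alpha$, and use Lemma \ref{l1} (applicable because $\phi(\hat Y)=|\hat Y|^{\alpha-1}\mathrm{sgn}(\hat Y)\in S_G^{\alpha/(\alpha-1)}$ and the approximants have uniformly bounded total variation) to write $\int\phi(\hat Y)\,d\hat A$ as the limit of $\int\phi(\hat Y)\,d[(A^{1,n,+}-A^{1,n,-}-K^{1,n})-(A^{2,n,+}-A^{2,n,-}-K^{2,n})]$. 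Since the two solutions share the obstacles $L\le Y^i\le U$ and $\phi$ is nondecreasing with $\phi(0)=0$, each barrier term has a favorable sign up to a vanishing remainder: for instance $\hat Y_s\le Y^1_s-L_s$ (as $Y^2\ge L$) gives $\phi(\hat Y_s)\le\phi(Y^1_s-L_s)=(Y^1_s-L_s)^{\alpha-1}$ pointwise, hence $\int\phi(\hat Y)\,dA^{1,n,+}\le\int(Y^1-L)^{\alpha-1}\,dA^{1,n,+}$, which tends to $0$ after estimating $(Y^1-L)^{\alpha-2}$ by its supremum (finite in $S_G^\beta$, $\beta>\alpha$) and applying the third bullet of $(\textrm{ASC}_\alpha)$ via H\"older's inequality (trivial when $\alpha=2$); the terms in $A^{1,n,-}$, $A^{2,n,+}$, $A^{2,n,-}$ are handled symmetrically using $Y^i\le U$ and $Y^i\ge L$.

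The main obstacle is the surviving contribution of the martingale parts, $-\alpha\int_t^T\phi(\hat Y_s)\,d(K^{1,n}_s-K^{2,n}_s)$. Here $K^{1,n}-K^{2,n}$ is a difference of non-increasing $G$-martingales, so it is neither monotone nor sign-definite, and $(\textrm{ASC}_\alpha)$ supplies only the uniform bound $\hat{\mathbb E}[|K^{i,n}_T|^\alpha]\le C$; a crude estimate by $\sup_s|\hat Y_s|^{\alpha-1}|K^{i,n}_T|$ leaves a fixed constant that does not vanish and would wreck the estimate (in particular it would spoil uniqueness, where $\hat\xi=0$ and $\hat\lambda=0$). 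I expect this to be the delicate step mentioned in the introduction. One natural route is to split $\phi(\hat Y_s)=\phi(\hat Y_t)+(\phi(\hat Y_s)-\phi(\hat Y_t))$, dispose of the $\mathcal F_t$-measurable part using the martingale identity $\hat{\mathbb E}_t[K^{i,n}_T-K^{i,n}_t]=0$, and control the increment $\phi(\hat Y_s)-\phi(\hat Y_t)$ against the uniformly bounded variation of $K^{i,n}$ by combining the $G$-martingale structure with the Doob and Burkholder--Davis--Gundy inequalities (Theorems \ref{the1.2} and \ref{the1.3}), so that the whole $K$-contribution is absorbed into the left-hand side and the $\hat\lambda$-term up to a vanishing error. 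Once this is done, letting $n\to\infty$ shows the reflection term is nonpositive modulo $o(1)$, and the Gronwall step yields $|\hat Y_t|^\alpha\le C\hat{\mathbb E}_t[|\hat\xi|^\alpha+\int_t^T\hat\lambda_s^\alpha\,ds]$.
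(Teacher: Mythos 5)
Your overall scaffold (It\^o formula on a power of $\hat Y$, Lipschitz splitting plus Young, Lemma \ref{l1} to pass from $A^i$ to the approximating sequences, and the sign bound $(\hat Y_s)^+\le Y^1_s-L_s$ combined with the third and fourth bullets of $(\textmd{ASC}_\alpha)$ and H\"older to kill the $A^{i,n,\pm}$ terms) coincides with the paper's proof, which works with $H_t^{\alpha/2}e^{rt}$, $H_t=|\hat Y_t|^2$, and chooses $r$ large instead of invoking a Gronwall lemma --- an immaterial difference. But at the one step you yourself flag as the heart of the matter, the contribution of $K^{1,n}$ and $K^{2,n}$, your proposal has a genuine gap: the route you sketch (split $\phi(\hat Y_s)=\phi(\hat Y_t)+(\phi(\hat Y_s)-\phi(\hat Y_t))$, use $\hat{\mathbb{E}}_t[K^{i,n}_T-K^{i,n}_t]=0$ for the frozen part, and control the increment by Doob/BDG) does not work. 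Under a sublinear expectation you cannot dispose of the frozen part unless its sign is favorable: $\hat{\mathbb{E}}_t[\eta\,(K_T-K_t)]=0$ for a decreasing $G$-martingale $K$ requires $\eta\ge 0$; for $\eta<0$ one only has $\hat{\mathbb{E}}_t[\eta(K_T-K_t)]\ge 0$, since $-K$ is not a $G$-martingale. And no Doob/BDG estimate on the increment can help, because $(\textmd{ASC}_\alpha)$ gives only a uniform bound on $\hat{\mathbb{E}}[|K^{i,n}_T|^\alpha]$, not smallness, so that route leaves exactly the non-vanishing constant you worry about.

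The missing idea is much simpler and is the same sign-splitting you already used for the $A^{i,n,\pm}$ terms, extended to the $K$ parts. Write $\hat Y_s=(\hat Y_s)^+-(\hat Y_s)^-$ in $-\hat Y_s\,dK^{1,n}_s$ and $+\hat Y_s\,dK^{2,n}_s$: the pieces $(\hat Y_s)^-\,dK^{1,n}_s$ and $(\hat Y_s)^+\,dK^{2,n}_s$ are nonpositive and can be discarded, while the remaining pieces have \emph{nonnegative} integrands against decreasing $G$-martingales, so by Lemma 3.4 in \cite{HJPS1} the process
\begin{equation*}
M^n_t=\int_0^t\alpha e^{rs}H_s^{\alpha/2-1}\bigl(\hat{Y}_s\hat{Z}_s\,dB_s+(\hat{Y}_s)^{+}\,dK^{1,n}_s+(\hat{Y}_s)^-\,dK_s^{2,n}\bigr)
\end{equation*}
is a genuine $G$-martingale, and taking $\hat{\mathbb{E}}_t$ annihilates it together with the stochastic integral. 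This is precisely how the paper closes the estimate: after this absorption, the remaining terms are the $\hat\lambda$ term, the $A^i-A^{i,n}$ errors (Lemma \ref{l1}) and the four obstacle cross-terms (handled as you did), all of which vanish as $n\to\infty$, giving $|\hat Y_t|^\alpha\le C\hat{\mathbb{E}}_t[|\hat\xi|^\alpha+\int_t^T|\hat\lambda_s|^\alpha ds]$. So your proof is incomplete as it stands: the step you left as a conjecture fails along the path you propose, and it is resolved not by a delicate increment estimate but by keeping the $K^{i,n}$ integrals attached to the correctly-signed parts of $\hat Y$ so that they stay inside a single $G$-martingale.
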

	
	\begin{proof}
		Set $\hat{Z}_t=Z_t^1-Z_t^2$, $\hat{A}_t=A_t^1-A_t^2$.  By the $G$-It\^{o} formula, we have
		\begin{displaymath}
			d|\hat{Y}_t|^2=-2\hat{Y}_t(f^1(t,Y_t^1,Z_t^1)-f^2(t,Y_t^2,Z_t^2))dt+2\hat{Y}_t\hat{Z}_tdB_t+\hat{Z}_t^2 d\langle B\rangle_t-2\hat{Y}_td\hat{A}_t.
		\end{displaymath}
		 For any $r>0$, applying $G$-It\^{o}'s formula to $H_t^{\alpha/2}e^{rt}=(|\hat{Y}_t|^2)^{\alpha/2} e^{rt}$, we have
		\begin{equation}\label{dr}
		\begin{split}
		&\quad H_t^{\alpha/2}e^{rt}+\int_t^T re^{rs}H_s^{\alpha/2}ds+\int_t^T \frac{\alpha}{2} e^{rs}
		H_s^{\alpha/2-1}(\hat{Z}_s)^2d\langle B\rangle_s\\
		&=|\hat{\xi}|^\alpha e^{rT}+
		\alpha(1-\frac{\alpha}{2})\int_t^Te^{rs}H_s^{\alpha/2-2}(\hat{Y}_s)^2(\hat{Z}_s)^2d\langle B\rangle_s-\int_t^T\alpha e^{rs}H_s^{\alpha/2-1}\hat{Y}_s\hat{Z}_sdB_s\\
		&\quad+\int_t^T{\alpha} e^{rs}H_s^{\alpha/2-1}\hat{Y}_s(f^1(s,Y_s^1,Z_s^1)-f^2(s,Y_s^2,Z_s^2))ds +\int_t^T\alpha e^{rs}H_s^{\alpha/2-1}\hat{Y}_sd\hat{A}_s.
		\end{split}\end{equation}
		From the assumption of $f^1$, we have
		\begin{align*}
		&\int_t^T{\alpha} e^{rs}H_s^{\alpha/2-1}\hat{Y}_s(f^1(s,Y_s^1,Z_s^1)-f^2(s,Y_s^2,Z_s^2))ds\\
		\leq &\int_t^T{\alpha}e^{rs}H_s^{\frac{\alpha-1}{2}}\{|f^1(s,Y_s^1,Z_s^1)-f^1(s,Y_s^2,Z_s^2)|+\hat{\lambda}_s\}ds\\
		\leq &\int_t^T{\alpha}e^{rs}H_s^{\frac{\alpha-1}{2}}\{\kappa(|\hat{Y}_s|+|\hat{Z}_s|)+\hat{\lambda}_s\}ds\\
		\leq &\tilde{r}\int_t^T e^{rs}H_s^{\alpha/2}ds
		+\frac{\alpha(\alpha-1)}{4}\int_t^Te^{rs}H_s^{\alpha/2-1}(\hat{Z}_s)^2d\langle B\rangle_s\\ &+ \int_t^T{\alpha} e^{rs}H_s^{\alpha/2-1/2}|\hat{\lambda}_s|ds,
		\end{align*}
		where $\tilde{r}=\alpha \kappa+\frac{\alpha \kappa^2}{\underline{\sigma}^2(\alpha-1)}$. Then by Young's inequality, we obtain
		\begin{displaymath}
		\int_t^T{\alpha} e^{rs}H_s^{\alpha/2-1/2}|\hat{\lambda}_s|ds
		\leq (\alpha-1)\int_t^T  e^{rs}H_s^{\alpha/2}ds+\int_t^T e^{rs}|\hat{\lambda}_s|^\alpha ds.
		\end{displaymath}

 Let $\{A^{i,n,+}\}_{n\in\mathbb{N}}$, $\{A^{i,n,-}\}_{n\in\mathbb{N}}$ and $\{K^{i,n}\}_{n\in\mathbb{N}}$ be the approximation sequences for $A^{i}$, $i=1,2$. Set $A^{i,n}=A^{i,n,+}-A^{i,n,-}-K^{i,n}$, $i=1,2$. It is easy to check that
\begin{align*}
	\int_t^T\alpha e^{rs}H_s^{\alpha/2-1}\hat{Y}_sdA^{1}_s=&\int_t^T\alpha e^{rs}H_s^{\alpha/2-1}\hat{Y}_sd({A}^1_s-A_s^{1,n})+\int_t^T\alpha e^{rs}H_s^{\alpha/2-1}\hat{Y}_sd{A}^{1,n}_s\\
	\leq &|\int_t^T\alpha e^{rs}H_s^{\alpha/2-1}\hat{Y}_sd({A}^1_s-A_s^{1,n})|+\int_t^T\alpha e^{rs}H_s^{\alpha/2-1}(\hat{Y}_s)^+dA_s^{1,n,+}\\
	&+\int_t^T\alpha e^{rs}H_s^{\alpha/2-1}(\hat{Y}_s)^-dA_s^{1,n,-}-\int_t^T\alpha e^{rs}H_s^{\alpha/2-1}(\hat{Y}_s)^+dK_s^{1,n}.
\end{align*}
By Lemma \ref{l1}, we have for any $t\in[0,T]$
\begin{displaymath}
	\lim_{n\rightarrow\infty}\hat{\mathbb{E}}[|\int_t^T\alpha e^{rs}H_s^{\alpha/2-1}\hat{Y}_sd({A}^1_s-A_s^{1,n})|]=0.
\end{displaymath}
Note that $Y_s^i\geq L_s$, for any $s\in[0,T]$ and $i=1,2$, which implies that $\hat{Y}_s\leq Y_s^1-L_s$. Hence, we have $(\hat{Y}_s)^+\leq Y_s^1-L_s$. By simple calculation, we obtain that
\begin{align*}
	\hat{\mathbb{E}}[\int_t^T\alpha e^{rs}H_s^{\alpha/2-1}(\hat{Y}_s)^+dA_s^{1,n,+}]&\leq C\hat{\mathbb{E}}[\sup_{t\in[0,T]}(|Y_t^1|+|Y_t^2|)^{\alpha-2}\int_t^T (\hat{Y}_s)^+d A_s^{1,n,+}]\\
	&\leq C(\hat{\mathbb{E}}[\sup_{t\in[0,T]}(|Y_t^1|^\alpha+|Y_t^2|^\alpha)])^{\frac{\alpha-2}{\alpha}}(\hat{\mathbb{E}}[|\int_t^T (\hat{Y}_s)^+d A_s^{1,n,+}|^{\frac{\alpha}{2}}])^{\frac{2}{\alpha}}.
\end{align*}
Recalling the definition of approximate Skorohod condition, we have
\begin{displaymath}
	\lim_{n\rightarrow\infty}\hat{\mathbb{E}}[|\int_t^T\alpha e^{rs}H_s^{\alpha/2-1}(\hat{Y}_s)^+dA_s^{1,n,+}|]=0.
\end{displaymath}
Similar analysis as above yields that
\begin{align*}
	&\lim_{n\rightarrow\infty}\hat{\mathbb{E}}[|\int_t^T\alpha e^{rs}H_s^{\alpha/2-1}(\hat{Y}_s)^-dA_s^{1,n,-}|]=0,\\
	&\lim_{n\rightarrow\infty}\hat{\mathbb{E}}[|\int_t^T\alpha e^{rs}H_s^{\alpha/2-1}(\hat{Y}_s)^+dA_s^{2,n,-}|]=0,\\
	&\lim_{n\rightarrow\infty}\hat{\mathbb{E}}[|\int_t^T\alpha e^{rs}H_s^{\alpha/2-1}(\hat{Y}_s)^-dA_s^{2,n,+}|]=0.
\end{align*}

Set $M^n_t=\int_0^t\alpha e^{rs}H_s^{\alpha/2-1}(\hat{Y}_s\hat{Z}_sdB_s+(\hat{Y}_s)^{+}dK^{1,n}_s+(\hat{Y}_s)^-dK_s^{2,n})$, $n\geq 1$. By Lemma 3.4 in \cite{HJPS1},  $M^n$ is a $G$-martingale. Let $r=\tilde{r}+\alpha$. Combining the above inequalities, we get
		\begin{align*}
		&H_t^{\alpha/2}e^{rt}+(M^n_T-M^n_t)\\ \leq &|\hat{\xi}|^{\alpha} e^{rT}+\int_t^T e^{rs}|\hat{\lambda}_s|^\alpha ds+\sum_{i=1}^2|\int_t^T\alpha e^{rs}H_s^{\alpha/2-1}\hat{Y}_sd({A}^i_s-A_s^{i,n})|\\
		&+\int_t^T\alpha e^{rs}H_s^{\alpha/2-1}(\hat{Y}_s)^+d(A_s^{1,n,+}+A_s^{2,n,-})
		+\int_t^T\alpha e^{rs}H_s^{\alpha/2-1}(\hat{Y}_s)^-d(A_s^{1,n,-}+A_s^{2,n,+})
		\end{align*}
		Taking conditional expectations on both sides and letting $n\rightarrow \infty$, there exists a constant $C:=C(\alpha,T, L,\underline{\sigma})>0$ such that
		\begin{displaymath}
		|\hat{Y}_t|^\alpha\leq C\hat{\mathbb{E}}_t[|\hat{\xi}|^\alpha+\int_t^T|\hat{\lambda}_s|^\alpha ds].
		\end{displaymath}
		The proof is complete.
	\end{proof}
	
	\section{ Proof of the main result}
	
	In this section, we will focus on the penalization method in order to get the existence of solutions to doubly reflected $G$-BSDEs.  For $n\in\mathbb{N}$,  consider the following family of $G$-BSDEs
	\begin{equation}\label{equ1}
	Y_t^n=\xi+\int_t^T f(s,Y_s^n,Z_s^n)ds+n\int_t^T(Y_s^n-L_s)^-ds -n\int_t^T(Y_s^n-U_s)^+ds-\int_t^T Z_s^ndB_s-(K_T^n-K_t^n).
	\end{equation}
	
	Now let $A_t^{n,-}=n\int_0^t (Y_s^n-U_s)^+ds$, $A_t^{n,+}=n\int_0^t (Y_s^n-L_s)^-ds$. Then $\{A_t^{n,\pm}\}_{t\in[0,T]}$ are nondecreasing processes. We can rewrite $G$-BSDE \eqref{equ1} as
	\begin{equation}\label{equ}
	Y_t^n=\xi+\int_t^T f(s,Y_s^n,Z_s^n)ds-\int_t^T Z_s^ndB_s-(K_T^n-K_t^n)+(A_T^{n,+}-A_t^{n,+})-(A_T^{n,-}-A_t^{n,-}).
	\end{equation}
	
\subsection {Uniform estimates of $Y^n$}

Under the weaker Assumptions (A1'), (A2), (A3'),  (A4), we show that $\{Y^n\}_{n=1}^\infty$ are uniformly bounded under the norm $\|\cdot\|_{S_G^\alpha}$.   	
\begin{lemma}\label{Esti-Y}
For $2\leq \alpha<\beta$, there exists a constant $C$ independent of $n$, such that  \[\hat{\mathbb{E}}[\sup_{t\in[0,T]}|Y_t^n|^\alpha]\leq C.\]	
\end{lemma}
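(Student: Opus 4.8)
The plan is to control $Y^n$ by comparing it with the intermediate generalized $G$-It\^o process $I$ supplied by (A3'), exploiting that both penalization terms push $Y^n$ towards the band $[L,U]\ni I$. Writing $\hat Y^n=Y^n-I$ and recalling $I_t=I_0+\int_0^t b^I(s)\,ds+\int_0^t\sigma^I(s)\,dB_s+K^I_t$, the process $\hat Y^n$ solves a $G$-BSDE with terminal value $\xi-I_T$, driver $f(s,Y^n_s,Z^n_s)+b^I(s)$, corrected diffusion $Z^n-\sigma^I$, the two penalization terms $A^{n,\pm}$, and the two non-increasing $G$-martingales $K^n$ and $K^I$. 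The key structural fact is the sign of the penalization when tested against $\hat Y^n$: since $L\le I\le U$, on $\{Y^n<L\}$ one has $\hat Y^n\le-(Y^n-L)^-\le 0$, whence $\hat Y^n\,n(Y^n-L)^-\le 0$; symmetrically, on $\{Y^n>U\}$ one has $\hat Y^n\ge(Y^n-U)^+\ge 0$, whence $-\hat Y^n\,n(Y^n-U)^+\le 0$. Thus, when the dynamics of $\hat Y^n$ are tested against $\hat Y^n$, both penalization contributions are non-positive and can be discarded in an upper estimate; this is exactly where (A3') enters.

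A naive implementation would apply the $G$-It\^o formula to $|\hat Y^n_t|^2$ (or to $|\hat Y^n_t|^\alpha e^{rt}$ as in Proposition \ref{lem0}), drop the two non-positive penalization integrals, absorb the driver via (A2) and Young's inequality into an $r\int|\hat Y^n|^\alpha$ term plus a small multiple of $\int|\hat Z^n|^2\,d\langle B\rangle$ already present on the left, and then take the supremum in $t$ and the $G$-expectation, using Proposition \ref{the1.3} to handle the $dB$-martingale and $I\in S_G^\beta$ to pass from $\hat Y^n$ back to $Y^n$. The main obstacle is the leftover term $\int_t^T\hat Y^n_s\,dK^n_s$: because $\hat Y^n$ has no definite sign it cannot be dropped, and bounding it by $\sup_s|\hat Y^n_s|\,|K^n_T|$ would require an a priori bound on $\hat{\mathbb E}[|K^n_T|^\alpha]$ uniform in $n$. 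That bound is precisely the content of Stage 2 and is not yet available here, so the direct $|\hat Y^n|^2$ route is circular and must be avoided.

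To break the circularity I would convert the sign observation into a comparison argument, which never generates an uncontrolled $K^n$-term. Dropping the non-positive downward push and invoking the $G$-BSDE comparison theorem (Theorem \ref{the1.5}, with the downward push $-\int_0^\cdot n(Y^n_s-U_s)^+\,ds$ playing the role of a decreasing finite-variation process $V$), one obtains $Y^n\le\bar Y^n$, where $\bar Y^n$ solves the $G$-BSDE penalized only from below against $L$; symmetrically $Y^n\ge\underline Y^n$, penalized only from above against $U$. Comparison also shows $\bar Y^n$ is non-decreasing and $\underline Y^n$ non-increasing in $n$, so $\bar Y^n\le\bar Y$ and $\underline Y^n\ge\underline Y$, where $\bar Y$ and $\underline Y$ are the monotone limits, i.e.\ the (maximal) solutions of the single-obstacle reflected $G$-BSDEs of \cite{LPSH} and \cite{lp}. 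The structural hypotheses embodied in (A3') (the obstacles being sandwiched by a generalized $G$-It\^o process) are exactly what guarantee $\bar Y,\underline Y\in S_G^\alpha(0,T)$. Hence $\underline Y\le\underline Y^n\le Y^n\le\bar Y^n\le\bar Y$ with $\underline Y,\bar Y\in S_G^\alpha(0,T)$, giving $\hat{\mathbb E}[\sup_{t\in[0,T]}|Y^n_t|^\alpha]\le C(\|\underline Y\|_{S_G^\alpha}^\alpha+\|\bar Y\|_{S_G^\alpha}^\alpha)$ uniformly in $n$, entirely bypassing any estimate on $K^n_T$.
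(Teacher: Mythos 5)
Your proposal correctly identifies the role of the sandwich process $I$ from (A3') and the sign of the penalization terms, which is indeed the heart of the paper's argument. However, your central claim --- that the direct It\^{o} route is ``circular'' because the leftover term $\int_t^T \bar{Y}^n_s\,dK^n_s$ would require a uniform bound on $\hat{\mathbb{E}}[|K^n_T|^\alpha]$ --- is wrong, and it is precisely this claim that drives you away from the paper's (correct) proof. The paper splits $\bar{Y}^n=(\bar{Y}^n)^+-(\bar{Y}^n)^-$: the contribution $(\bar{Y}^n_s)^-\,dK^n_s$ is non-positive (since $K^n$ is non-increasing) and is simply discarded in the upper bound, while the contribution $-(\bar{Y}^n_s)^+\,dK^n_s$ is absorbed into the process
\begin{equation*}
M_t=\int_0^t\alpha e^{rs}H_s^{\alpha/2-1}\bigl(\bar{Y}_s^n\bar{Z}_s\,dB_s+(\bar{Y}_s^n)^+\,dK_s^n+(\bar{Y}^n_s)^-\,dK^I_s\bigr),
\end{equation*}
which is a $G$-martingale by Lemma 3.4 in \cite{HJPS1} (a non-negative integrand against a decreasing $G$-martingale yields a $G$-martingale). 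Taking conditional expectations makes this term vanish identically; no estimate on $K^n_T$ is needed, and the pointwise bound $|\bar{Y}^n_t|^\alpha\leq C\hat{\mathbb{E}}_t[\cdots]$ follows, after which Theorem \ref{the1.2} gives the $S_G^\alpha$ bound. So there is no circularity with Stage 2 to break.

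Your substitute comparison argument also has genuine gaps under the hypotheses of this lemma, which is proved under the \emph{weaker} assumptions (A1'), (A2), (A3'), (A4). First, the bound $\underline{Y}\leq Y^n\leq \bar{Y}$ outsources the uniform estimate to the single-obstacle results of \cite{LPSH} and \cite{lp}; but the upper-obstacle theory of \cite{lp} requires $U$ itself to be (essentially) a generalized $G$-It\^{o} process --- the content of (A3), not (A3') --- whereas here $U$ is merely an element of $S_G^\beta(0,T)$ lying above $I$. Indeed, the paper's own remark after Theorem \ref{main} extends the single-obstacle results under exactly these sandwich-type conditions, using the present lemma's technique, so invoking those results here would reverse the logical order. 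Second, even granting well-posedness, the passage from monotonicity of $\bar{Y}^n$ in $n$ to a limit $\bar{Y}\in S_G^\alpha(0,T)$ is not free under $G$-expectation (there is no general monotone convergence theorem in this framework); what you actually need is only $\sup_n\|\bar{Y}^n\|_{S_G^\alpha}<\infty$, and the single-penalized equations for $\bar{Y}^n$ and $\underline{Y}^n$ each carry their own decreasing $G$-martingales, so the very term you feared reappears there and must be handled by the Lemma 3.4 trick above --- at which point your detour collapses back into the paper's direct argument. A minor further point: Theorem \ref{the1.5} requires the $V^l$ to be given processes with $V^1-V^2$ increasing, so the downward push $-n\int_0^\cdot(Y^n_s-U_s)^+ds$ cannot literally play the role of $V$ (it depends on the solution); the comparison $Y^n\leq\bar{Y}^n$ should instead be obtained by ordering the full generators $f(s,y,z)+n(y-L_s)^--n(y-U_s)^+\leq f(s,y,z)+n(y-L_s)^-$, which is a standard but necessary repair.
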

	
\begin{proof} Let $I_t=I_0+\int_0^t b^I(s)ds+\int_0^t \sigma^I(s)dB_s+K^I_t$ be the generalized $G$-It\^o process such that $L\le I\le U$. 	Set
$\bar{Y}_t^n=Y_t^n-{I}_t,$  $\bar{Z}_t^n=Z_t^n-\sigma^I(t)$, $H_t=(\bar{Y}^n_t)^2$, $\bar{U}_t=U_t-I_t$, $\bar{L}_t=L_t-I_t$, and $\bar{f}_t=f(t,Y^n_t, Z^n_t)+b^I(t)$. $G$-BSDE \eqref{equ1} can be rewritten as
\begin{align*}
\bar{Y}_t^n=&\xi-I_T+\int_t^T \bar{f}(s)ds+n\int_t^T(\bar{Y}_s^n-\bar{L}_s)^-ds -n\int_t^T(\bar{Y}_s^n-\bar{U}_s)^+ds\\
 &-\int_t^T \bar{Z}_s^ndB_s-(K_T^n-K_t^n) +(K^I_T-K^I_t).
\end{align*}
For any $r>0$, applying It\^{o}'s formula to $H_t^{\alpha/2}e^{rt}$, we get
\begin{displaymath}
\begin{split}
&\quad H_t^{\alpha/2}e^{rt}+\int_t^T re^{rs}H_s^{\alpha/2}ds+\int_t^T \frac{\alpha}{2} e^{rs}H_s^{\alpha/2-1}(\bar{Z}_s^n)^2d\langle B\rangle_s\\
&=|\xi-{I}_T|^\alpha e^{rT}+\alpha(1-\frac{\alpha}{2})\int_t^Te^{rs}H_s^{\alpha/2-2}(\bar{Y}_s^n)^2(\bar{Z}_s^n)^2d\langle B\rangle_s\\
&\quad-\int_t^T\alpha e^{rs}H_s^{\alpha/2-1}n\bar{Y}_s^n(\bar{Y}_s^n-\bar{U}_s)^+ds+\int_t^T\alpha e^{rs}H_s^{\alpha/2-1}n\bar{Y}_s^n(\bar{Y}_s^n-\bar{L}_s)^-ds\\
&\quad+\int_t^T{\alpha} e^{rs}H_s^{\alpha/2-1}\bar{Y}_s^n\bar{f}_sds-\int_t^T\alpha e^{rs}H_s^{\alpha/2-1}(\bar{Y}_s^n\bar{Z}_s^ndB_s+\bar{Y}_s^ndK_s^n-\bar{Y}^n_sdK^I_s).
\end{split}
\end{displaymath}
Noting that $-\bar{Y}_s^n(\bar{Y}_s^n-\bar{U}_s)^+\le0$ and $\bar{Y}_s^n(\bar{Y}_s^n-\bar{L}_s)^-\le0$, we get	
\begin{displaymath}
\begin{split}
&\quad H_t^{\alpha/2}e^{rt}+\int_t^T re^{rs}H_s^{\alpha/2}ds+\int_t^T \frac{\alpha}{2} e^{rs}H_s^{\alpha/2-1}(\bar{Z}_s^n)^2d\langle B\rangle_s\\
&\leq|\xi-{I}_T|^\alpha e^{rT}+\alpha(1-\frac{\alpha}{2})\int_t^Te^{rs}H_s^{\alpha/2-2}(\bar{Y}_s^n)^2(\bar{Z}_s^n)^2d\langle B\rangle_s\\
&\quad+\int_t^T{\alpha} e^{rs}H_s^{\alpha/2-1/2}|\bar{f}_s|ds-(M_T-M_t),
\end{split}
\end{displaymath}	
where \[M_t=\int_0^t\alpha e^{rs}H_s^{\alpha/2-1}(\bar{Y}_s^n\bar{Z}_sdB_s+(\bar{Y}_s^n)^+dK_s^n+(\bar{Y}^n_s)^-dK^I_s)\] is a $G$-martingale.
From the assumption of $f$, we have
\begin{align*}
&\int_t^T{\alpha} e^{rs}H_s^{\alpha/2-1/2}|\bar{f}_s|ds\\
\leq &\int_t^T{\alpha} e^{rs}H_s^{\alpha/2-1/2}\{|f(s,0,0)|+|b^I(s)|+\kappa[|\bar{Y}_s^n|+|\bar{Z}_s^n|+|{I}_s|+|\sigma^I(s)|]\}ds\\
\leq &(\alpha \kappa+\frac{\alpha \kappa^2}{\underline{\sigma}^2(\alpha-1)})\int_t^T e^{rs}H_s^{\alpha/2}ds+\frac{\alpha(\alpha-1)}{4}\int_t^Te^{rs}H_s^{\alpha/2-1}(\bar{Z}_s^n)^2d\langle B\rangle_s\\
&+\int_t^T \alpha e^{rs}H_s^{\alpha/2-1/2}[|f(s,0,0)|+|b^I(s)|+\kappa(|{I}_s|+|\sigma^I(s)|)] ds.
\end{align*}
By Young's inequality, we obtain
\begin{align*}
&\int_t^T \alpha e^{rs}H_s^{\alpha/2-1/2}[|f(s,0,0)|+|b^I(s)|+\kappa(|{U}_s|+|\sigma^I(s)|)] ds\\
\leq &4(\alpha-1)\int_t^T e^{rs}H_s^{\alpha/2}ds +\int_t^T e^{rs}[|f(s,0,0)|^\alpha+|b^I(s)|^\alpha+\kappa^\alpha|{I}_s|^\alpha+\kappa^\alpha|\sigma^I(s)|^\alpha]ds.
\end{align*}
Combining the above inequalities, we get
\begin{align*}
&H_t^{\alpha/2}e^{rt}+\int_t^T (r-\tilde{\alpha})e^{rs}H_s^{\alpha/2}ds+\int_t^T \frac{\alpha(\alpha-1)}{4} e^{rs}H_s^{\alpha/2-1}(\bar{Z}_s^n)^2d\langle B\rangle_s+(M_T-M_t)\\
\leq &|\xi-{I}_T|^\alpha e^{rT}+\int_t^T e^{rs}[|f(s,0,0)|^\alpha+|b^I(s)|^\alpha+\kappa^\alpha|{I}_s|^\alpha+\kappa^\alpha|\sigma^I(s)|^\alpha]ds,
\end{align*}where $\tilde{\alpha}=4(\alpha-1)+\alpha \kappa+\frac{\alpha \kappa^2}{\underline{\sigma}^2(\alpha-1)}$. Setting $r=\tilde{\alpha}+1$ and taking conditional expectations on both sides, we derive that
\begin{displaymath}
H_t^{\alpha/2}e^{rt}\leq \hat{\mathbb{E}}_t[|\xi-{I}_T|^\alpha e^{rT}+\int_t^T e^{rs}[|f(s,0,0)|^\alpha+|b^I(s)|^\alpha+\kappa^\alpha|I_s|^\alpha+\kappa^\alpha|\sigma^I(s)|^\alpha]ds].
\end{displaymath}
Then, there exists a constant $C$ independent of $n$ such that
\begin{displaymath}
|\bar{Y}_t^n|^\alpha\leq C\hat{\mathbb{E}}_t[|\xi-{I}_T|^\alpha+\int_t^T [|f(s,0,0)|^\alpha+|b^I(s)|^\alpha+|\sigma^I(s)|^\alpha+|{I}_s|^\alpha]ds].
\end{displaymath}
Noting that $|Y_t^n|^\alpha\leq C(|\bar{Y}_t^n|^\alpha+|I_t|^\alpha)$ and applying Theorem \ref{the1.2}, we finally get the desired result.
\end{proof}

\subsection {Convergence of $(Y^n-U)^+$ and $(Y^n-L)^-$}	
Under the Assumptions (A1'), (A2), (A3'),  (A4), we show that $(Y^n-U)^+$ and $(Y^n-L)^-$  converge to $0$ under the norm $\|\cdot\|_{S_G^\alpha}$.   First, we prove a simple lemma.

\begin {lemma}\label {Esti-D} For $S\in S^{\beta}_G(0,T)$ with $\beta>1$, define $ \int_s^te^{-nu}dS_u:=e^{-nt}S_t-e^{-ns}S_s+\int_s^tn S_ue^{-nu}du,$ and set $i_n(t)=\hat{\mathbb{E}}_t[|\int_t^Te^{-n(s-t)}dS_s|^{\alpha}]$  for some $1\le \alpha<\beta$. Then, as $n\rightarrow\infty$, we have,
\[\hat{\mathbb{E}}[\sup_{t\in[0,T]}|i_n(t)|]\rightarrow 0.\]
\end {lemma}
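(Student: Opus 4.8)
The plan is to first rewrite the quantity so that its smallness is visible, then reduce the general case to that of a smooth process through a density argument routed entirely through the $G$-evaluation $\mathcal{E}$. Writing $D_n(t):=\int_t^Te^{-n(s-t)}dS_s$ (well defined for $S\in S_G^\beta(0,T)$ by the integration-by-parts definition, even without finite variation), the definition together with the identity $n\int_0^{T-t}e^{-nv}dv=1-e^{-n(T-t)}$ gives the representation
\[
D_n(t)=e^{-n(T-t)}(S_T-S_t)+n\int_0^{T-t}(S_{t+v}-S_t)e^{-nv}\,dv .
\]
From this I read off the crude but $n$-uniform bound $\sup_{t\in[0,T]}|D_n(t)|\le 2\sup_{s\in[0,T]}|S_s|$, which will be used to absorb the approximation error, and I also see the intended smallness: the weight $ne^{-nv}\,dv$ concentrates near $v=0$, where $S_{t+v}-S_t$ is small by continuity.

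Since $D_n$ is linear in $S$, I would fix an approximating sequence $S^m\in S_G^0(0,T)$ with $\|S-S^m\|_{S_G^\beta}\to0$ and split $D_n=D_n^m+\tilde D_n^m$, where $D_n^m$ and $\tilde D_n^m$ are built from $S^m$ and $S-S^m$. Using $|a+b|^\alpha\le 2^{\alpha-1}(|a|^\alpha+|b|^\alpha)$,
\[
i_n(t)\le 2^{\alpha-1}\hat{\mathbb{E}}_t[|D_n^m(t)|^\alpha]+2^{\alpha-1}\hat{\mathbb{E}}_t[|\tilde D_n^m(t)|^\alpha].
\]
Because each $\tilde D_n^m(t)$ is a single random variable bounded $t$-uniformly by $2\sup_s|S_s-S^m_s|$, the error term satisfies
\[
\hat{\mathbb{E}}[\sup_{t\in[0,T]}\hat{\mathbb{E}}_t[|\tilde D_n^m(t)|^\alpha]]\le 2^\alpha\,\hat{\mathbb{E}}[\sup_{t\in[0,T]}\hat{\mathbb{E}}_t[\sup_s|S_s-S^m_s|^\alpha]]=2^\alpha\,\mathcal{E}(\sup_s|S_s-S^m_s|^\alpha).
\]
Applying Theorem \ref{the1.2} with $\delta=\beta-\alpha$ bounds the right-hand side by a continuous function of $\|S-S^m\|_{S_G^\beta}$ vanishing at $0$; crucially this bound does not depend on $n$, so the error is uniformly small once $m$ is large.

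It then remains to show, for each fixed smooth $S^m$, that $\hat{\mathbb{E}}[\sup_t\hat{\mathbb{E}}_t[|D_n^m(t)|^\alpha]]\to0$ as $n\to\infty$. Using the representation above and splitting the $v$-integral at a level $\epsilon>0$, the $t$-independent modulus $\omega_m(\epsilon):=\sup_{|u-v|\le\epsilon}|S^m_u-S^m_v|$ gives the q.s.\ bound $\sup_t|D_n^m(t)|\le 2\omega_m(\epsilon)+4\|S^m\|_\infty e^{-n\epsilon}$. Passing to $\hat{\mathbb{E}}_t$, then to the supremum in $t$, then to $\hat{\mathbb{E}}$ turns this into an $\mathcal{E}$-estimate and yields, for every $\epsilon>0$,
\[
\limsup_{n\to\infty}\hat{\mathbb{E}}[\sup_t\hat{\mathbb{E}}_t[|D_n^m(t)|^\alpha]]\le C\,\mathcal{E}(\omega_m(\epsilon)^\alpha).
\]
Since $S^m\in S_G^0(0,T)$ is bounded and has uniformly continuous paths, $\omega_m(\epsilon)\to0$ everywhere and stays uniformly bounded, so $\|\omega_m(\epsilon)\|_{L_G^\beta}\to0$ by bounded convergence under $\hat{\mathbb{E}}$, and Theorem \ref{the1.2} then forces $\mathcal{E}(\omega_m(\epsilon)^\alpha)\to0$ as $\epsilon\to0$. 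Letting $\epsilon\to0$ kills the fixed-$m$ term and letting $m\to\infty$ kills the error term, proving the claim. I expect the main obstacle to be exactly this passage to the limit: the classical dominated-convergence and weak-compactness tools are unavailable under $\hat{\mathbb{E}}$, so the smallness of $D_n$ must be transported through the $G$-evaluation $\mathcal{E}$, and it is the density of $S_G^0$ in $S_G^\beta$ together with the boundedness of the smooth approximants that legitimizes the interchange of the three limits in $n$, $\epsilon$ and $m$.
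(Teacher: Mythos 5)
Your proposal is correct in substance but reaches the conclusion by a genuinely different route on the key step. The reduction-by-density stage is shared in spirit: the paper proves that the maps $D^n:S_G^\beta(0,T)\rightarrow S_G^1(0,T)$, $D^n(S)=i_n$, are uniformly continuous in $n$ (via an interpolation estimate for $|x|^\alpha$), while you obtain the same uniformity more simply from linearity together with the crude bound $\sup_t|\tilde D_n^m(t)|\leq 2\sup_s|S_s-S^m_s|$, which your integration-by-parts representation $D_n(t)=e^{-n(T-t)}(S_T-S_t)+n\int_0^{T-t}(S_{t+v}-S_t)e^{-nv}\,dv$ makes transparent; both arguments then invoke Theorem \ref{the1.2} to transport $S_G^\beta$-smallness into the $G$-evaluation $\mathcal{E}$, uniformly in $n$. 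Where you genuinely diverge is the choice of dense class and the mechanism of smallness: the paper takes $G$-It\^o processes with step coefficients $b^S,c^S,\sigma^S\in M_G^0(0,T)$ and estimates the Lebesgue part and the stochastic integral part directly (the latter via Proposition \ref{the1.3}), obtaining the explicit rates $n^{-\alpha}$ and $n^{-\alpha/2}$; you take $S^m\in S_G^0(0,T)$ and exploit the pathwise modulus of continuity $\omega_m(\epsilon)$, a softer, rate-free argument. This difference is not purely cosmetic: the same direct computation with an It\^o-process upper obstacle is exactly what yields the explicit rate $1/n$ in Lemma \ref{Conv-rate-A}, which your modulus-of-continuity argument cannot produce; on the other hand, your route needs no It\^o structure at all and isolates more clearly why mere continuity of $S$ suffices for this lemma.

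There is one step you must repair: the assertion that $\|\omega_m(\epsilon)\|_{L_G^\beta}\rightarrow 0$ ``by bounded convergence under $\hat{\mathbb{E}}$''. As you yourself observe a few lines later, dominated and bounded convergence are not available for sublinear expectations, so this phrase, taken literally, is not a licensed move. It is rescuable in two ways, and you should state one explicitly. First, the convergence here is monotone: $\epsilon\mapsto\omega_m(\epsilon)$ is nondecreasing, each $\omega_m(\epsilon)$ is a bounded continuous function on $\Omega$, and $\omega_m(\epsilon)\downarrow 0$ pointwise as $\epsilon\downarrow 0$ by uniform continuity of the paths; the downward monotone convergence theorem for bounded continuous random variables, valid precisely because the representing set $\mathcal{P}$ of Theorem \ref{the1.1} is weakly compact (see \cite{DHP11}), then gives $\hat{\mathbb{E}}[\omega_m(\epsilon)^\beta]\downarrow 0$. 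Second, and more quantitatively, $S^m_t=h(t,B_{t_1\wedge t},\dots,B_{t_k\wedge t})$ with $h\in C_{b,Lip}$ yields $\omega_m(\epsilon)\leq L_h\big(\epsilon+k\sup_{|u-v|\leq\epsilon}|B_u-B_v|\big)$, and the modulus of continuity of $B$ is controlled uniformly over $P\in\mathcal{P}$ (under each $P$, $B$ is a martingale whose quadratic variation has density in $[\underline{\sigma}^2,\bar{\sigma}^2]$), so $\|\omega_m(\epsilon)\|_{L_G^\beta}\rightarrow 0$ with an explicit rate in $\epsilon$. With either patch, your three-limit scheme is sound: the error term is uniform in $n$, the $e^{-n\epsilon}$ term dies as $n\rightarrow\infty$ for each fixed $\epsilon$, and the remaining term dies as $\epsilon\rightarrow 0$ for each fixed $m$, so the order $n\rightarrow\infty$, then $\epsilon\rightarrow 0$, then $m\rightarrow\infty$ is legitimate.
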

\begin {proof}   Notice that the mappings  $D^n: S^\beta_G(0,T)\rightarrow S^1_G(0,T)$ by $D^n(S)=i_n$ are uniformly continuous with respect to $n$, i.e.,
\begin{align*}\|D^n(S)-D^n(S')\|_{S^1_G}&\le 3^{\alpha}\alpha\hat{\mathbb{E}}[\sup_{t\in[0,T]}\hat{\mathbb{E}}_t[\sup_{s\in[0,T]}|S_s-S'_s|\sup_{s\in[0,T]}|S^{\theta}_s|^{\alpha-1}]]\\
&\le 3^\alpha\alpha \bigg(\hat{\mathbb{E}}\bigg[\sup_{t\in[0,T]}\hat{\mathbb{E}}_t[\sup_{s\in[0,T]}|S_s-S'_s|^\alpha]\bigg]\bigg)^{\frac{1}{\alpha}}\bigg(\hat{\mathbb{E}}\bigg[\sup_{t\in[0,T]}\hat{\mathbb{E}}_t[\sup_{s\in[0,T]}|S^\theta_s|^\alpha]\bigg]\bigg)^{\frac{\alpha-1}{\alpha}}. 
\end{align*} where $S^\theta=\theta S+(1-\theta)S'$ for some $\theta\in[0,1]$.
By Theorem \ref{the1.2}, it suffices to prove this lemma for a dense subset of $S^\beta_G(0,T)$. For a $G$-It\^o process $S_t=S_0+\int_0^t b^S(s)ds+\int_0^t \sigma^S(s)dB_s+\int_0^tc^S(s)d\langle B\rangle_s$ with $b^S, c^S, \sigma^S\in M^0_G(0,T)$, we have
\begin{align*}|i_n(t)|&\le C_{\alpha}\bigg(\hat{\mathbb{E}}_t\bigg[\big|\int_t^Te^{-n(s-t)}(|b^S(s)|+|c^S(s)|)ds\big|^{\alpha}\bigg]+\hat{\mathbb{E}}_t\bigg[\big|\int_t^Te^{-n(s-t)}\sigma^S(s)dB_s\big|^\alpha\bigg]\bigg)\\
&\le C_{\alpha}\bigg(\frac{1}{n}\bigg)^{\alpha}\hat{\mathbb{E}}_t\bigg[\sup_{s\in[0,T]}(|b^S(s)|+|c^S(s)|)^{\alpha}\bigg]+ C_{\alpha}\bigg(\frac{1}{n}\bigg)^{\frac{\alpha}{2}}\hat{\mathbb{E}}_t\bigg[\sup_{s\in[0,T]}|\sigma^S(s)|^{\alpha}\bigg].
\end{align*}
So, we get $\hat{\mathbb{E}}[\sup_{t\in[0,T]}|i_n(t)|^\alpha]\rightarrow 0$ as $n$ goes to $\infty$.
\end {proof}

\begin{lemma}\label{Esti-U-L} Let $\tilde{Y}^n, \tilde{M}^n\in S^\alpha_G(0,T)$ and $\tilde{f}^n\in M^\alpha_G(0,T)$ for some $1<\alpha\le\beta$ satisfy
\[\tilde{Y}^n_t=\xi+\int_t^T\tilde{f}^n(s)ds+n\int_t^T(\tilde{Y}_s^n-L_s)^-ds -n\int_t^T(\tilde{Y}_s^n-U_s)^+ds-(\tilde{M}^n_T-\tilde{M}^n_t).\]
Assuming that $\tilde{M}^n$ is a martingale under a time-consistent sublinear expectation $\tilde{\mathbb{E}}$, we have
\begin {eqnarray}
\label {Esti-U}& &(\tilde{Y}_t^n-U_t)^+\leq \bigg|\mathbb{\tilde{E}}_t[\int_t^Te^{-n(s-t)}\tilde{f}^n(s)ds+\int_t^Te^{-n(s-t)}dU_s]\bigg|,\\
\label {Esti-L}& &(\tilde{Y}_t^n-L_t)^-\le \bigg|\mathbb{\tilde{E}}_t[\int_t^Te^{-n(s-t)}\tilde{f}^n(s)ds+\int_t^Te^{-n(s-t)}dL_s]\bigg|.
\end {eqnarray}
\end{lemma}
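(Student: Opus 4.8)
The plan is to prove both estimates by the same integrating--factor computation; I will carry out \eqref{Esti-U} in detail, \eqref{Esti-L} being the mirror image. Fix $t\in[0,T]$ and put $\phi_s=e^{-n(s-t)}$, so that $\phi_t=1$ and $\phi'_s=-n\phi_s$. From the equation for $\tilde Y^n$ I read off $d\tilde Y^n_s=-\tilde f^n(s)\,ds-n(\tilde Y^n_s-L_s)^-\,ds+n(\tilde Y^n_s-U_s)^+\,ds+d\tilde M^n_s$, and I apply the product rule to $\phi_s\tilde Y^n_s$: the whole point of the factor $\phi$ is that the term $\phi'_s\tilde Y^n_s=-n\phi_s\tilde Y^n_s$ it produces is exactly what is needed to combine with the $U$--penalization. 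Integrating over $[t,T]$, using $\tilde Y^n_T=\xi$, and subtracting the identity $U_t=\phi_TU_T+n\int_t^T\phi_sU_s\,ds-\int_t^T\phi_s\,dU_s$ (which is just the definition of $\int_t^T\phi_s\,dU_s$ from Lemma~\ref{Esti-D}), I obtain an exact representation of $\tilde Y^n_t-U_t$.

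The heart of the argument is a pointwise sign analysis of the drift in that representation. The integrating factor converts the $U$--penalization into $n\phi_s(\tilde Y^n_s-U_s)-n\phi_s(\tilde Y^n_s-U_s)^+=-n\phi_s(\tilde Y^n_s-U_s)^-$, while the $L$--penalization survives as $+n\phi_s(\tilde Y^n_s-L_s)^-$. I claim that $-n(\tilde Y^n_s-U_s)^-+n(\tilde Y^n_s-L_s)^-\le0$ everywhere: on $\{\tilde Y^n_s\ge L_s\}$ the second summand vanishes and the first is $\le0$, while on $\{\tilde Y^n_s<L_s\}$ one has $\tilde Y^n_s<U_s$ as well and the sum collapses to $n(L_s-U_s)\le0$ by $L\le U$. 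Together with the terminal term $\phi_T(\xi-U_T)\le0$, which uses $\xi\le U_T$ from (A4), the representation reduces to the pathwise inequality
\[
\tilde Y^n_t-U_t\le\int_t^T\phi_s\tilde f^n(s)\,ds+\int_t^T\phi_s\,dU_s-\int_t^T\phi_s\,d\tilde M^n_s .
\]

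It then remains to pass to $\tilde{\mathbb E}_t$. Since the left side is $\mathcal F_t$--measurable it equals its own conditional expectation, and by sub--additivity the right side is dominated by $\tilde{\mathbb E}_t[\int_t^T\phi_s\tilde f^n(s)\,ds+\int_t^T\phi_s\,dU_s]$ plus the contribution of the martingale integral. To treat the latter I would rewrite, via integration by parts, $\int_t^T\phi_s\,d\tilde M^n_s=\int_{[t,T]}(\tilde M^n_u-\tilde M^n_t)\,\mu(du)$, where $\mu(du)=e^{-n(T-t)}\delta_T(du)+ne^{-n(u-t)}\mathbf{1}_{[t,T]}(u)\,du$ is a probability measure on $[t,T]$, and then use the martingale relation $\tilde{\mathbb E}_t[\tilde M^n_u-\tilde M^n_t]=0$ to argue that this term does not raise the conditional expectation. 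Bounding the surviving term by its absolute value and taking positive parts yields \eqref{Esti-U}. For \eqref{Esti-L} I run the same computation on $L_t-\tilde Y^n_t$, where now the mirror inequality $-n(\tilde Y^n_s-L_s)^++n(\tilde Y^n_s-U_s)^+\le0$ and the terminal bound $L_T\le\xi$ do the work, and the martingale integral enters with the opposite sign.

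I expect the genuinely delicate step to be the handling of $\int_t^T\phi_s\,d\tilde M^n_s$ under the \emph{sublinear} $\tilde{\mathbb E}_t$: in contrast with the classical (linear) setting this integral is not automatically mean zero, and only the one--sided martingale relation is at hand, so one must check in each of the two estimates that the sign carried by the martingale term is the favourable one (it drops immediately when $\tilde{\mathbb E}$ is linear or $\tilde M^n$ is symmetric). This is precisely the asymmetry between the two obstacles --- and the reason (A3) must assume the upper obstacle to be a generalized $G$--It\^o process --- so I would isolate this sign check as the main technical obstacle, the remaining manipulations being routine once the pathwise inequality above is in place.
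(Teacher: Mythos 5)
Your pathwise computation is correct and is essentially the paper's own: the paper applies the product rule to $e^{-ns}(\tilde Y^n_s-S_s)$ with $S\in\{U,L\}$, which is exactly your ``subtract the identity for $U$'' step, and your sign analysis of the drift ($-n\phi_s(\tilde Y^n_s-U_s)^-+n\phi_s(\tilde Y^n_s-L_s)^-\le 0$ via $L\le U$) and of the terminal term (via $L_T\le\xi\le U_T$) matches the paper. The genuine gap is precisely the step you defer as ``the main technical obstacle'': the martingale integral. Your proposed mechanism --- the static relations $\tilde{\mathbb{E}}_t[\tilde M^n_u-\tilde M^n_t]=0$ combined with subadditivity and a per-case favourable-sign check --- cannot close the argument in the direction \eqref{Esti-U} requires. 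Writing the pathwise bound as $(\tilde Y^n_t-U_t)+\int_t^T\phi_s\,d\tilde M^n_s\le X$ with $X=\int_t^T\phi_s\tilde f^n(s)ds+\int_t^T\phi_s\,dU_s$, applying $\tilde{\mathbb{E}}_t$ and translation invariance gives $(\tilde Y^n_t-U_t)+\tilde{\mathbb{E}}_t[\int_t^T\phi_s\,d\tilde M^n_s]\le\tilde{\mathbb{E}}_t[X]$, so you need the \emph{lower} bound $\tilde{\mathbb{E}}_t[\int_t^T\phi_s\,d\tilde M^n_s]\ge0$. But subadditivity applied to your (correct) mixture representation $\int_{[t,T]}(\tilde M^n_u-\tilde M^n_t)\,\mu(du)$ only yields $\tilde{\mathbb{E}}_t[\int_t^T\phi_s\,d\tilde M^n_s]\le\int_{[t,T]}\tilde{\mathbb{E}}_t[\tilde M^n_u-\tilde M^n_t]\,\mu(du)=0$, i.e.\ the wrong side; and $\tilde{\mathbb{E}}_t[-\int_t^T\phi_s\,d\tilde M^n_s]$ can be strictly positive, since in the intended application $\tilde M^n_t=\int_0^tZ^n_sd\tilde B^{\varepsilon,n}_s+K^n_t$ contains the decreasing $G$-martingale $K^n$, for which $-K^n$ is not a martingale. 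What actually closes the step is the \emph{exact} identity $\tilde{\mathbb{E}}_t[\int_t^Te^{-n(s-t)}d\tilde M^n_s]=0$, and this is a dynamic statement: discretize, note that $\tilde{\mathbb{E}}_{t_i}[\phi_{t_i}(\tilde M^n_{t_{i+1}}-\tilde M^n_{t_i})]=\phi_{t_i}\tilde{\mathbb{E}}_{t_i}[\tilde M^n_{t_{i+1}}-\tilde M^n_{t_i}]=0$ by positive homogeneity (the weights $\phi_{t_i}$ are positive and deterministic), and peel off the increments one at a time by the tower property and translation invariance, then pass to the limit. This uses time-consistency of $\tilde{\mathbb{E}}$ in an essential way, not merely the $t$-conditional relations you invoke, and it is two-sided, so no sign check is needed at all.

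Consequently your structural diagnosis is also off. With the exact zero identity, both estimates follow symmetrically by keeping the martingale integral on the same side as $\tilde Y^n_t$: monotonicity and translation invariance give $\tilde Y^n_t-U_t\le\tilde{\mathbb{E}}_t[X]$ for \eqref{Esti-U} and, from the reversed pathwise inequality, $\tilde Y^n_t-L_t\ge\tilde{\mathbb{E}}_t[\int_t^T\phi_s\tilde f^n(s)ds+\int_t^T\phi_s\,dL_s]$ for \eqref{Esti-L}, whence $(\tilde Y^n_t-L_t)^-\le|\tilde{\mathbb{E}}_t[\cdot]|$. The lemma therefore treats $U$ and $L$ completely symmetrically, requiring only $L\le U$, $L_T\le\xi\le U_T$ and $S\in S^\beta_G(0,T)$ (so that Lemma \ref{Esti-D} applies afterwards); in particular, the weaker assumption (A3') suffices here and in Lemma \ref{Conv-U-L}. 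The asymmetry you attribute to this step --- ``the reason (A3) must assume the upper obstacle to be a generalized $G$-It\^o process'' --- lives elsewhere in the paper: (A3) is invoked only in Lemma \ref{Conv-rate-A}, where the generalized $G$-It\^o form of $U$ turns the right-hand side of \eqref{Esti-U} into an $O(1/n)$ quantity and thereby yields the uniform bound on $A^{n,-}_T$ in Lemma \ref{Esti-A-K}; it plays no role in the present lemma.
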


\begin{proof}
For $S\in S^{\beta}_G(0,T)$, setting $\bar{Y}^n_t=\tilde{Y}^n_t-S_t$, $\bar{U}_t=U_t-S_t$ and $\bar{L}_t=L_t-S_t$, we have
\begin{align*}
 &e^{-nt}\bar{Y}^n_t+\int_t^T e^{-ns}d\tilde{M}^n_s\\
=&e^{-nT}(\xi-S_T)+\int_t^T ne^{-ns}\bigg(\bar{Y}_s^n-(\bar{Y}_s^n-\bar{U}_s)^++(\bar{Y}^n_s-\bar{L}_s)^-\bigg)ds\\
 & +\int_t^Te^{-ns}\tilde{f}^n(s)ds+\int_t^Te^{-ns}dS_s.
\end{align*}
(1) If $S_t=U_t$, we have $\xi- S_T=\xi-U_T\le0$, and
\begin{displaymath} \label {Esti-U-proof}
\bar{Y}_s^n-(\bar{Y}_s^n-\bar{U}_s)^++(\bar{Y}^n_s-\bar{L}_s)^-=-(\tilde{Y}^n_s-U_s)^-+(\tilde{Y}^n_s-L_s)^-\le0.
\end{displaymath}
So, we have
\begin{align*}
(\tilde{Y}_t^n-U_t)^+\leq \bigg|\mathbb{\tilde{E}}_t[\int_t^Te^{-n(s-t)}\tilde{f}^n(s)ds+\int_t^Te^{-n(s-t)}dU_s]\bigg|.
\end{align*}
(2) If $S_t=L_t$, we have $\xi- S_T=\xi-L_T\ge0$, and
\begin{displaymath} \label {Esti-L-proof}
\bar{Y}_s^n-(\bar{Y}_s^n-\bar{U}_s)^++(\bar{Y}^n_s-\bar{L}_s)^-=(\tilde{Y}^n_s-L_s)^+-(\tilde{Y}^n_s-U_s)^-\ge0.
\end{displaymath}
So, we have
\begin{align*}
(\tilde{Y}_t^n-L_t)^-\le \bigg|\mathbb{\tilde{E}}_t[\int_t^Te^{-n(s-t)}\tilde{f}^n(s)ds+\int_t^Te^{-n(s-t)}dL_s]\bigg|.
\end{align*}
\end{proof}
\begin{lemma}\label{Conv-U-L}
Assume that (A1'), (A2), (A3') and (A4) hold.
As $n$ goes to $\infty$, for any $2\leq \alpha<\beta$, we have
\begin {eqnarray}\label {Conv.-U-L}
\hat{\mathbb{E}}[\sup_{t\in[0,T]}|(Y_t^n-U_t)^+|^\alpha]\rightarrow0, \  \hat{\mathbb{E}}[\sup_{t\in[0,T]}|(Y_t^n-L_t)^-|^\alpha]\rightarrow0.
\end {eqnarray}
\end{lemma}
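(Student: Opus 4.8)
The plan is to feed the penalized equation \eqref{equ1} into the pointwise bounds of Lemma \ref{Esti-U-L}, and then to kill the right-hand sides using the uniform estimate of Lemma \ref{Esti-Y} and the decay estimate of Lemma \ref{Esti-D}. The one genuine difficulty is the dependence of $f$ on $Z^n$, for which no uniform bound is yet available at this stage; I remove it by linearization. Writing $a^n_s:=[f(s,Y^n_s,Z^n_s)-f(s,Y^n_s,0)]/Z^n_s$ on $\{Z^n_s\ne0\}$ and $a^n_s:=0$ otherwise, (A2) gives $|a^n_s|\le\kappa$, and $f(s,Y^n_s,Z^n_s)=f(s,Y^n_s,0)+a^n_sZ^n_s$. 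With $\tilde f^n(s):=f(s,Y^n_s,0)$ and $\tilde M^n_t:=\int_0^tZ^n_sdB_s+K^n_t-\int_0^ta^n_sZ^n_sds$, equation \eqref{equ1} takes precisely the form required by Lemma \ref{Esti-U-L}.

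The crucial choice is to take $\tilde{\mathbb{E}}$ to be the time-consistent sublinear expectation defined by the linear $G$-BSDE with driver $(s,z)\mapsto a^n_sz$, which is well defined through Theorem \ref{the1.4} because this driver is $\kappa$-Lipschitz in $z$ and vanishes at $z=0$. For this $\tilde{\mathbb{E}}$ the process $\tilde M^n$ is a martingale: solving the defining linear $G$-BSDE on any $[s,t]$ with terminal datum $\tilde M^n_t-\tilde M^n_s$ returns $\{\tilde M^n_r-\tilde M^n_s\}_{r\in[s,t]}$ as its $Y$-component (with $Z$-part $Z^n$ and decreasing $G$-martingale part $K^n-K^n_s$), so that $\tilde{\mathbb{E}}_s[\tilde M^n_t]=\tilde M^n_s$. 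This verification is the step I expect to be the main obstacle: in the $G$-framework the $ds$-drift $\int_0^\cdot a^n_sZ^n_sds$ cannot be absorbed by the Girsanov transformation, which only removes $d\langle B\rangle$-drifts, and the auxiliary sublinear expectation $\tilde{\mathbb{E}}$ is exactly the device that replaces Girsanov here. Granting it, Lemma \ref{Esti-U-L} together with sublinearity and $|\tilde{\mathbb{E}}_t[\cdot]|\le\tilde{\mathbb{E}}_t[|\cdot|]$ yields
\[(Y^n_t-U_t)^+\le\tilde{\mathbb{E}}_t\Big[\int_t^Te^{-n(s-t)}|f(s,Y^n_s,0)|\,ds\Big]+\tilde{\mathbb{E}}_t\Big[\big|\int_t^Te^{-n(s-t)}dU_s\big|\Big],\]
together with the analogous bound for $(Y^n_t-L_t)^-$ in which $U$ is replaced by $L$.

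To conclude I would dominate $\tilde{\mathbb{E}}$ by $\hat{\mathbb{E}}$: applying the a priori estimate of Theorem \ref{the1.4} to the driver $a^n_sz$, whose Lipschitz constant and hence the resulting constant $C$ are independent of $n$, gives $(\tilde{\mathbb{E}}_t[|X|])^\alpha\le C\hat{\mathbb{E}}_t[|X|^\alpha]$. Raising the displayed bound to the power $\alpha$, taking $\sup_t$ and then $\hat{\mathbb{E}}$, it suffices to show that $\hat{\mathbb{E}}[\sup_t\hat{\mathbb{E}}_t[(\int_t^Te^{-n(s-t)}|f(s,Y^n_s,0)|ds)^\alpha]]$ and $\hat{\mathbb{E}}[\sup_t\hat{\mathbb{E}}_t[|\int_t^Te^{-n(s-t)}dU_s|^\alpha]]$ vanish as $n\to\infty$. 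The second is exactly the quantity controlled by Lemma \ref{Esti-D} with $S=U\in S^\beta_G(0,T)$ (and with $S=L$ for the lower barrier). For the first, Hölder's inequality against the measure $e^{-n(s-t)}ds$ of total mass at most $1/n$ gives $(\int_t^Te^{-n(s-t)}|f(s,Y^n_s,0)|ds)^\alpha\le n^{-(\alpha-1)}\int_0^T|f(s,Y^n_s,0)|^\alpha ds$; since $|f(s,Y^n_s,0)|^\alpha\le C((f^0_s)^\alpha+|Y^n_s|^\alpha)$, the uniform estimate of Lemma \ref{Esti-Y} (at an exponent strictly below $\beta$) and the maximal inequality of Theorem \ref{the1.2} bound $\hat{\mathbb{E}}[\sup_t\hat{\mathbb{E}}_t[\int_0^T|f(s,Y^n_s,0)|^\alpha ds]]$ uniformly in $n$, so this term is $O(n^{-(\alpha-1)})$ and tends to $0$ because $\alpha\ge2$. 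This establishes \eqref{Conv.-U-L}.
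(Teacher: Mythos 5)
Your overall strategy is exactly the paper's: linearize the $Z$-dependence of $f$, define an auxiliary time-consistent sublinear expectation through the $G$-BSDE with driver $(s,z)\mapsto a_sz$, check that $\tilde M^n$ is a martingale under it, invoke Lemma \ref{Esti-U-L}, dominate the auxiliary conditional expectation by $\hat{\mathbb{E}}_t$ via Theorem \ref{the1.4} with $n$-independent constants, and finish with Lemma \ref{Esti-D} plus a H\"older bound for the $ds$-term. However, there is one genuine gap, and it is not the step you flagged as the main obstacle. Your $a^n_s=[f(s,Y^n_s,Z^n_s)-f(s,Y^n_s,0)](Z^n_s)^{-1}I_{\{Z^n_s\neq 0\}}$ is bounded by $\kappa$, but boundedness is not enough: for the auxiliary $G$-BSDE to be well posed (condition (H1)), and for your martingale verification, which relies on existence and uniqueness of the solution in $\mathfrak{S}_G^{\alpha}(0,T)$ and on Lemma \ref{Esti-D} applied afterwards, the coefficient must belong to $M_G^2(0,T)$. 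The map $z\mapsto (f(s,y,z)-f(s,y,0))z^{-1}I_{\{z\neq 0\}}$ is discontinuous at $z=0$, and in the quasi-sure $G$-framework discontinuous functions of an $M_G^2$ process generically fall outside $M_G^2$: these spaces are completions of step processes with quasi-continuous coefficients, and even indicators of Borel sets typically fail to belong to $L_G^1$. The paper closes exactly this hole by an $\varepsilon$-mollification: it picks a Lipschitz $l$ with $I_{[-\varepsilon,\varepsilon]}\leq l\leq I_{[-2\varepsilon,2\varepsilon]}$ and sets $a^{\varepsilon,n}_s=(1-l(Z^n_s))(f(s,Y^n_s,Z^n_s)-f(s,Y^n_s,0))(Z^n_s)^{-1}$, a Lipschitz transformation of $(Y^n_s,Z^n_s)$, hence in $M_G^2(0,T)$, still with $|a^{\varepsilon,n}_s|\leq\kappa$; the price is the residual $m^{\varepsilon,n}_s$ with $|m^{\varepsilon,n}_s|\leq 2\kappa\varepsilon$, which is harmlessly absorbed into the driver $f^{\varepsilon,n}(s)=f(s,Y^n_s,0)+m^{\varepsilon,n}_s$ and, after convolution with $e^{-n(s-t)}$, contributes a term of order $\varepsilon/n$.

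Two smaller remarks. Your aside that in the $G$-framework the Girsanov transformation ``only removes $d\langle B\rangle$-drifts'' misreads the paper's device: Theorem 5.2 of \cite{HJPS2} states precisely that $\tilde B^{\varepsilon,n}_t=B_t-\int_0^ta^{\varepsilon,n}_sds$ is a $G$-Brownian motion under $\tilde{\mathbb{E}}^{\varepsilon,n}$, and Theorem 5.1 of \cite{HJPS2} that $K^n$ remains a martingale under $\tilde{\mathbb{E}}^{\varepsilon,n}$; this is how the paper obtains the martingale property of your $\tilde M^n=\int_0^{\cdot}Z^n_sd\tilde B^{\varepsilon,n}_s+K^n$. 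Your direct verification --- solving the linear $G$-BSDE on $[s,t]$ with terminal datum $\tilde M^n_t-\tilde M^n_s$ and using translation invariance of $\tilde{\mathbb{E}}_s$, which holds since the driver does not depend on $y$ --- is a valid substitute once $a^{\varepsilon,n}\in M_G^2(0,T)$ is secured, so this part of your argument survives the mollification unchanged. Finally, your treatment of the driver term by H\"older's inequality against the measure $e^{-n(s-t)}ds$, combined with Lemma \ref{Esti-Y} and Theorem \ref{the1.2}, is correct under (A1') and in fact spells out a point the paper leaves terse; everything else in your proposal goes through verbatim once $a^{\varepsilon,n}$ replaces your $a^n$.
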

\begin {proof}
 For each given $\varepsilon>0$, we can choose a Lipschitz function $l(\cdot)$ such that $I_{[-\varepsilon,\varepsilon]}\leq l(x)\leq I_{[-2\varepsilon,2\varepsilon]}$. Thus we have
\begin{displaymath}
f(s,Y_s^n,Z_s^n)-f(s,Y_s^n,0)=(f(s,Y_s^n, Z_s^n)-f(s, Y_s^n,0))l(Z_s^n)+a^{\varepsilon,n}_sZ_s^n=:m_s^{\varepsilon,n}+a^{\varepsilon,n}_sZ_s^n,
\end{displaymath}
where $a^{\varepsilon,n}_s=(1-l(Z_s^n))(f(s,Y_s^n, Z_s^n)-f(s, Y_s^n,0))(Z_s^n)^{-1}\in M_G^2(0,T)$ with $|a^{\varepsilon,n}_s|\leq \kappa$. It is easy to check that $|m_s^{\varepsilon,n}|\leq 2\kappa\varepsilon$. Then we can get
\begin{displaymath}
f(s, Y_s^n, Z_s^n)=f(s, Y_s^n,0)+a^{\varepsilon,n}_s Z_s^n+m_s^{\varepsilon,n}.
\end{displaymath}
Now we consider the following $G$-BSDE:
\begin{displaymath}
Y^{\varepsilon,n}_t=\xi+\int_t^T a^{\varepsilon,n}_sZ^{\varepsilon,n}_sds-\int_t^T Z^{\varepsilon,n}_sdB_s-(K^{\varepsilon,n}_T-K^{\varepsilon,n}_t).
\end{displaymath}
For each $\xi\in L_G^p(\Omega_T)$ with $p>1$, define
\begin{displaymath}
\tilde{\mathbb{E}}^{\varepsilon,n}_t[\xi]:=Y^{\varepsilon,n}_t,
\end{displaymath} which is a time-consistent sublinear expectation.
Set $\tilde{B}^{\varepsilon,n}_t=B_t-\int_0^t a^{\varepsilon,n}_s ds$. By Theorem 5.2 in \cite{HJPS2}, $\{\tilde{B}^{\varepsilon,n}_t\}$ is a $G$-Brownian motion under $\tilde{\mathbb{E}}^{\varepsilon,n}[\cdot]$.

We rewrite $G$-BSDE \eqref{equ1} as the following
\begin{eqnarray} \label {BSDE-Gisanov}
\begin {split}
Y_t^n=&\xi+\int_t^T f^{\varepsilon,n}(s)ds-\int_t^Tn(Y_s^n-U_s)^+ds+\int_t^Tn(Y_s^n-L_s)^-ds\\
 &-\int_t^T Z_s^nd\tilde{B}^{\varepsilon,n}_s-(K_T^n-K_t^n),
\end {split}
\end{eqnarray}
where $f^{\varepsilon,n}(s)=f(s, Y_s^n,0)+m^{\varepsilon,n}_s$. Since $K^n$ is a martingale under $\tilde{\mathbb{E}}^{\varepsilon,n}[\cdot]$ by Theorem 5.1 in \cite {HJPS2},   it follows from (\ref {Esti-U}) in Lemma \ref {Esti-U-L}	that
\begin{align*}
(Y_t^n-U_t)^+\leq& \bigg|\tilde{\mathbb{E}}^{\varepsilon, n}_t[\int_t^Te^{-n(s-t)}f^{\varepsilon, n}(s)ds+\int_t^Te^{-n(s-t)}dU_s]\bigg|.
\end{align*}
By Theorem \ref{the1.4}, for $2\leq \alpha<\beta$, it follows that
		\begin{equation}\begin{split} \label {Conv.-U-Proof}
		\hat{\mathbb{E}}[\sup_{t\in[0,T]}|(Y^n_t-U_t)^+|^\alpha]\leq &\hat{\mathbb{E}}\bigg[\sup_{t\in[0,T]}\bigg|\tilde{\mathbb{E}}^{\varepsilon, n}_t[\int_t^Te^{-n(s-t)}f^{\varepsilon, n}(s)ds+\int_t^Te^{-n(s-t)}dU_s]\bigg|^{\alpha}\bigg]\\
		\leq & C_{\alpha}\hat{\mathbb{E}}\bigg[\sup_{t\in[0,T]}\hat{\mathbb{E}}_t[\bigg|\int_t^Te^{-n(s-t)}f^{\varepsilon, n}(s)ds+\int_t^Te^{-n(s-t)}dU_s\bigg|^{\alpha}]\bigg],
		\end{split}\end{equation}
which converges to $0$ as $n$ goes to $\infty$ by Lemma \ref {Esti-D}. Similarly, we can prove \[\lim_{n\rightarrow\infty}\hat{\mathbb{E}}[\sup_{t\in[0,T]}|(Y_t^n-L_t)^-|^\alpha]=0.\]
\end{proof}

\subsection {Uniform estimates of $Z^n$, $K^n$, $A^{n,-}$ and $A^{n,+}$}
In this subsection, we give the uniform estimates for $Z^n$, $K^n$, $A^{n,+}$ and $A^{n,-}$ under the Assumptions (A1)-(A4).  To this end, we prove that $(Y^n-U)^+$ converges to $0$ with the explicit rate $\frac{1}{n}$, which requires that the upper obstacle to be a generalized $G$-It\^{o} process.

\begin{lemma}\label{Conv-rate-A}
For $2\leq \alpha<\beta$, there exists a constant $C$ independent of $n$, such that
\begin{displaymath}
\hat{\mathbb{E}}[\sup_{t\in[0,T]}|(Y_t^n-U_t)^+|^\alpha]\leq \frac{C}{n^\alpha}.
\end{displaymath}
\end{lemma}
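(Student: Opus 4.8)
The plan is to upgrade the qualitative convergence of $(Y^n_t-U_t)^+$ obtained in Lemma \ref{Conv-U-L} to the explicit rate $\frac{1}{n}$, exploiting the fact that under (A3) the upper obstacle $U$ is itself a generalized $G$-It\^o process. The starting point is the estimate (\ref{Conv.-U-Proof}) from the proof of Lemma \ref{Conv-U-L}, namely
\begin{displaymath}
\hat{\mathbb{E}}[\sup_{t\in[0,T]}|(Y^n_t-U_t)^+|^\alpha]\leq C_\alpha\hat{\mathbb{E}}\bigg[\sup_{t\in[0,T]}\hat{\mathbb{E}}_t\Big[\Big|\int_t^Te^{-n(s-t)}f^{\varepsilon,n}(s)ds+\int_t^Te^{-n(s-t)}dU_s\Big|^\alpha\Big]\bigg].
\end{displaymath}
First I would control the two terms inside the conditional expectation separately. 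For the driver term $\int_t^T e^{-n(s-t)}f^{\varepsilon,n}(s)ds$, since $f^{\varepsilon,n}(s)=f(s,Y_s^n,0)+m_s^{\varepsilon,n}$, the Assumptions (A1)--(A2) together with the uniform bound on $Y^n$ from Lemma \ref{Esti-Y} give $f(\cdot,Y^n,0)\in S_G^\beta(0,T)$ with a norm bounded independently of $n$; the kernel $\int_t^T e^{-n(s-t)}ds\le \frac1n$ then produces a factor $\frac1n$, so this contribution is $O(n^{-\alpha})$ up to the error $2\kappa\varepsilon$ coming from $m^{\varepsilon,n}$.

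The heart of the matter is the obstacle term $\int_t^T e^{-n(s-t)}dU_s$. Here I would substitute the generalized $G$-It\^o decomposition $U_t=U_0+\int_0^t b(s)ds+\int_0^t\sigma(s)dB_s+K_t$ from (A3) and use the definition of the integral $\int_s^t e^{-nu}dU_u$ from Lemma \ref{Esti-D}, which precisely handles the bounded-variation and martingale parts of $U$. This is exactly the object $D^n(U)=i_n$ analysed in Lemma \ref{Esti-D}, where the key quantitative estimates $|i_n(t)|\le C_\alpha n^{-\alpha}\hat{\mathbb{E}}_t[\sup_s(|b(s)|+\cdots)^\alpha]+C_\alpha n^{-\alpha/2}\hat{\mathbb{E}}_t[\sup_s|\sigma(s)|^\alpha]$ were established for smooth integrands. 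The main obstacle is that Lemma \ref{Esti-D} as stated only yields convergence to $0$, with the sharp rate $n^{-\alpha}$ (from the drift) competing against the slower rate $n^{-\alpha/2}$ (from the stochastic integral against $\sigma\,dB$). To recover the full rate $\frac{1}{n}$ I expect one must integrate by parts more carefully on the martingale part: writing $\int_t^T e^{-n(s-t)}\sigma(s)dB_s$ and re-expressing it so that the $\sigma\,dB$ contribution is absorbed into the $G$-martingale component of the solution rather than estimated crudely by Burkholder--Davis--Gundy. Concretely, I would apply Lemma \ref{Esti-U-L} with the comparison process $S=U$ and then exploit that the stochastic-integral remainder is itself a $\tilde{\mathbb{E}}^{\varepsilon,n}$-martingale increment whose conditional expectation vanishes, leaving only the $O(n^{-1})$ drift and obstacle-variation contributions.

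Assembling these pieces, I would first fix $\varepsilon=\varepsilon_n$ tending to $0$ fast enough (e.g. $\varepsilon_n\sim n^{-1}$) so that the $m^{\varepsilon,n}$-error $2\kappa\varepsilon_n$ is itself $O(n^{-1})$ and does not spoil the rate, then combine the drift estimate, the smooth-obstacle estimate from Lemma \ref{Esti-D}, and the density argument (using the uniform continuity of $D^n$ proved there and Theorem \ref{the1.2}) to pass from the dense class of $G$-It\^o processes to the general $U\in S_G^\beta(0,T)$ of (A3). Raising to the power $\alpha$ and taking the outer $\hat{\mathbb{E}}[\sup_t\hat{\mathbb{E}}_t[\cdot]]$, Doob's maximal inequality in the form of Theorem \ref{the1.2} controls the nested expectation, and the final bound $\hat{\mathbb{E}}[\sup_{t\in[0,T]}|(Y^n_t-U_t)^+|^\alpha]\le C n^{-\alpha}$ follows with $C$ independent of $n$. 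The delicate point throughout is keeping the constant uniform in $n$, which is why the uniform $S_G^\beta$-bounds on $Y^n$, $b$, $\sigma$ and the uniform continuity of the family $D^n$ are indispensable.
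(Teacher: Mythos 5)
Your central mechanism is the paper's own: starting from the estimate (\ref{Conv.-U-Proof}), substitute the (A3) decomposition of $U$, rewrite $\sigma(s)dB_s=\sigma(s)d\tilde{B}^{\varepsilon,n}_s+a^{\varepsilon,n}_s\sigma(s)ds$ so that the stochastic-integral part of $\int_t^Te^{-n(s-t)}dU_s$ has vanishing conditional $\tilde{\mathbb{E}}^{\varepsilon,n}$-expectation, and absorb the rest into the drift $b^{\varepsilon,n}=b+a^{\varepsilon,n}\sigma$, which sits under the kernel $\int_t^Te^{-n(s-t)}ds\le \frac1n$; this Girsanov absorption, applied through Lemma \ref{Esti-U-L} with $S=U$, is exactly how the paper proves Lemma \ref{Conv-rate-A}, concluding via Theorem \ref{the1.4} and Theorem \ref{the1.2} together with the uniform $S_G^\beta$-bounds on $f(\cdot,Y^n,0)$, $b$ and $\sigma$.

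Three peripheral steps in your assembly should be repaired, the first being a genuine misstep if executed literally. (i) The closing ``density argument'' is both unnecessary and fatal to the rate: under (A3) the obstacle $U$ is \emph{itself} a generalized $G$-It\^o process with $b,\sigma\in S_G^\beta(0,T)$, so there is nothing to approximate; moreover the uniform continuity of $D^n$ in Lemma \ref{Esti-D} carries no dependence on $n$, so passing through that modulus can only yield convergence to $0$ with no rate, and even for step coefficients Lemma \ref{Esti-D} gives only $n^{-\alpha/2}$ from the $\sigma\,dB$ term --- precisely the deficiency you correctly diagnosed two paragraphs earlier. The whole point of the Girsanov route is that Lemma \ref{Esti-D} is never invoked in the rate lemma. (ii) No tuning $\varepsilon=\varepsilon_n\sim n^{-1}$ is needed: $m^{\varepsilon,n}$ is uniformly bounded by $2\kappa\varepsilon$ and enters inside $\int_t^Te^{-n(s-t)}f^{\varepsilon,n}(s)ds$, so its contribution is already $O(\varepsilon/n)$ for fixed $\varepsilon$; there is no additive $2\kappa\varepsilon$ error outside the $\frac1n$ factor. (iii) The non-increasing $G$-martingale part $K$ of $U$ must be discarded by its sign, $\int_t^Te^{-n(s-t)}dK_s\le 0$, in the upper bound for the nonnegative quantity $(Y^n_t-U_t)^+$; it cannot be treated as an $O(n^{-1})$ ``obstacle-variation'' contribution, since mass of $dK$ placed just after $t$ is weighted by $e^{-n(s-t)}\approx 1$ and is not small relative to the variation of $K$. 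With these corrections your proof coincides with the paper's.
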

\begin{proof} Now $U_t=U_0+\int_0^tb(s)ds+\int_0^t\sigma(s)dB_s+K_t$ with $b, \ \sigma\in S_G^\beta(0,T)$, and $K\in S_G^\beta(0,T)$  a non-increasing $G$-martingale. Below, we employ the notations in the proof of Lemma \ref {Conv-U-L}.

We rewrite $U_t$ as
\begin {eqnarray}\label {U-Gisanov}
U_t=U_0+\int_0^tb^{\varepsilon,n}(s)ds+\int_0^t\sigma(s)d\tilde{B}^{\varepsilon, n}_s+K_t,
\end {eqnarray}		
where $b^{\varepsilon,n}(s)=b(s)+a_s^{\varepsilon, n}\sigma(s)$. By (\ref {Conv.-U-Proof}), we have, for $2\leq \alpha<\beta$,
\begin{align*}
		\hat{\mathbb{E}}[\sup_{t\in[0,T]}|(Y^n_t-U_t)^+|^\alpha]\leq &\hat{\mathbb{E}}\bigg[\sup_{t\in[0,T]}\bigg|\tilde{\mathbb{E}}^{\varepsilon, n}_t[\int_t^Te^{-n(s-t)}f^{\varepsilon, n}(s)ds+\int_t^Te^{-n(s-t)}dU_s]\bigg|^{\alpha}\bigg]\\
		\leq & \hat{\mathbb{E}}\bigg[\sup_{t\in[0,T]}\bigg|\tilde{\mathbb{E}}^{\varepsilon, n}_t[\int_t^Te^{-n(s-t)}(|f^{\varepsilon, n}(s)|+|b^{\varepsilon,n}(s)|)ds]\bigg|^{\alpha}\bigg].
\end{align*}
By Theorem \ref{the1.4}, it follows that
		\begin{align*}
		\hat{\mathbb{E}}[\sup_{t\in[0,T]}|(Y^n_t-U_t)^+|^\alpha]\leq &\frac{1}{n^\alpha}\hat{\mathbb{E}}\bigg[\sup_{t\in[0,T]}\bigg|\tilde{\mathbb{E}}^{\varepsilon, n}_t[\sup_{s\in[0,T]}|f^{\varepsilon, n}(s)+b^{\varepsilon, n}(s)|]\bigg|^{\alpha}\bigg]\\
		\leq &C_{\alpha}\frac{1}{n^\alpha}\hat{\mathbb{E}}\bigg[\sup_{t\in[0,T]}\hat{\mathbb{E}}_t[\sup_{s\in[0,T]}|f^{\varepsilon, n}(s)+b^{\varepsilon, n}(s)|^{\alpha}]\bigg].
		\end{align*}
Since $\hat{\mathbb{E}}\bigg[\sup_{t\in[0,T]}\hat{\mathbb{E}}_t[\sup_{s\in[0,T]}|f^{\varepsilon, n}(s)+b^{\varepsilon, n}(s)|^{\alpha}]\bigg]$ are uniformly bounded, we get the desired result.\end{proof}
\begin{lemma}\label{Esti-A-K}
For $2\leq \alpha<\beta$, there exists a constant $C$ independent of $n$, such that
\begin{displaymath}
\hat{\mathbb{E}}[|K_T^n|^\alpha]\leq C, \ \hat{\mathbb{E}}[|A_T^{n,-}|^\alpha]\leq C,\  \hat{\mathbb{E}}[|A_T^{n,+}|^\alpha]\leq C, \ and \  \mathbb{\hat{E}}[(\int_{0}^{T}|Z^n_{s}|^{2}ds)^{\frac{\alpha}{2}}]\leq C.
\end{displaymath}
\end{lemma}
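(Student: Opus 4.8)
The plan is to bound the three finite-variation increments in a carefully chosen order, exploiting the fact that the downward-pushing process $A^{n,-}$ can be controlled directly from the convergence rate of Lemma \ref{Conv-rate-A}, whereas the upward-pushing process $A^{n,+}$ and the non-increasing $G$-martingale $K^n$ are entangled and must be separated only at the very end. First I would bound $A^{n,-}_T$. Since $A_T^{n,-}=n\int_0^T(Y_s^n-U_s)^+ds\leq nT\sup_{s\in[0,T]}(Y_s^n-U_s)^+$, raising to the power $\alpha$ and taking $G$-expectation gives
\[
\hat{\mathbb{E}}[|A_T^{n,-}|^\alpha]\leq n^\alpha T^\alpha\,\hat{\mathbb{E}}[\sup_{t\in[0,T]}|(Y_t^n-U_t)^+|^\alpha]\leq C,
\]
where the final inequality is exactly Lemma \ref{Conv-rate-A}, so that the factor $n^\alpha$ is absorbed by the rate $n^{-\alpha}$. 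This is the only step in which the assumption that $U$ is a generalized $G$-It\^o process is essential.

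Next I would bound $Z^n$ by casting \eqref{equ} into the form required by Theorem \ref{Esti-Z-GBSDE}. The two upward-pushing terms $-K^n$ and $A^{n,+}$ combine into a single non-increasing process $\mathcal{K}^n:=K^n-A^{n,+}$ (a non-increasing $G$-martingale minus an increasing process, hence non-increasing, with $\mathcal{K}^n_0=0$), while the downward-pushing term gives the non-increasing process $\mathcal{A}^n:=-A^{n,-}$ with $\mathcal{A}^n_0=0$; indeed
\[
-(\mathcal{K}^n_T-\mathcal{K}^n_t)+(\mathcal{A}^n_T-\mathcal{A}^n_t)=-(K^n_T-K^n_t)+(A_T^{n,+}-A_t^{n,+})-(A_T^{n,-}-A_t^{n,-}).
\]
Applying Theorem \ref{Esti-Z-GBSDE} to this representation, the crucial point is the $\min$ appearing in $m_\alpha^{\mathcal{A}^n,\mathcal{K}^n}\leq\hat{\mathbb{E}}[|\mathcal{A}^n_T|^\alpha]=\hat{\mathbb{E}}[|A_T^{n,-}|^\alpha]$, which is already bounded by the first step, so that the (still uncontrolled) terminal value $\mathcal{K}^n_T$ never enters. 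Combining this with the uniform bound $\hat{\mathbb{E}}[\sup_{t}|Y_t^n|^\alpha]\leq C$ from Lemma \ref{Esti-Y} and $\hat{\mathbb{E}}[(\int_0^T|f(s,0,0)|ds)^\alpha]\leq C$ from (A1) yields $\hat{\mathbb{E}}[(\int_0^T|Z_s^n|^2ds)^{\alpha/2}]\leq C$ uniformly in $n$.

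Finally I would separate $A^{n,+}_T$ and $K^n_T$. Evaluating \eqref{equ} at $t=0$ gives
\[
A_T^{n,+}-K_T^n=Y_0^n-\xi-\int_0^T f(s,Y_s^n,Z_s^n)ds+\int_0^T Z_s^ndB_s+A_T^{n,-}.
\]
Every term on the right is uniformly bounded in $L_G^\alpha$: $Y_0^n$ and $\xi$ by Lemma \ref{Esti-Y} and (A4); the $f$-integral by (A2), Lemma \ref{Esti-Y}, and the $Z^n$-estimate just obtained; the stochastic integral by the Burkholder--Davis--Gundy inequality (Proposition \ref{the1.3}) together with the $Z^n$-estimate; and $A_T^{n,-}$ by the first step. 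Hence $\hat{\mathbb{E}}[|A_T^{n,+}-K_T^n|^\alpha]\leq C$. Since $A^{n,+}_T\geq0$ and $-K^n_T\geq0$ (because $K^n$ is a non-increasing $G$-martingale with $K^n_0=0$), the nonnegative quantity $A^{n,+}_T-K^n_T$ dominates each of its two summands, so $\hat{\mathbb{E}}[|A_T^{n,+}|^\alpha]$ and $\hat{\mathbb{E}}[|K_T^n|^\alpha]$ are both bounded by $\hat{\mathbb{E}}[|A_T^{n,+}-K_T^n|^\alpha]\leq C$.

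The main obstacle is precisely the entanglement of $A^{n,+}$ and $K^n$: the relation at $t=0$ controls only the difference $A_T^{n,+}-K_T^n$, never the two pieces individually, and the $Z^n$-estimate cannot legitimately invoke $K^n_T$ either. The resolution hinges on two structural facts — the explicit $n^{-1}$ rate of Lemma \ref{Conv-rate-A}, which bounds $A^{n,-}_T$ and thereby activates the favourable side of the $\min$ in Theorem \ref{Esti-Z-GBSDE}, and the sign constraints $A^{n,+}_T\geq0\geq K^n_T$, which disentangle the pair at the end. All constants produced along the way depend only on $\alpha,T,\kappa,\underline{\sigma}$ and the data norms, and not on $n$.
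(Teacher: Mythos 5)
Your proposal is correct and follows essentially the same route as the paper's proof: bound $A^{n,-}_T$ via the $n^{-1}$ rate of Lemma \ref{Conv-rate-A}, feed the penalized equation into Theorem \ref{Esti-Z-GBSDE} so that the $\min$ in $m_\alpha^{A,K}$ is activated by the already-controlled $A^{n,-}_T$ (the paper leaves the decomposition $K^n-A^{n,+}$, $-A^{n,-}$ implicit, which you usefully spell out), and finally bound $A^{n,+}_T-K^n_T$ from the equation at $t=0$ and separate the two summands by their opposite signs. No gaps; your constants and auxiliary estimates (BDG, Lipschitz bound on the $f$-integral) match the paper's implicit steps.
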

\begin{proof}
By Lemma \ref{Conv-rate-A}, there exists a constant $C$ independent of $n$ such that \[\hat{\mathbb{E}}[|A_T^{n,-}|^\alpha]=n^\alpha\hat{\mathbb{E}}[(\int_0^T (Y_s^n-U_s)^+ds)^\alpha]\leq C.\]
Then, it follows from Theorem \ref{Esti-Z-GBSDE} that 	$\mathbb{\hat{E}}[(\int_{0}^{T}|Z^n_{s}|^{2}ds)^{\frac{\alpha}{2}}]$ are uniformly bounded. Noting that
\begin{displaymath}
		K_T^n-A_T^{n,+}=\xi-Y_0^n+\int_0^T f(s,Y_s^n,Z_s^n)ds-\int_0^T Z_s^ndB_s+A_T^{n,-},
\end{displaymath} we conclude that $\hat{\mathbb{E}}[|K_T^n-A_T^{n,+}|^\alpha]$ are uniformly bounded. Since $K_T^n$ and $-A_T^{n,+}$ are non-positive, the proof is complete.
	\end{proof}

\subsection {Proof of Theorem \ref {main}}
In this subsection, we prove that $Y^n$, $Z^n$, and $A^n=A^{n,+}-K^n-A^{n,-}$, $n\geq 1$ are Cauchy sequences with respect to the norms $\|\cdot\|_{S^\alpha_G}$, $\|\cdot\|_{H^\alpha_G}$  and $\|\cdot\|_{S^\alpha_G}$, respectively, and that their limits are a solution to the doubly reflected $G$-BSDE.	

	\begin{lemma}\label{Conv-Y-Z-A}
		For  $2\leq \alpha<\beta$, we have
		\[\lim_{n,m\rightarrow\infty}\hat{\mathbb{E}}[\sup_{t\in[0,T]}|Y_t^n-Y_t^m|^\alpha]=0,\ \lim_{n,m\rightarrow\infty}\hat{\mathbb{E}}[(\int_0^T|Z_s^n-Z_s^m|^2ds)^{\frac{\alpha}{2}}]=0, \
		 \lim_{n,m\rightarrow\infty}\hat{\mathbb{E}}[\sup_{t\in[0,T]}|A_t^n-A_t^m|^\alpha]=0.\]
	\end{lemma}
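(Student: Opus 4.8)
The plan is to prove the three Cauchy properties in the order $Y^n$, then $Z^n$, then $A^n$, because the latter two will follow from the $S_G^\alpha$-convergence of $Y^n$ together with the uniform bounds already secured in Lemmas \ref{Esti-Y}, \ref{Conv-U-L} and \ref{Esti-A-K}. Throughout I would write $\hat Y=Y^n-Y^m$, $\hat Z=Z^n-Z^m$ and $\hat A=A^n-A^m$, so that $\hat Y$ solves the $G$-BSDE driven by $f(\cdot,Y^n,Z^n)-f(\cdot,Y^m,Z^m)$, with the (non-monotone) driver $\hat A$ and zero terminal value.

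For the $S_G^\alpha$-estimate of $\hat Y$ I would mimic the computation in Proposition \ref{lem0} and Lemma \ref{Esti-Y}: apply the $G$-It\^o formula to $H_t^{\alpha/2}e^{rt}$ with $H_t=|\hat Y_t|^2$. Using (A2) and Young's inequality, the generator difference contributes a term dominated by $\tilde r\int_t^Te^{rs}H_s^{\alpha/2}ds$ plus a fraction of $\int_t^T e^{rs}H_s^{\alpha/2-1}\hat Z_s^2\,d\langle B\rangle_s$, both of which are absorbed into the left-hand side for $r$ large. The substantive new term is the reflection cross term $\int_t^T \alpha e^{rs}H_s^{\alpha/2-1}\hat Y_s\,d\hat A_s$, where $\hat A=(A^{n,+}-A^{m,+})-(A^{n,-}-A^{m,-})-(K^n-K^m)$.

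The heart of the argument is to control these six pieces by a sign analysis on the supports of the penalties. Since $dA^{n,+}_s=n(Y^n_s-L_s)^-ds$ charges only $\{Y^n_s\le L_s\}$, on that set $\hat Y_s^+=(Y^n_s-Y^m_s)^+\le (L_s-Y^m_s)^+=(Y^m_s-L_s)^-$, so that $\int_t^T \hat Y_s\,dA^{n,+}_s\le\int_t^T(Y^m_s-L_s)^-\,dA^{n,+}_s$; the three remaining penalty increments are handled symmetrically and bounded, respectively, by $(Y^n-L)^-$, $(Y^m-U)^+$ and $(Y^n-U)^+$ integrated against the corresponding non-decreasing process. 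For the two $K$-increments, $dK^n_s,dK^m_s\le0$ let one discard the favorably signed parts and keep only $-\hat Y_s^+dK^n_s-\hat Y_s^-dK^m_s$, which together with $\hat Y_s\hat Z_s\,dB_s$ assembles into a $G$-martingale by Lemma 3.4 of \cite{HJPS1}. Taking conditional $G$-expectation kills the martingale, and each surviving cross term is estimated by H\"older's inequality as a product of $\big(\hat{\mathbb E}[\sup_t|\hat Y_t|^\alpha]\big)^{(\alpha-2)/\alpha}$ (uniformly bounded by Lemma \ref{Esti-Y}, so no circularity arises), a factor $\big(\hat{\mathbb E}[\sup_t((Y^\bullet_t-L_t)^-)^\alpha]\big)^{1/\alpha}$ or its $(\cdot-U)^+$ analogue (tending to $0$ by Lemma \ref{Conv-U-L}), and a uniformly bounded terminal value $\big(\hat{\mathbb E}[|A^{\bullet,\pm}_T|^\alpha]\big)^{1/\alpha}$ (Lemma \ref{Esti-A-K}). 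Combining and invoking the Doob-type inequality Theorem \ref{the1.2} then gives $\hat{\mathbb E}[\sup_t|\hat Y_t|^\alpha]\to 0$ as $n,m\to\infty$.

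For $Z^n$ I would apply the $G$-It\^o formula to $|\hat Y_t|^2$ and rearrange to isolate $\int_0^T|\hat Z_s|^2d\langle B\rangle_s$; bounding the stochastic integral by Proposition \ref{the1.3} and the generator as above, the only nonstandard term $\int_0^T\hat Y_s\,d\hat A_s$ is controlled by $\sup_t|\hat Y_t|\cdot\big(Var_0^T(A^n)+Var_0^T(A^m)\big)$, and since $Var_0^T(A^n)=A^{n,+}_T+A^{n,-}_T+|K^n_T|$ is uniformly bounded in $L_G^\alpha$ by Lemma \ref{Esti-A-K}, H\"older together with the $Y$-Cauchy property yields $\hat{\mathbb E}[(\int_0^T|\hat Z_s|^2ds)^{\alpha/2}]\to 0$. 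The $A^n$-Cauchy property is then read off from the equation: writing $A^n_t=Y^n_0-Y^n_t-\int_0^t f(s,Y^n_s,Z^n_s)ds+\int_0^t Z^n_s\,dB_s$, the difference $A^n_t-A^m_t$ is bounded by $\sup_t|\hat Y_t|$, $\int_0^T|f(s,Y^n_s,Z^n_s)-f(s,Y^m_s,Z^m_s)|ds$ and $\sup_t|\int_0^t\hat Z_s\,dB_s|$, all of which converge to $0$ by the first two steps, (A2) and Proposition \ref{the1.3}. I expect the main obstacle to be precisely the cross term: verifying the sign bounds on the penalty supports and, crucially, checking that the leftover $K$-contributions form a genuine $G$-martingale that vanishes under $\hat{\mathbb E}_t[\cdot]$ — this is where the lack of a classical Skorohod condition and the non-monotonicity of $\hat A$ make the $G$-framework delicate, and where Lemmas \ref{Conv-U-L} and \ref{Esti-A-K} are indispensable.
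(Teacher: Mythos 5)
Your proposal is correct and follows essentially the same route as the paper's proof: the same It\^{o} computation on $H_t^{\alpha/2}e^{rt}$, the same sign analysis of the penalty cross terms (your support argument yields exactly the paper's bound $\Delta_s$, and your martingale part $(\hat{Y}_s)^+dK^n_s+(\hat{Y}_s)^-dK^m_s$ is the correct combination --- the paper's displayed $M$ has the indices $n,m$ apparently swapped), and the same use of Lemmas \ref{Esti-Y}, \ref{Conv-U-L} and \ref{Esti-A-K} for the $Y$-estimate, from which the $Z$- and $A$-Cauchy properties follow just as in the paper. The only point to tighten: to control $\hat{\mathbb{E}}[\sup_{t\in[0,T]}\hat{\mathbb{E}}_t[\int_0^T\Delta_s\,ds]]$ via Theorem \ref{the1.2} you must run your final H\"{o}lder estimate at an exponent $\gamma>1$ (i.e.\ bound $\hat{\mathbb{E}}[(\int_0^T\Delta_s\,ds)^{\gamma}]$ for some $1<\gamma<\beta/\alpha$, not merely the first moment as your exponents $(\alpha-2)/\alpha$, $1/\alpha$, $1/\alpha$ suggest), which your uniform bounds permit since Lemmas \ref{Esti-Y}, \ref{Conv-U-L} and \ref{Esti-A-K} hold for every order below $\beta$ --- exactly the choice the paper makes.
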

	
	\begin{proof}
		For any $r>0$,  and $n, m\in\mathbb{N}$, set
$$
\begin{array}{lll}{\hat{Y}_t=Y_t^n-Y_t^m,} & {\hat{Z}_t=Z_t^n-Z_t^m,} & {\hat{K}_t=K_t^n-K_t^m,} \\ {\hat{A}_t^+=A_t^{n,+}-A_t^{m,+},} & {\hat{A}_t^-=A_t^{n,-}-A_t^{m,-},} & {\hat{f}_t=f(t,Y_t^n,Z_t^n)-f(t,Y_t^m,Z_t^m).} \end{array}
$$		
Denote $H_t=|\hat{Y}_t|^2$. Applying It\^{o}'s formula to $H_t^{\alpha/2}e^{rt}$, we  get
		\begin{displaymath}
		\begin{split}
		&\quad H_t^{\alpha/2}e^{rt}+\int_t^T re^{rs}H_s^{\alpha/2}ds+\int_t^T \frac{\alpha}{2} e^{rs}
		H_s^{\alpha/2-1}(\hat{Z}_s)^2d\langle B\rangle_s\\
		&=
		\alpha(1-\frac{\alpha}{2})\int_t^Te^{rs}H_s^{\alpha/2-2}(\hat{Y}_s)^2(\hat{Z}_s)^2d\langle B\rangle_s
		+\int_t^T\alpha e^{rs}H_s^{\alpha/2-1}\hat{Y}_sd(\hat{A}_s^+-\hat{A}_s^-)\\
		&\quad+\int_t^T{\alpha} e^{rs}H_s^{\alpha/2-1}\hat{Y}_s\hat{f}_sds-\int_t^T\alpha e^{rs}H_s^{\alpha/2-1}(\hat{Y}_s\hat{Z}_sdB_s+\hat{Y}_sd\hat{K}_s).
		\end{split}
		\end{displaymath}
Noting that $A_t^{n,-}=n\int_0^t (Y_s^n-U_s)^+ds$, $A_t^{n,+}=n\int_0^t (Y_s^n-L_s)^-ds$, we have
\begin{align*}
 &\int_t^T\alpha e^{rs}H_s^{\alpha/2-1}\hat{Y}_sd(\hat{A}_s^+-\hat{A}_s^-)\\
=&\int_t^T\alpha e^{rs}H_s^{\alpha/2-1}\bigg[(Y^n_s-L_s)-(Y^m_s-L_s)\bigg](dA^{n,+}_s-dA^{m,+}_s)\\
 &-\int_t^T\alpha e^{rs}H_s^{\alpha/2-1}\bigg[(Y^n_s-U_s)-(Y^m_s-U_s)\bigg](dA^{n,-}_s-dA^{m,-}_s)\\
\le& \int_t^T\alpha e^{rs}H_s^{\alpha/2-1}\bigg[(Y^n_s-L_s)^-dA^{m,+}_s+(Y^m_s-L_s)^{-}dA^{n,+}_s\bigg]\\
 &+\int_t^T\alpha e^{rs}H_s^{\alpha/2-1}\bigg[(Y^n_s-U_s)^{+}dA^{m,-}_s+(Y^m_s-U_s)^+dA^{n,-}_s\bigg]=:\int_t^T\Delta_sds.
\end{align*}
Therefore,
\begin{displaymath}
		\begin{split}
		&\quad H_t^{\alpha/2}e^{rt}+\int_t^T re^{rs}H_s^{\alpha/2}ds+\int_t^T \frac{\alpha}{2} e^{rs}
		H_s^{\alpha/2-1}(\hat{Z}_s)^2d\langle B\rangle_s\\
		&\le\alpha(1-\frac{\alpha}{2})\int_t^Te^{rs}H_s^{\alpha/2-2}(\hat{Y}_s)^2(\hat{Z}_s)^2d\langle B\rangle_s+\int_t^T{\alpha} e^{rs}H_s^{\alpha/2-1}\hat{Y}_s\hat{f}_sds\\
		&\quad+\int_t^T\Delta_sds-(M_T-M_t	),
		\end{split}
		\end{displaymath}
		where $M_t=\int_0^t \alpha e^{rs}H_s^{\alpha/2-1}(\hat{Y}_s\hat{Z}_sdB_s+(\hat{Y}_s)^+dK_s^m+(\hat{Y}_s)^-dK_s^n)$ is a $G$-martingale. Applying the H\"{o}lder inequality, we have
		\begin{displaymath}
		\int_t^T{\alpha} e^{rs}H_s^{\frac{\alpha-1}{2}}|\hat{f}_s|ds
		\leq (\alpha \kappa+\frac{\alpha \kappa^2}{\underline{\sigma}^2(\alpha-1)})\int_t^T e^{rs}H_s^{\alpha/2}ds
		+\frac{\alpha(\alpha-1)}{4}\int_t^Te^{rs}H_s^{\alpha/2-1}(\hat{Z}_s)^2d\langle B\rangle_s.
		\end{displaymath}
		Letting $r=1+\alpha \kappa+\frac{\alpha \kappa^2}{\underline{\sigma}^2(\alpha-1)}$, we have
		\begin{align*}
		H_t^{\alpha/2}e^{rt}+(M_T-M_t)\leq	\int_t^T\Delta_sds.		
		\end{align*}
		Taking conditional expectation on both sides of the above inequality, it follows that
		\begin{displaymath}\label{eq1.5}
		H_t^{\alpha/2}e^{rt}\leq
		\hat{\mathbb{E}}_t[\int_t^T \Delta_sds].
		\end{displaymath}
		Consequently, we have
		\begin{equation}\label{e1.6}
		\hat{\mathbb{E}}[\sup_{t\in[0,T]}|\hat{Y}_t|^\alpha]\leq \hat{\mathbb{E}}[\sup_{t\in[0,T]}\hat{\mathbb{E}}_t[\int_0^T \Delta_sds]].
		\end{equation}
		By symmetry and Theorem \ref{the1.2}, it suffices to prove that there exists some $\gamma>1$, such that
		\begin{equation}
		\lim_{n,m\rightarrow \infty}\hat{\mathbb{E}}[(\int_0^T H_s^{\alpha/2-1}(Y_s^n-L_s)^-dA^{m,+}_s)^{\gamma}]=0.
		\end{equation}
For $1<\gamma<\beta/\alpha$,
\begin{align*}
 &\hat{\mathbb{E}}[(\int_0^T H_s^{\alpha/2-1}(Y_s^n-L_s)^-dA^{m,+}_s)^{\gamma}]\\
\le&\hat{\mathbb{E}}[\sup_{s\in[0,T]}|\hat{Y}_s|^{(\alpha-2)\gamma}\sup_{s\in[0,T]}\big((Y_s^n-L_s)^-\big)^{\gamma}\big(A^{m,+}_T\big)^{\gamma}]\\
\le&\bigg(\hat{\mathbb{E}}[\sup_{s\in[0,T]}|\hat{Y}_s|^{\alpha\gamma}]\bigg)^{\frac{\alpha}{\alpha-2}}\bigg(\hat{\mathbb{E}}[\sup_{s\in[0,T]}\big((Y_s^n-L_s)^-\big)^{\alpha\gamma}]	\bigg)^{\frac{1}{\alpha}}\hat{\mathbb{E}}[\big(A^{m,+}_T\big)^{\alpha\gamma}]^{\frac{1}{\alpha}}	,
\end{align*} which converges to $0$ as $n$ goes to $\infty$ by Lemma \ref{Esti-Y}, Lemma \ref {Conv-U-L} and Lemma \ref{Esti-A-K}.
		
 By a similar analysis as in the proof of Theorem 5.1 in \cite{LPSH}, for some $2\leq \alpha<\beta$, we get that
		\begin{align*}
		&\hat{\mathbb{E}}[(\int_0^T |\hat{Z}_t|^2dt)^{\frac{\alpha}{2}}]\leq C\{\hat{\mathbb{E}}[\sup_{t\in[0,T]}|\hat{Y}_t|^\alpha]+(\hat{\mathbb{E}}[\sup_{t\in[0,T]}|\hat{Y}_t|^\alpha])^{1/2}\},\\
		&\hat{\mathbb{E}}[\sup_{t\in[0,T]}|\hat{A}_t|^\alpha]\leq C\{\hat{\mathbb{E}}[\sup_{t\in[0,T]}|\hat{Y}_t|^\alpha]+\hat{\mathbb{E}}[(\int_0^T |\hat{Z}_t|^2dt)^{\frac{\alpha}{2}}]\}.
		\end{align*}
The proof is complete.
\end{proof}	

Now, we prove our main result.

\begin{proof}[Proof of Theorem \ref {main}]
	
		First we prove the uniqueness. Suppose that $(Y^i,Z^i,A^i)$, $i=1,2$ are solutions of the reflected $G$-BSDE with data $(\xi,f,,L,U)$. Proposition \ref{lem0} yields that $Y^1\equiv Y^2$. Applying $G$-It\^{o}'s formula to $(Y_t^1-Y_t^2)^2\equiv0$ and taking expectation (we may refer to Equation \eqref{dr}), we get
		\begin{displaymath}
		\hat{\mathbb{E}}[(\int_0^T |Z_s^1-Z_s^2|^2d\langle B\rangle_s)^{\alpha/2}]=0.
		\end{displaymath}
		It follows that $Z^1\equiv Z^2$. Then it is easy to check $A^1\equiv A^2$.
		
		Now we are in a position to show the existence. By Lemma \ref{Conv-Y-Z-A}, there exist $Y\in S_G^\alpha(0,T)$, $Z\in H_G^\alpha(0,T)$ and a finite variation process $A\in S_G^\alpha(0,T)$ such that
		\begin{displaymath}
		\hat{\mathbb{E}}[\sup_{t\in[0,T]}|Y_t^n-Y_t|^\alpha]\rightarrow 0,\ \hat{\mathbb{E}}[(\int_0^T |Z_t^n-Z_t|^2ds)^{\frac{\alpha}{2}}]\rightarrow 0,\ \hat{\mathbb{E}}[\sup_{t\in[0,T]}|A_t^n-A_t|^\alpha]\rightarrow 0, \textrm{ as }n\rightarrow\infty,
		\end{displaymath}
		where $A_t^n=A_t^{n,+}-K_t^n-A_t^{n,-}$. By Lemma \ref{Conv-U-L} , we derive that $L_t\leq Y_t\leq U_t$, for any $t\in[0,T]$. It remains to show that $A$ satisfies the approximate Skorohod condition with order $\alpha$. We claim that $\{A^{n,+}\}_{n\in\mathbb{N}}$, $\{A^{n,-}\}_{n\in\mathbb{N}}$ and $\{K^{n}\}_{n\in\mathbb{N}}$ are  the approximation sequences. It is sufficent to prove that
        \begin{displaymath}
        	\lim_{n\rightarrow\infty}\hat{\mathbb{E}}[|\int_0^T (Y_s-L_s)dA_s^{n,+}|^{\alpha/2}]=0.
        \end{displaymath}
        In fact, it is easy to check that
        \begin{displaymath}
        	\int_0^T (Y_s-L_s)dA_s^{n,+}=\int_0^T (Y_s-Y_s^n)dA_s^{n,+}+\int_0^T (Y_s^n-L_s)n(Y_s^n-L_s)^-ds\leq \sup_{t\in[0,T]}|Y_s-Y_s^n|A_T^{n,+}.
        \end{displaymath}
    It follows that
\begin{displaymath}
	\hat{\mathbb{E}}[|\int_0^T(Y_s-L_s)dA_s^{n,+}|^{\alpha/2}]\leq (\hat{\mathbb{E}}[\sup_{t\in[0,T]}|Y_t-Y_t^n|^\alpha])^{1/2}(\hat{\mathbb{E}}[|A_T^{n,+}|])^{1/2}.
\end{displaymath}
Hence, the claim holds.
 The proof is complete.
 \end{proof}

\begin{remark}
		The analysis for the penalization method above can also be used for the single obstacle case, which will extend the results in \cite{LPSH} and \cite{lp} to a more general setting. More precisely, suppose   that  $L\in S_G^\beta(0,T)$ is bounded from above by some generalized $G$-It\^{o} process $I$ satisfying (A3'). Then the reflected $G$-BSDE with a lower obstacle whose parameters are given by $(\xi,f,g,L)$ admits a unique solution,  where $(f,g)$ satisfies (A1'), (A2) and $\xi\in L^\beta_G(\Omega_T)$ such that $\xi\geq L_T$. The reflected $G$-BSDE with an upper obstacle whose parameters are given by $(\xi,f,g,U)$ admits a unique solution, where $(f,g,U)$ satisfies (A1)-(A3) and $\xi\in L^\beta_G(\Omega_T)$ with $\xi\leq U_T$.  
\end{remark}

By the construction via penalization, we obtain the following comparison theorem for doubly reflected $G$-BSDEs.

\begin{theorem}
		Let $(\xi^i,f^i,L^i,U^i)$ be two sets of data satisfying (A1)-(A4), $i=1,2$. We furthermore assume the following:
		\begin{description}
			\item[(i)] $\xi^1\leq \xi^2$, $q.s.$;
			\item[(ii)] $f^1(t,y,z)\leq f^2(t,y,z)$, $\forall (y,z)\in\mathbb{R}^2$;
			\item[(iii)] $L_t^1\leq L^2_t$, $U_t^1\leq U_t^2$, $0\leq t\leq T$, $q.s.$
		\end{description}
		Let $(Y^i,Z^i,A^i)$ be the solutions of the doubly reflected $G$-BSDE with data $(\xi^i,f^i,L^i,U^i)$, $i=1,2$, respectively. Then
		\begin{displaymath}
		Y_t^1\leq Y^2_t, \quad 0\leq t\leq T  \quad q.s.
		\end{displaymath}
\end{theorem}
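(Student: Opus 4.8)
The plan is to reduce the statement to the comparison theorem for $G$-BSDEs by comparing the penalized approximations term by term and then passing to the limit. For $i=1,2$ and each $n$, let $(Y^{n,i},Z^{n,i},K^{n,i})$ denote the solution of the penalized $G$-BSDE \eqref{equ1} associated with the data $(\xi^i,f^i,L^i,U^i)$. Writing each penalized equation in the standard form $Y_t=\xi+\int_t^T\tilde f(s,Y_s,Z_s)\,ds-\int_t^T Z_s\,dB_s-(K_T-K_t)$, the effective generators are
\[
\tilde f^{n,i}(s,y,z)=f^i(s,y,z)+n(y-L^i_s)^--n(y-U^i_s)^+.
\]
Since $f^i$ is Lipschitz by (A2) and the two penalization terms are Lipschitz in $y$, and since the obstacles lie in $S_G^\beta(0,T)\subset M_G^\beta(0,T)$ by (A3), each $\tilde f^{n,i}$ inherits (H1) and (H2); hence Theorem \ref{the1.5} is applicable to the pair of penalized equations for every fixed $n$ (with vanishing $V$-components, as $g=0$).

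The key step I would carry out is the verification that the effective generators are ordered, namely $\tilde f^{n,1}(s,y,z)\le\tilde f^{n,2}(s,y,z)$ for all $(y,z)$. This rests on three monotonicity facts combined with hypotheses (i)--(iii). Assumption (ii) gives $f^1\le f^2$. The map $a\mapsto(y-a)^-=(a-y)^+$ is nondecreasing in $a$, so $L^1_s\le L^2_s$ yields $n(y-L^1_s)^-\le n(y-L^2_s)^-$. The map $a\mapsto(y-a)^+$ is nonincreasing in $a$, so $-(y-a)^+$ is nondecreasing in $a$, and $U^1_s\le U^2_s$ yields $-n(y-U^1_s)^+\le-n(y-U^2_s)^+$. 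Summing the three inequalities gives the required ordering of generators. Together with $\xi^1\le\xi^2$ from (i), Theorem \ref{the1.5} then produces $Y^{n,1}_t\le Y^{n,2}_t$ for every $t\in[0,T]$ and every $n$, q.s.

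Finally I would pass to the limit. By Lemma \ref{Conv-Y-Z-A} and the construction in the proof of Theorem \ref{main}, $\hat{\mathbb{E}}[\sup_{t\in[0,T]}|Y^{n,i}_t-Y^i_t|^\alpha]\to 0$ as $n\to\infty$, so $\sup_{t\in[0,T]}|Y^{n,i}_t-Y^i_t|\to 0$ in capacity. Extracting a common subsequence along which both convergences hold q.s., the pointwise inequalities $Y^{n,1}_t\le Y^{n,2}_t$ are preserved in the limit, giving $Y^1_t\le Y^2_t$ q.s.

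The argument is a short reduction to the $G$-BSDE comparison theorem, so no single step is a genuine obstacle. The only point demanding care is the sign bookkeeping in the ordering of the penalization terms: one must check that \emph{raising both obstacles} pushes the penalized generator upward through the $(\cdot)^-$ term (nondecreasing in the lower obstacle) and through the $-(\cdot)^+$ term (nondecreasing in the upper obstacle as well, once the sign is accounted for). I would also double-check that the penalized data inherit (A1)--(A4) so that Theorem \ref{the1.5} is legitimately applicable for each fixed $n$.
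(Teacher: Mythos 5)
Your proposal is correct and follows exactly the paper's own argument: compare the penalized $G$-BSDEs for the two data sets via Theorem \ref{the1.5} and pass to the limit $n\to\infty$ using the convergence from the penalization construction (justified by uniqueness in Theorem \ref{main}). Your sign bookkeeping for the ordering of the penalization terms and the verification of (H1)--(H2) are correct details that the paper leaves implicit.
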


\begin{proof}
	For $i=1,2$, consider the following $G$-BSDEs parameterized by $n=1,2,\cdots$,
	\begin{align*}
	Y_t^{i,n}=&\xi^{i,n}+\int_t^T f^i(s,Y_s^{i,n},Z_s^{i,n})ds-n\int_t^T (Y^{i,n}_s-U_s^i)^+ds+\int_t^T n(Y_s^{i,n}-L_s^i)^-ds\\ &-\int_t^T Z_s^{i,n}dB_s-(K_T^{i,n}-K_t^{i,n}).
	\end{align*}
	By Theorem \ref{the1.5}, for any $t\in[0,T]$ and $n=1,2,\cdots$, we have  $Y^{1,n}_t\leq Y^{2,n}_t$. Letting $n$ go to infinity, we get the desired result.
\end{proof}

\end{document}